\theoremstyle{plain}
\newtheorem{thm}{Theorem}[section]
\newtheorem{lem}[thm]{Lemma}
\newtheorem{cor}[thm]{Corollary}
\newtheorem{prop}[thm]{Proposition}
\theoremstyle{remark}
\newtheorem{rem}[thm]{Remark}
\newtheorem{exmp}[thm]{Example}
\theoremstyle{definition}
\newtheorem{defn}[thm]{Definition}
\newtheorem{notation}[thm]{Notation}
\newtheorem{qstn}[thm]{Question}
\newcommand{\magma}{{\sc Magma}}
\title{Arithmetic dynamics on smooth cubic surfaces}
\author{Solomon Vishkautsan}
\address{Department of Mathematics,
Ben-Gurion University of the Negev,
P.O.B. 653 Beer-Sheva
8410501 Israel}
\email{wishcow@gmail.com}
\date{October 2013}
\begin{document}
\begin{abstract} 
We study dynamical systems induced by birational automorphisms on smooth cubic surfaces defined over a number field $K$. In particular we are interested in the product of non-commuting birational Geiser involutions of the cubic surface. We present results describing the sets of $K$ and $\bar{K}$-periodic points of the system, and give a necessary and sufficient condition for a dynamical local-global property called strong residual periodicity. Finally, we give a dynamical result relating to the Mordell--Weil problem on cubic surfaces.
\end{abstract}

\maketitle


\section{Introduction}

In this article we study arithmetic dynamics on smooth cubic surfaces over a number field $K$. The setup is quite simple: Take $X/K$ a smooth cubic surface over a number field $K$, and $f$ a birational automorphism of $X$, also defined over $K$. A dynamical system is induced by applying iterations of $f$ to points in $X(K)$ or $X(\bar{K})$ (where $\bar{K}$ is the algebraic closure of $K$). Such a dynamical system is an example of an \emph{arithmetic-geometric dynamical system} (described formally in Section \ref{section:dynamical-systems}). We are interested in two questions: What can be said about the  $K$- and $\bar{K}$-periodic points of $f$? What can be said about the interplay between global dynamics over $K$ and local dynamics when the system is reduced modulo $p$, for all but finitely many primes $p$ in $K$'s ring of integers?

In particular, we focus on dynamical systems induced by a simple type of birational automorphism on smooth cubic surfaces, defined by taking the composition of two Geiser involutions of the cubic surface (see Section \ref{section:cubic-surfaces}). Such automorphisms are examples of \emph{Halphen twists} (cf.\ Brown and Ryder\cite{article:brown-ryder2010}, Blanc and Cantat \cite{article:blanc-cantat2013}). 
By a theorem of Manin (\cite[Example 39.8.4]{book:manin1986}), the composition of two Geiser involutions is of infinite order in the group of birational automorphisms of the cubic surface, and has the nice property of preserving an elliptic fibration of the cubic surface (by \emph{preserve} we mean that every fiber is mapped to itself under the birational automorphism). 
Even for this simple type of birational automorphisms, the dynamical properties are not entirely trivial and deserve to be studied carefully.

A complication in studying the dynamics of a birational map $\varphi$ of an algebraic variety $X$ is the \emph{locus of indeterminacy} $\mathcal{Z}(\varphi)$, the set of points where the map $\varphi$ is not defined. Even worse is the fact that the set of points on the variety whose iterations under $\varphi$ land in $\mathcal{Z}(\varphi)$, which we denote by $\mathcal{Z}_\infty(\varphi)$, can \textit{a priori} be the set of all rational points of $X$ defined over $\bar{K}$. A recent result by Amerik \cite[Corollary 9] {article:amerik2011} states that this in fact cannot happen, but does not guarantee any periodic points lying outside of this set. We also need to define what we mean by periodic points in this setup, since for example for a birational involution $\varphi$ of the projective plane, the image of a point can be undefined for the first iteration of $\varphi$, but fixed under the second iteration. We do not wish to consider points of this type to be periodic, so in this article we will only consider a point to be periodic if it lies outside of $\mathcal{Z}_\infty(\varphi)$.

An arithmetic-geometric dynamical system $D$ (such as the one induced by a birational automorphism of a smooth cubic surface) can be reduced modulo $p$ for all but finitely many primes $p$, inducing \emph{residual dynamical systems} $D_p$ (see Section \ref{section:dynamical-systems}). In some systems, an interesting local-global behavior occurs when the system $D$ has no periodic points defined over $K$, but there exist periodic points of bounded period modulo all but finitely many primes $p$, where the bound on the periods is independent of $p$. We consider the more general case, when there exist periodic points of bounded period modulo all but finitely many primes $p$ that are not reductions modulo $p$ of any periodic points over $K$, with the bound on the periods independent of $p$. This property was first described  by Bandman, Grunewald and Kunyavski{\u\i} \cite[Section 6]{article:bandman-grunewald-kunyavskii2010}, and is called \emph{strong residual periodicity}. In the above-mentioned article one can find motivating examples. 

Our results in this article are as follows: Let $f$ denote a birational automorphism defined by taking the composition of two Geiser involutions on a smooth cubic surface $X$. We show that the $\bar{K}$-periodic points of $f$ (lying outside of $\mathcal{Z}_\infty(f)$) are Zariski-dense in $X(\bar{K})$ (Corollary \ref{cor:main-corollary-A}). The set of $K$-periodic points is contained in the union of finitely many fibers of the elliptic fibration preserved by $f$ (Corollary \ref{cor:cubic-surface-finite-union-of-fibers-contains-periodic}), and the number of these fibers is bounded by a number depending only on the degree of the extension $K/\mathbb{Q}$. We further show that if $K=\mathbb{Q}$ then the period of $\mathbb{Q}$-periodic points is bounded by $12$, and cannot equal $11$ (Theorem \ref{thm:main-theorem-A}). The fibers containing all periodic points can be found using a sequence of recursively defined polynomials that relate to division polynomials of elliptic curves. We define and use these polynomials to study local-global dynamics of the system and provide a necessary and sufficient condition for strong residual periodicity (Theorem \ref{thm:main-theorem-B}). Finally, we provide a result relating to the Mordell--Weil problem on cubic surfaces (see Section \ref{section:mordell-weil}): We prove that under mild conditions, the set of periodic points of $f$ is finitely generated by tangents and secants (Theorem \ref{thm:cubic-finitely-generated}). We also call the reader's attention to the very useful Lemma \ref{lem:group-translation}, which proves that group translations are never strongly residually periodic. 

Let us briefly describe the structure of the article: In Sections 2-4 we provide the notations and preliminaries required to prove our results. In Section 5 we describe the dynamics of a product of Geiser involutions. In Section 6 we present a method for counting periodic fibers of such birational automorphisms. In Section 7 we prove a necessary and sufficient condition for strong residual periodicity of a product of Geiser involutions. In Section 8 we discuss the Mordell--Weil problem on cubic surfaces. In Section 9 we provide examples illustrating the various results of the article. 

Acknowledgments: This article contains some of the results from the author's PhD thesis under the joint advisorship of Tatiana Bandman and Boris Kunyavski{\u\i} of Bar-Ilan University. Their tremendous help and advice during the writing of the dissertation and this article are greatly appreciated. The author also thanks (in chronological order) Michael Friedman, Igor Dolgachev and Serge Cantat for fruitful correspondence and discussions relating to this article and its results. The author's research was supported by the Israel Science Foundation, grants 657/09 and 1207/12, and by the Skirball Foundation via the Center for Advanced Studies in Mathematics at Ben-Gurion University of the Negev. The author gratefully thanks the referee for his helpful comments and corrections.

\section{Notations}

\begin{notation}\mbox{}
\begin{itemize}
\item $K$ is a number field, and $\bar{K}$ its algebraic closure.
\item $\mathcal{O}_K$ is the ring of integers of $K$.
\item $p$ is a prime ideal in $\mathcal{O}_K$.
\item $\mathcal{O}_p$ is the localization of $\mathcal{O}_K$ at the prime $p$.
\item $\mathfrak{m}_p$ is the maximal ideal in $\mathcal{O}_p$.
\item $\kappa_p$ is the residue field of the prime $p$ (i.e.\ $\mathcal{O}_p/\mathfrak{m}_p$).
\item $\mathcal{L}(x,y)$ is the projective line going through two distinct points $x,y\in\mathbb{P}^3$.
\item $\mathcal{P}(x,y,z)$ is the projective plane through noncollinear $x,y,z\in\mathbb{P}^3$.
\item $T_x(S)$ is the tangent plane at $x$ for a smooth projective surface $S\subseteq\mathbb{P}^3$.
\end{itemize}
\end{notation}

\begin{defn} \label{def:indeterminacy}
Given a rational map $\varphi:X\rightarrow{Y}$ between quasiprojective varieties $X,Y$, the \emph{domain} of $\varphi$ is the largest open subset of $X$ for which the restriction of $\varphi$ is a morphism. The complement of the domain is called the \emph{locus of indeterminacy} (or \emph{indeterminacy set}), and is denoted by $\mathcal{Z}(\varphi)$.
\end{defn}

\begin{notation} \label{def:extended-indeterminacy-set}
Let $\varphi$ be a dominant rational self-map $\varphi:X\rightarrow{X}$ of a projective variety $X\subset\mathbb{P}^N_k$.
For an integer $n\geq{1}$ we denote 
$$\mathcal{Z}_n(\varphi) = \bigcup_{i={0}}^{n-1}\varphi^{-i}(\mathcal{Z}(\varphi)).$$
We remark that $\mathcal{Z}_n(\varphi) \neq \mathcal{Z}(\varphi^n)$ (e.g., $\mathcal{Z}_n(\varphi)$ may be an infinite set for a rational map $\varphi$ from a smooth projective surface, but $\mathcal{Z}(\varphi^n)$ is finite, since the locus of indeterminacy of such a map has codimension $\geq{2}$).
We also denote 
$$\mathcal{Z}_\infty(\varphi) = \bigcup_{n=1}^{\infty}\mathcal{Z}_n(\varphi) = \bigcup_{i=0}^{\infty}\varphi^{-i}(\mathcal{Z}(\varphi)).$$ 
Thus, $\mathcal{Z}_\infty(\varphi)$ is the set of all points whose orbit intersects the locus of indeterminacy $\mathcal{Z}(\varphi)$ (the Zariski-closure of this set is called the \emph{extended indeterminacy set}, cf.\ Diller \cite[Definition 2.1]{article:diller1996}).
\end{notation}

\section{Preliminaries on arithmetic-geometric dynamical systems} \label{section:dynamical-systems}

In this section, we recall the definitions and properties of arithmetic dynamical systems. We follow Silverman \cite[Section 1]{article:silverman2008}, Hutz \cite[Section 2]{article:hutz2009} and Bandman, Grunewald and Kunyavski{\u\i} \cite[Section 6]{article:bandman-grunewald-kunyavskii2010} unless otherwise stated.

\begin{defn}
Let $K$ be a number field. A triple $D=(X, \varphi, F)$ is called an \emph{arithmetic-geometric dynamical system} over $K$ (or \emph{$K$-dynamical system} or \emph{AG dynamical system}) if:
\begin{itemize}
\item $X$ is an algebraic $K$-variety;
\item $\varphi:X\rightarrow{X}$ is a dominant $K$-endomorphism (or a dominant $K$-rational self-map; note that in this case we allow the function in the dynamical system to be partially defined);
\item $F\subset{X(K)}$ is a subset of rational points which we call the \emph{forbidden set} of the dynamical system $D$ (the forbidden set will include points the dynamical behavior of which we wish to ignore, cf.\ Remark \ref{rem:forbidden-set} below). 
\end{itemize} 
\end{defn}

The dynamics are induced by the components of the system $D$: we study iterations $\varphi^n(x)$ for points $x\in{X(\bar{K})}$. We are particularly interested in \emph{periodic points}, i.e.\ points $x\in{X(\bar{K})}\setminus{\mathcal{Z}_\infty(\varphi)}$ such that $\varphi^n(x)=x$ for some positive integer $n$. The minimal such $n$ is called the \emph{exact period} of $x$.

\begin{defn}
Let $\mathcal{O}_K$ be the ring of integers of a number field $K$. A triple $\mathcal{D}=(\mathcal{X}, \Phi,\mathcal{F})$ is called an \emph{$\mathcal{O}_K$-dynamical system} if:
\begin{itemize}
\item $\mathcal{X}$ is an $\mathcal{O}_K$-scheme of finite type;
\item $\Phi:\mathcal{X}\rightarrow{\mathcal{X}}$ is a dominant $\mathcal{O}_K$-endomorphism (or a dominant $\mathcal{O}_K$-rational self-map);
\item $\mathcal{F}\subset{\mathcal{X}(\mathcal{O}_K)}$ is the \emph{forbidden set} of the dynamical system $\mathcal{D}$ (cf.\ Remark \ref{rem:forbidden-set} below).
\end{itemize}
\end{defn}

\begin{defn}
We say that an $\mathcal{O}_K$-dynamical system $\mathcal{D} = (\mathcal{X}, \Phi, \mathcal{F})$ is an
\emph{integral model} of the $K$-dynamical system $D=(X, \varphi, F)$ if:
\begin{itemize}
\item $\mathcal{X}\times_{\mathcal{O}_K}K=X$ (this means $X$ is the generic fiber of $\mathcal{X}$); 
\item the restriction of $\Phi$ to the generic fiber of $\mathcal{X}$ coincides with $\varphi$;
\item  $\rho(\mathcal{F})=F,$ where $\rho\colon\mathcal{X}(\mathcal{O}_K)\to X(K)$ is the restriction to the generic fiber.
\end{itemize}
\end{defn}

\begin{defn}
Consider a $K$-AG dynamical system $D=(X, \varphi, F)$ and an integral
model $\mathcal{D} = (\mathcal{X}, \Phi, \mathcal{F}),$ as described in the previous section. Let $p$ be a prime of $\mathcal{O}_K$.	 Then:
\begin{itemize}
\item $X_p$, the special fiber of $\mathcal{X}$ at $p$, is called the \emph{reduction of $X$ modulo $p$}. We have $X_p=\mathcal{X}\times_{\mathcal{O}_K}\kappa_p$;
\item let $\rho_p\colon\mathcal{X} \to X_p$ be the reduction map (restriction to the special fiber). The image of $a\in\mathcal{X}(\mathcal{O}_K)$ under $\rho_p$ is the \emph{reduction modulo $p$} of $a$;
\item $\varphi_p\colon X_p\to X_p$, the restriction of $\Phi$ to the special fiber over $p$, is an endomorphism (or rational self-map) of $\kappa_{p}$-schemes. This is the \emph{reduction of $\varphi$ modulo $p$};
\item $F_p=\rho_p(\mathcal{F})\subset X_p(\kappa_p)$ is the reduction of the forbidden set $\mathcal{F}.$
\end{itemize}
We call the triple $D_p = (X_p, \varphi_p, F_p)$ the \emph{residual system} of $D$ modulo $p$.
\end{defn}


%

\begin{defn} Let $D=(X, \varphi, F)$ be a $K$-AG dynamical system such that $X$ is a smooth and proper $K$-variety and $\varphi$ is a dominant endomorphism. Let $\mathcal{D} = (\mathcal{X}, \Phi, \mathcal{F})$ be an integral model. Then for a place $p$ of $K$ we say $X$ has \emph{good reduction at a prime $p$} if $X_p$ is a smooth and proper scheme; we say that $\varphi$ has \emph{good reduction at a prime $p$} if $\varphi_p$ extends to a dominant $\kappa_p$-morphism. If both $X$ and $\varphi$ have good reductions modulo $p$, we say that $D$ has \emph{good reduction} modulo $p$.
\end{defn}


Let us recall some important facts about good reduction: A smooth projective variety $X$ defined over a number field $K$ has good reduction at all but finitely many primes of $\mathcal{O}_K$ (see Hindry and Silverman \cite[Proposition~A.9.1.6]{book:hindry-silverman2000}). A similar statement is true for a morphism of a projective variety defined over a number field $K$ (see Hutz \cite[Proposition~1]{article:hutz2012}). For a prime $p$ of good reduction of a dynamical system $D$, reduction modulo $p$ commutes nicely with a morphism, i.e.\ $\rho_p(\varphi^n(a))=\varphi_p^n(\rho_p(a))$ (see Hutz \cite[Theorem~7]{article:hutz2009}). Therefore we can discuss the reductions of orbits, etc.

\begin{defn} \label{def:residual-periodicity}
Let $D=(X,\varphi,F)$ be a $K$-AG dynamical system, and let $D_p=(X_p, \varphi_p, F_p)$ be the reduction of $D$ modulo $p$ with respect to some integral model.
Let $a\in X_p(\kappa_p)\setminus F_p $  be a periodic point of $\varphi_p.$ Let
$\ell_p(\varphi, a)$ be the orbit size of $a.$ Set
$\underline{\ell}_p:=\min\{\ell_p(\varphi,a)\}$ where the minimum is taken over all $a$. If there are no periodic points in $X_p(\kappa_p)\setminus F_p, $ we
set $\underline{\ell}_p=\infty.$ Let $M$ denote the collection of primes $p$
such that $\underline{\ell}_p=\infty.$  Let $N=\{\underline{\ell}_p\}_{p\not\in M}.$ We say that a $K$-dynamical system $D=(X,\varphi, F)$ or an $\mathcal{O}_K$-dynamical system $\mathcal{D} = (\mathcal{X}, \Phi, \mathcal{F})$ is
\emph{residually aperiodic} if the set $M$ is infinite, \emph{residually periodic} if $M$ is finite, and \emph{strongly residually
periodic} (SRP) if the sets $M$ and $N$ are both finite. We denote by \emph{SRP(n)} a dynamical system that is strongly residually periodic with minimal periods bounded by an integer $n$ for all but finitely many primes.
\end{defn}

\begin{rem} \label{rem:forbidden-set} \mbox{}
Usually we will take the forbidden set $F\subset{X(K)}$ to be the set of all periodic points in $X(K)$ (or the Zariski-closure of this set), so that SRP describes the situation where we have bounded residual periods that cannot be explained by periodic points in $X(K)$. In case $\varphi$ is rational, we include $\mathcal{Z}_\infty$ in $F$ so that we can exclude points with bad dynamics.
\end{rem}

Given a dynamical system $D=(X/K, \varphi, \emptyset)$ (for now we ignore the forbidden set), one can ask about the orbit size of a point $a\in{X(K)}$ when reduced modulo $p$. If $a\in{X(\mathcal{O}_K)}$ is periodic of exact period $n$, then the orbit size of $\rho_p(a)$ divides $n$ (cf.\ Hutz \cite[Theorem~1]{article:hutz2009}). If $a$ is of an infinite orbit, we would expect that when reducing the point modulo primes $p$, the reduced point $\rho_p(a)$ will have periods that grow together with the cardinality of $\kappa_p$. This is a direct corollary from a theorem of Silverman (see \cite[Theorem~2]{article:silverman2008}):
%
%
%
%


\begin{cor}[Corollary to Silverman's Theorem] \label{cor:silverman}
Let $D=(X,\varphi, F)$ be an AG dynamical system, and let $a \in X(K)\setminus{\mathcal{Z}_\infty}$ be a point of an infinite orbit. Then the residual periods of $a$ are unbounded over the primes in $\mathcal{O}_K$.
\end{cor}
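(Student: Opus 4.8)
The plan is to deduce this from Silverman's theorem (\cite[Theorem~2]{article:silverman2008}), which bounds the frequency with which the iterates of a point of infinite orbit can collide modulo a prime. First I would recall the precise statement of Silverman's theorem in the form needed: for a morphism $\varphi$ of a projective variety over a number field, a point $a$ of infinite forward orbit, and a prime $p$ of good reduction, the quantity measuring how often $\varphi^i(a) \equiv \varphi^j(a) \pmod{p}$ is controlled; in particular the orbit length of $\rho_p(a)$ cannot stay bounded as $p$ varies. Concretely, if the residual periods were bounded by some integer $N$ for infinitely many primes $p$, then for each such $p$ we would have $\rho_p(\varphi^{n_p}(a)) = \rho_p(a)$ for some $n_p \le N$, hence $\rho_p(\varphi^{m}(a)) = \rho_p(a)$ for $m = \mathrm{lcm}(1,\dots,N)$, a fixed integer independent of $p$.

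Next I would derive the contradiction: the point $b := \varphi^m(a)$ is distinct from $a$ in $X(K)$ (since $a$ has infinite orbit, in particular $a$ is not periodic, so $\varphi^m(a) \neq a$), yet $\rho_p(a) = \rho_p(b)$ for infinitely many primes $p$. But two distinct $K$-rational points of a (quasi-)projective variety can be congruent modulo only finitely many primes — this is the standard fact that the reduction maps eventually separate any fixed finite set of rational points (one clears denominators and observes that a nonzero element of $K$ lies in only finitely many primes). This contradicts the infinitude of the set of primes obtained above, completing the argument. A minor point to handle at the outset: one should restrict to the cofinite set of primes $p$ of good reduction of the integral model $\mathcal{D}$, so that $\rho_p$ commutes with iteration, $\rho_p(\varphi^n(a)) = \varphi_p^n(\rho_p(a))$; discarding the remaining finitely many primes does not affect the conclusion.

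The only genuine subtlety — the "hard part," though it is really just bookkeeping — is that the corollary is stated for a possibly \emph{rational} self-map $\varphi$, whereas Silverman's theorem is for morphisms, and the hypothesis $a \in X(K) \setminus \mathcal{Z}_\infty$ is exactly what is needed to make sense of the orbit: since $a \notin \mathcal{Z}_\infty(\varphi)$, no iterate $\varphi^i(a)$ meets the indeterminacy locus, so the forward orbit $\{\varphi^i(a)\}_{i\ge 0}$ is a well-defined infinite sequence of points of $X(K)$, and we may as well replace $X$ by an open subvariety on which all the relevant iterates are defined (or pass to a resolution) and invoke Silverman's theorem there; alternatively one applies the version of Silverman's result that already allows for indeterminacy. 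Either way the above two-step reduction — bounded residual period forces a fixed iterate to be residually equal to $a$ at infinitely many primes, which forces two distinct rational points to coincide modulo infinitely many primes — goes through unchanged.
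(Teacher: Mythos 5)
Your proposed argument is not what the paper intends, and it has a genuine gap. The paper does not spell out a proof at all — it simply cites Silverman's Theorem~2 as directly implying the statement — so the interesting question is whether your independent, elementary argument actually works. It does not, in the generality in which the corollary is stated.

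The gap is in the step where you pass from ``residual period $\le N$'' to ``$\rho_p(\varphi^{n_p}(a)) = \rho_p(a)$ for some $n_p \le N$.'' That inference presumes that $\rho_p(a)$ is itself a \emph{periodic} point of $\varphi_p$. But an AG dynamical system allows $\varphi$ to be a non-invertible dominant endomorphism, and then $\rho_p(a)$ is in general only \emph{preperiodic}: the orbit enters a cycle of length $\ell_p$ after a preperiodic tail of some length $i_p$, and it is the eventual cycle length $\ell_p$ that the phrase ``residual period'' refers to (this is what Silverman's paper calls the period, and what Definition~\ref{def:residual-periodicity} measures for the honest periodic points it does contain). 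The tail length $i_p$ is not a priori bounded; it can grow with $p$. So from $\ell_p \le N$ you may only conclude $\rho_p(\varphi^{i_p + n_p}(a)) = \rho_p(\varphi^{i_p}(a))$ for some $n_p \le N$, and because $i_p$ varies with $p$, there is no fixed pair of iterates that you can force to agree modulo infinitely many primes. A concrete example: $\varphi(x)=x^2$ on $\mathbb{P}^1$ over $\mathbb{Q}$ and $a=2$; the reduction $2 \bmod p$ lies on the cycle only when the multiplicative order of $2$ mod $p$ is odd, and otherwise there is a genuine tail whose length is the $2$-adic valuation of that order.

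What your elementary argument does prove (after replacing ``$\rho_p(a)$ is periodic'' by a pigeonhole over pairs $0 \le i < j \le M$) is that the \emph{total} orbit length $m_p = i_p + \ell_p$ of $\rho_p(a)$ is unbounded over primes; this is essentially Silverman's Theorem~1, and it is strictly weaker than the claim about $\ell_p$ alone, which is the content of Theorem~2 and requires genuinely more work. So your proposal silently substitutes the easy half of Silverman's result for the harder half that the corollary actually needs. The one situation in which your argument is sound as written is when $\varphi_p$ is a bijection on the relevant finite set (e.g.\ when $\varphi$ is an automorphism, or a birational self-map away from the indeterminacy), since then every orbit modulo $p$ is purely periodic and the tail issue disappears. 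That covers the case of primary interest in the paper — products of Geiser involutions — but the corollary is stated for general AG dynamical systems, so the argument as given is incomplete. To fix it in full generality, you should simply invoke Silverman's Theorem~2 for the cycle-length bound, as the paper does, rather than re-derive it; or else restrict the corollary's hypotheses to the invertible setting.
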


From the corollary we see that strong residual periodicity cannot be explained by one global point of infinite orbit, and in fact we can deduce from Silverman's theorem that it cannot be explained by a finite set of points of infinite orbit in $X(K)$. The corollary allows us to prove the following simple yet useful lemma:



\begin{lem} \label{lem:group-translation}
Let $G/K$ be an algebraic group over a number field $K$, and let $D$ be the dynamical system induced by the group translation $\varphi_g(x)=gx,$ for some element $g\in{G(K)}$ of infinite order. Then $D$ is not SRP.
\end{lem}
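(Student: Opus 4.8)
The plan is to reduce the lemma to Corollary \ref{cor:silverman} by exhibiting a single $K$-rational point whose forward orbit is infinite and whose residual periods coincide with the quantities $\underline{\ell}_p$ appearing in the definition of SRP; in this way all the arithmetic content is inherited directly from Silverman's theorem, and no estimates are required.

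First I would dispose of the set-up. Since $\varphi_g$ is left translation by $g$, it is an automorphism of $G$ with inverse $\varphi_{g^{-1}}$; in particular it is a morphism, so $\mathcal{Z}(\varphi_g)=\emptyset$ and hence $\mathcal{Z}_\infty(\varphi_g)=\emptyset$, and every point of $G(\bar K)$ is eligible to be periodic. Next I would observe that $\varphi_g$ has no periodic points at all: from $\varphi_g^{\,n}(x)=g^nx$ one sees that $g^nx=x$ forces $g^n=e$, which is impossible for $n\ge 1$ because $g$ has infinite order (and the order of $g$ does not depend on whether it is computed in $G(K)$ or in $G(\bar K)$). Consequently the forbidden set of $D$, which by Remark \ref{rem:forbidden-set} one takes to be the Zariski closure of the set of $K$-periodic points, is empty, and so are all the residual forbidden sets $F_p$.

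Then I would apply the corollary to the identity element $a=e\in G(K)$. Its orbit under $\varphi_g$ is $\{e,g,g^2,\dots\}$, which is infinite since $g$ has infinite order, and $a\notin\mathcal{Z}_\infty(\varphi_g)$. Corollary \ref{cor:silverman} then gives that the residual periods of $e$ are unbounded over the primes of $\mathcal{O}_K$. It remains to identify these residual periods with $\underline{\ell}_p$. For all but finitely many $p$ the reduction $G_p$ is a finite-type group scheme over the finite field $\kappa_p$ and $\varphi_p$ is left translation by $\bar g:=\rho_p(g)$; since $G_p(\kappa_p)$ is finite, $\bar g$ has a finite order $m_p$, and $\varphi_p^{\,k}(x)=\bar g^{\,k}x=x$ holds for some (equivalently, every) $x$ exactly when $m_p\mid k$. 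Thus every point of $G_p(\kappa_p)\setminus F_p=G_p(\kappa_p)$ is periodic of exact orbit size $m_p$; in particular $\rho_p(e)$ is periodic, the set $M$ is contained in the finite set of bad-reduction primes, and $\underline{\ell}_p=m_p$ equals the residual period of $e$ at $p$. Since the latter are unbounded, $N=\{\underline{\ell}_p\}_{p\notin M}$ is infinite, so $D$ is not SRP.

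The only delicate point is the bookkeeping in the second step: one must pin down the (in the statement, unspecified) forbidden set to its natural choice and check that $\mathcal{Z}_\infty(\varphi_g)=\emptyset$, so that the residual forbidden sets $F_p$ cannot remove the periodic points exhibited in the last step. Once this is in place, the lemma is an immediate consequence of Corollary \ref{cor:silverman}, with no computation involved.
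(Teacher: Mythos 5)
Your proof is correct and follows essentially the same route as the paper's: reduce to Corollary \ref{cor:silverman} by picking a single $K$-rational point of infinite orbit, and then observe that for a group translation all points in a residual fiber share the same orbit size (the order of $\rho_p(g)$), so unboundedness of that one point's residual periods forces unboundedness of the minimal periods $\underline{\ell}_p$. The paper applies the corollary to $g$ itself rather than to $e$, and phrases the uniformity of residual orbit sizes via a list of equivalences, but the substance is identical; your version is slightly more explicit in pinning down the forbidden set and the identification $\underline{\ell}_p=m_p$, which is a welcome clarification rather than a different argument.
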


\begin{proof}
The iterations of $\varphi_g$ are very simple: $\varphi_g^n(x)=g^nx.$ We see that 
the following are equivalent:
\begin{enumerate}[(a)]
\item There exists a periodic point $x\in{G(K)}$ of exact period $n$.
\item The element $g$ is of finite order $n$.
\item The element $g$ is a periodic point of $\varphi_g$ of exact period $n$: i.e.\ $\varphi_g^n(g)=g$ (and $n\geq{1}$ is the minimal positive integer satisfying this).
\item The map $\varphi_g$ is of finite order $n$ in the automorphism group of the underlying variety of $G/K$. 
\end{enumerate}

Now, if $g$ is of infinite order in $G/K$, then by the properties above it is of infinite orbit, and we can use the corollary to Silverman's Theorem~(Corollary~\ref{cor:silverman}) to see that the $\varphi_g$-periods of $g$ modulo primes $p$ are unbounded over the primes. This means that the minimal periods are unbounded (because the equivalent conditions above are also relevant for the reduced system $D_p$ when there is good reduction), so that $\varphi_g$ is not SRP. 
\end{proof}

\section{Preliminaries on cubic surfaces}\label{section:cubic-surfaces}

In order to study arithmetic dynamics on smooth cubic surfaces, we need to recall several classical geometric properties and theorems related to them. 

We briefly recall the group structure on absolutely irreducible cubic plane curves (cf.\ Manin \cite[Chapter I, Section 1, page 7]{book:manin1986} and Silverman \cite[Chapter III, Section 2]{book:silverman2009}). Let $C$ be an absolutely irreducible cubic curve in the projective plane $\mathbb{P}^2$, defined over a field $k$. Let $C_{ns}(k)$ denote its set of nonsingular rational points over $k$. Assuming $C_{ns}(k)\neq\emptyset$, we define a binary composition law
$\circ:C_{ns}(k)\times{C_{ns}(k)}\rightarrow{C_{ns}(k)},$
by setting $x\circ{y}$ for $x\neq{y}$ to be the third point of intersection of the line $L=\mathcal{L}(x,y)$ with the curve $C$. If $x=y$ then we take $L$ to be the tangent to $C$ at $x$. 
We can then turn $C_{ns}(k)$ into a group by choosing an element $u\in{C_{ns}(k)}$ and defining $xy:=u\circ(x\circ{y})$ for $x,y\in{C_{ns}(k)}$. With this multiplication, $C_{ns}(k)$ is an abelian group with unit $u$ . If $C$ is smooth, this gives the usual composition law on elliptic curves, and we denote it by $x+y$.

%
Similarly, we can define a composition law $\circ:S(\bar{k})\times{S(\bar{k})}\rightarrow{S(\bar{k})}$ on a smooth cubic surface $S\subset\mathbb{P}^3_k$ defined over a field $k$ (cf.\ Manin \cite[Chapter I, Section 1, page 7]{book:manin1986}). This composition is only partially defined. Given two distinct points $x,y\in{S(\bar{k})}$ such that $\mathcal{L}(x,y)$ does not lie on $S$, we define $x\circ{y}$ to be the third point of the intersection of the line $\mathcal{L}(x,y)$ with $S$ (this line has three points of intersection with $S$, by Bezout's theorem). We note that this operation is commutative but not necessarily associative.

\begin{defn} \label{def:cubic-good-point}
A point $x\in{S(\bar{k})}$ on a smooth cubic surface $S/k$ is a \emph{good point} if it does not lie on the union of the lines of $S\times_k\bar{k}$ (recall that there are $27$ lines over $\bar{k}$ on a smooth cubic surface, cf.\ Shafarevich \cite[Chapter IV, Section 2.5, Theorem]{book:shafarevich1994-vol1}). An unordered pair of distinct points $x, y \in S(\bar{k})$ is called a \emph{good pair} if the line containing these points is not tangent to $S\times_k\bar{k}$ and does not intersect the union of the lines on $S\times_k\bar{k}$ in $\mathbb{P}^3$ (cf.\ Manin \cite[Chapter V, Section 33.6]{book:manin1986}).
\end{defn}

It is clear from the definition that a pair of distinct points $x,y\in{S(\bar{k})}$ on a smooth cubic surface is a good pair if and only if $x\circ{y}$ is defined, the three points $x,y$ and $x\circ{y}$ are distinct and all three of them are good.

The \emph{Geiser involution} of a smooth cubic surface $S$ through a point $x\in{S(k)}$, is a map $t_x:S\rightarrow{S}$ sending each $y\in{S(\bar{k})}$ to $x\circ{y}$, when defined (cf.\ Brown and Ryder \cite[Section 2.2]{article:brown-ryder2010} and Corti, Pukhlikov and Reid \cite[Section 2.6]{article:corti-pukhlikov-reid2000}). We define $t_x$ for absolutely irreducible cubic curves in the same way. It is clear that $t_x$ is a birational involution (it is generally not defined at $x$ itself). A theorem of Manin~\cite[Theorems 33.7, 33.8]{book:manin1986} says that the Geiser involutions together with the Bertini involutions and the projective automorphisms generate $Bir(S)$ for a minimal smooth cubic surface $S$ defined over a perfect non-closed field. Some other useful properties of the Geiser involution are that $t_x(y)=t_y(x)$ for a good pair $x,y$, and its locus of indeterminacy is $\mathcal{Z}(t_x)=\{x\}$. Also, $t_x$ is an automorphism when restricted to $S\setminus C_x$, where $C_x = T_x(S)\cap{S}$.




\begin{thm} \label{thm:txty-on-cubic-curve}
Let $C\subset\mathbb{P}^2$ be an absolutely irreducible plane cubic curve defined over a field $k$. Then:
\begin{enumerate}[(a)]
\item The product of any two Geiser involutions $t_xt_y$ is a group translation: Given the choice of a group structure on the nonsingular points on the cubic curve, then for any nonsingular point $z\in{C_{ns}(\bar{k})}$ we get $t_xt_y(z) = (y - x) + z$ (or $t_xt_y(z) = x^{-1}yz$ if the group is multiplicative.
\item For any $x,y,z\in{C}$ we have $t_xt_yt_z=t_w$, where $w=y\circ(x\circ{z})$. 
\end{enumerate}
\end{thm}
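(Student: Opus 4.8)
The plan is to derive both parts from a single lemma that re-expresses the chord operation $\circ$ in terms of the chosen group law, turning the nested compositions into affine-linear expressions. Fix a group structure on $C_{ns}(\bar{k})$ with identity $u$, so that $a+b = u\circ(a\circ b)$, and set $O' := u\circ u\in C_{ns}(\bar{k})$ (the third intersection point of the tangent line to $C$ at $u$ with $C$; note $O'=u$ exactly when $u$ is an inflection point). The key claim is that whenever $P\circ Q$ is defined on nonsingular points (including the tangent case $P=Q$) one has $P\circ Q = O' - P - Q$. To prove it, observe first that for every $R\in C_{ns}(\bar{k})$ the points $u$, $R$, $u\circ R$ are collinear, so $(u\circ R)\circ R = u$ (the line through $u\circ R$ and $R$ is the same line), and therefore
\[
(u\circ R)+R \;=\; u\circ\bigl((u\circ R)\circ R\bigr) \;=\; u\circ u \;=\; O',
\]
i.e.\ $u\circ R = O'-R$. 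Applying this with $R = P\circ Q$ and using $P+Q = u\circ(P\circ Q) = O'-(P\circ Q)$ yields $P\circ Q = O'-P-Q$.

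For part (a) I would then compute directly, using the lemma twice:
\[
t_xt_y(z) \;=\; x\circ(y\circ z) \;=\; x\circ(O'-y-z) \;=\; O' - x - (O'-y-z) \;=\; (y-x)+z .
\]
The cancellation of $O'$ reflects the fact that $t_xt_y$ cannot depend on the auxiliary choice of identity $u$; in multiplicative notation the same line reads $t_xt_y(z)=x^{-1}yz$. (One also sees immediately that $t_xt_y$ is translation by $y-x$, hence an automorphism of the smooth locus, of infinite order unless $x=y$ in the group.)

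For part (b) the computation is the same in spirit. Three applications of the lemma give $z\circ z' = O'-z-z'$, then $y\circ(z\circ z') = z+z'-y$, then
\[
t_xt_yt_z(z') \;=\; x\circ\bigl(y\circ(z\circ z')\bigr) \;=\; x\circ(z+z'-y) \;=\; O'-(x+z-y)-z',
\]
whereas $t_w(z') = w\circ z' = O'-w-z'$. These agree for all admissible $z'$ precisely when $w = x+z-y$, and indeed $y\circ(x\circ z) = y\circ(O'-x-z) = x+z-y$. Thus $t_xt_yt_z$ and $t_w$ are two birational self-maps of $C$ that agree on the dense open set of good nonsingular points, hence coincide as birational maps; for arbitrary $x,y,z\in C$ the identity is then read off at the level of birational maps (equivalently, by passing to the closure in the parameters $x,y,z$).

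The only real obstacle here is bookkeeping rather than mathematics: $\circ$ is commutative but not associative, so the iterated compositions cannot be manipulated directly, and it is exactly the lemma — which trades $\circ$ for the associative group law at the cost of the fixed constant $O'$ — that makes the expressions collapse. One must also be careful about the loci where $\circ$, and hence each $t_{(\cdot)}$, is defined, so that the displayed equalities are genuine identities of birational maps and not vacuous pointwise statements; this is where absolute irreducibility of $C$ (ensuring $C_{ns}(\bar{k})$ is Zariski-dense and carries a group law) enters.
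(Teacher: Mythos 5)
Your proof is correct, and it fills in a detail the paper leaves to a citation: the paper simply refers to Manin's \emph{Cubic Forms}, Theorem 2.1, for this result and does not reproduce an argument. The route you take is the standard one and is essentially what Manin does: translate the chord operation $\circ$ into the chosen group law via the identity $P\circ Q = O'-P-Q$ with $O'=u\circ u$, after which both (a) and (b) become two- or three-line affine computations. Your derivation of the key identity (via $u\circ R = O'-R$ and then substituting $R=P\circ Q$) is clean and correct, and the observation that the constant $O'$ cancels in $t_xt_y$ — so the translation $z\mapsto (y-x)+z$ is independent of the auxiliary base point $u$ — is exactly the right sanity check. The only place I would tighten the wording is the closing remark about ``arbitrary $x,y,z\in C$'': for the theorem as literally stated, $t_x$ is only a well-behaved birational involution when $x$ is nonsingular, and the $\circ$-law is only defined on $C_{ns}$, so the density/birational-maps argument you sketch should be stated as an equality of birational self-maps of $C$ valid for nonsingular $x,y,z$, with the general phrasing in the theorem understood in that sense; this is a matter of precision, not a gap in the mathematics.
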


\begin{proof}
See the proof of Theorem 2.1 in Manin \cite{book:manin1986}. 
\end{proof}

\begin{thm} \label{thm:manin-infinite-order}
Let $S$ be a smooth cubic surface over a perfect field $k$, and let $x,y\in{S(k)}$ be a good pair. Then the birational map $t_xt_y$ is of infinite order in $Bir(S)$.
\end{thm}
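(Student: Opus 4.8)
The plan is to reduce the statement to the curve case, Theorem \ref{thm:txty-on-cubic-curve}(a), by exploiting the elliptic fibration that $t_xt_y$ preserves, as alluded to in the introduction. First I would recall the geometric picture: for a good pair $x,y\in S(k)$, the line $\mathcal{L}(x,y)$ meets $S$ in a third good point, and more importantly the pencil of hyperplanes in $\mathbb{P}^3$ containing the line $\mathcal{L}(x,y)$ cuts out a pencil of (generically smooth) plane cubic curves on $S$. After blowing up the three intersection points of $\mathcal{L}(x,y)$ with $S$, this pencil becomes an elliptic fibration $\pi\colon \widetilde{S}\to\mathbb{P}^1$ defined over $k$. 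The key claim is that both $t_x$ and $t_y$ send each member $C_\lambda$ of this pencil to itself: indeed, if $z\in C_\lambda$, then the line $\mathcal{L}(x,z)$ lies in the unique plane spanned by $x$ and $C_\lambda$ (which contains $\mathcal{L}(x,y)$ since $x\in\mathcal{L}(x,y)$), hence the third intersection point $x\circ z$ again lies on $C_\lambda$. The same holds for $t_y$. Consequently $t_xt_y$ restricts to each smooth fiber $C_\lambda$, where by Theorem \ref{thm:txty-on-cubic-curve}(a) it acts as translation by the point $(y-x)$ under the induced group law on $C_\lambda$.

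Having set this up, I would argue by contradiction: suppose $(t_xt_y)^n = \mathrm{id}$ in $\mathrm{Bir}(S)$ for some $n\ge 1$. Then for every smooth fiber $C_\lambda$ the restriction $(t_xt_y)^n|_{C_\lambda}$ is the identity on $C_\lambda(\bar k)$, so translation by $n(y-x)$ is trivial on $C_\lambda$, i.e.\ the point $y-x$ (the third intersection of $\mathcal{L}(x,y)$ with $S$, suitably interpreted as a point of $C_\lambda$ via the group law) is $n$-torsion on every smooth member of the pencil. I would then derive a contradiction from this: the class of the point $y-x$ on the fibers varies algebraically in $\lambda$, so if it is $n$-torsion on one fiber it is $n$-torsion on all of them, but as $\lambda$ varies the $j$-invariant of $C_\lambda$ is non-constant (the pencil on a smooth cubic surface is not isotrivial — it has singular fibers), and a section of infinite order on a non-isotrivial elliptic surface cannot be everywhere torsion; more concretely, one can specialize to a fiber where the relevant point is visibly non-torsion, e.g.\ by a height or reduction argument, or simply invoke that the bound on torsion of $C_\lambda$ depends on $j(C_\lambda)$ in a way incompatible with a uniform $n$. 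Alternatively, and perhaps more cleanly, one degenerates $C_\lambda$ to a nodal cubic in the pencil: on the smooth locus of a nodal cubic the group law is $\mathbb{G}_m$, and a generic choice of $x,y$ makes $y-x$ a non-torsion element of $\bar k^\times$, contradicting $n$-torsion.

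The main obstacle, and the step I would spend the most care on, is the transition from "the map is torsion on every fiber" to a genuine contradiction: one must ensure the pencil through $\mathcal{L}(x,y)$ is not isotrivial (equivalently, has at least one singular fiber — which is automatic since a cubic surface pencil always degenerates), and one must track the point $y-x$ consistently across fibers, including identifying its image in a degenerate fiber where the $\mathbb{G}_m$ (or $\mathbb{G}_a$) structure makes non-torsion transparent. There is also a bookkeeping subtlety about the choice of zero section for the group law on each fiber — but since translation by $y-x$ is independent of that choice (the difference $y-x$ is canonical), this does not genuinely interfere. Once the non-isotriviality is in hand, the contradiction is immediate, and since everything is defined over $k$ and the argument only uses $\bar k$-points, no perfectness hypothesis beyond what is stated is actually needed (it is presumably inherited from the ambient reference in Manin). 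I would therefore present the proof as: (1) construct the pencil and the fibration; (2) show $t_x,t_y$ preserve each fiber and invoke Theorem \ref{thm:txty-on-cubic-curve}(a); (3) specialize to a singular fiber to see $y-x$ is non-torsion there; (4) conclude $(t_xt_y)^n\ne\mathrm{id}$ for all $n\ge1$.
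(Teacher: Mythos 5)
The paper's own ``proof'' is a pointer to Manin's Example 39.8.4, so there is no internal argument to compare against; the question is whether your proposed argument actually closes. The reduction you set up is fine: pass to the pencil of plane sections through $\mathcal{L}(x,y)$, observe that $t_xt_y$ acts fiberwise as translation by the section $y$ (with $x$ as zero), and note that $(t_xt_y)^n=\mathrm{id}$ would force that section to be $n$-torsion on the generic fiber of the pencil over $k(t)$. The gap is in the step you yourself flag as the one needing the most care: every argument you offer for why this section cannot be torsion is flawed.

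Concretely: (i) The suggestion to specialize to a nodal fiber and observe that $y-x$ is ``a non-torsion element of $\bar{k}^\times$'' does not work in general — the paper's own Example \ref{ex:cubic-singular-periodic-fiber} exhibits a smooth cubic surface and a good pair with a nodal fiber on which $y$ has order $2$, so the specialized point can very well be torsion. Worse, you hedge to ``a generic choice of $x,y$,'' which does not prove the statement for an arbitrary good pair. (ii) The remark that ``the bound on torsion of $C_\lambda$ depends on $j(C_\lambda)$'' is false over a general perfect field: over $\bar{k}$ every elliptic curve has torsion of every order, so there is no such bound to exploit. (iii) Non-isotriviality of the pencil does not by itself exclude torsion sections — non-isotrivial (even rational) elliptic surfaces with nontrivial Mordell--Weil torsion certainly exist. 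What actually makes this argument go through is the theory of Mordell--Weil lattices: blowing up the three points of $\mathcal{L}(x,y)\cap S$ (which are nonsingular on every fiber because $x,y$ is a good pair, by Proposition \ref{prop:plane-section-through-L_x_y-points-on-line-are-nonsingular}) gives a relatively minimal rational elliptic surface whose fibers are all \emph{irreducible} (Proposition \ref{prop:irreducible-cubic-fiber}(c)). For such a surface the Shioda--Tate formula gives trivial lattice $\langle O,F\rangle$ and hence Mordell--Weil lattice $E_8$, which is torsion-free; therefore the section $y$ has infinite order on the generic fiber. Without invoking this (or Manin's original Picard-lattice argument), the transition from ``torsion on every fiber'' to a contradiction is not justified, and the proposal as written has a genuine hole precisely there.
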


\begin{proof}
See Manin \cite[Example 39.8.4]{book:manin1986}.
\end{proof}


We recall some facts about hyperplane sections of a smooth surface $S$ in $\mathbb{P}^3$ defined over a perfect field $k$. If $H$ is a hyperplane in $\mathbb{P}^3$, then a point $x\in{}S\cap{H}$ is singular (on $S\cap{H}$) if and only if $H=T_x$ (here $S\cap{H}$ is viewed scheme-theoretically, since the intersection may not be reduced, cf.\ Beltrametti et al.\ \cite[Chapter 3, Section 1.8]{book:beltrametti-et-al2009}). For a smooth cubic surface $S$ in $\mathbb{P}^3$ we know that any hyperplane section will be one of the following (see Reid \cite[Chapter 7, Section 1, Proposition]{book:reid1988}): 
%
	%
	%
	%
\begin{inparaenum} [\itshape a\upshape)]
\item an absolutely irreducible smooth plane cubic curve;
\item a cuspidal plane cubic;
\item a nodal plane cubic;
\item an absolutely irreducible conic and a line;
\item three distinct lines.
\end{inparaenum}
By using these properties it is easy to prove the following list of statements about hyperplane sections of cubic surfaces:

\begin{prop} \label{prop:irreducible-cubic-fiber}\label{prop:plane-section-through-L_x_y-points-on-line-are-nonsingular}
Let $S\subset\mathbb{P}^3$ be a smooth cubic surface over a perfect field $k$.
\begin{enumerate}[(a)]
\item The point $x\in{S(\bar{k})}$ is a good point if and only if the curve $C_x=T_x(S)\cap{S}$ is absolutely irreducible.
\item Let $x,y$ be distinct good points on $S$. Then $C_x$ and $C_y$ do not have any common components.
\item Let $x,y$ be a good pair on $S$. Then any plane $H\subset\mathbb{P}^3$ passing through $x,y$ intersects $S$ in an absolutely irreducible cubic curve $C$, and the three (distinct) points $x,y,z$ in $\mathcal{L}(x,y)\cap{S}$ are nonsingular on $C$.
\end{enumerate}
\end{prop}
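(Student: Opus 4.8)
The plan is to deduce all three parts from two facts recalled just above: the classification of hyperplane sections of a smooth cubic surface (each section is a smooth, cuspidal, or nodal irreducible plane cubic, a conic together with a line, or three lines), and the criterion that a point $p\in S\cap H$ is singular on $S\cap H$ if and only if $H=T_p$. The guiding observation is that whenever a hyperplane section $C=H\cap S$ fails to be (absolutely) irreducible it contains a line component, and any line contained in $C$ lies on $S$, hence is one of the $27$ lines; so every instance of reducibility manufactures a line of $S$ whose position relative to the good point or good pair under consideration we can then exploit.

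For (a), I would start from the observation that $C_x=T_x(S)\cap S$ is a cubic curve (as $S$ is nondegenerate, $T_x\cap S$ is one-dimensional) on which $x$ is singular, by applying the criterion with $H=T_x$. If $x$ is not good it lies on a line $\ell$ of $S\times_k\bar k$; a line through $x$ contained in $S$ lies in the tangent plane $T_x(S)$, so $\ell\subseteq T_x(S)\cap S=C_x$ is a line component and $C_x$ is reducible, hence not absolutely irreducible. Conversely, if $C_x$ is not absolutely irreducible, inspecting the reducible cases (d) and (e) shows that over $\bar k$ it has a line component contained in $S$ and that its singular locus is contained in the union of its line components; since $x$ is a singular point of $C_x$, it follows that $x$ lies on a line of $S$, so $x$ is not good. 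This gives the equivalence in (a).

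For (b), I would use that part (a) makes both $C_x$ and $C_y$ absolutely irreducible plane cubics. A common component would force $C_x=C_y$, and an irreducible plane cubic is nondegenerate and so determines the plane it spans, giving $T_x=T_y$. But then the irreducible cubic $C_x$ would have two distinct singular points $x\neq y$ (again by the singularity criterion), which is impossible, since the line joining two singular points of an irreducible plane cubic would meet it with intersection multiplicity at least $4$, contradicting Bezout. Hence $C_x$ and $C_y$ share no component.

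For (c), put $L=\mathcal L(x,y)$. Since $x,y$ is a good pair, $L$ is not tangent to $S$ and meets none of the $27$ lines, so (as noted after Definition~\ref{def:cubic-good-point}) $L\cap S$ consists of the three distinct good points $x,y,z$ with $z=x\circ y$. Fix any plane $H$ through $x$ and $y$; then $L\subseteq H$ and $C=H\cap S$ is a cubic curve. If $C$ were not absolutely irreducible it would contain, over $\bar k$, a line of $S$; that line and $L$ are coplanar inside $H$, hence meet, contradicting that $L$ avoids the $27$ lines, so $C$ is absolutely irreducible. Finally, if some $p\in\{x,y,z\}$ were singular on $C$ the criterion would give $H=T_p$, whence $L\subseteq H=T_p$ with $p\in L$, making $L$ tangent to $S$ at $p$ — again contradicting that $x,y$ is a good pair. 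So $x,y,z$ are nonsingular on $C$. I do not expect a serious obstacle; the only step that needs care is the converse in (a), where one must verify that the singular locus of a reducible plane cubic really is confined to its line components (handling any non-reduced configurations), which is exactly what forces the singular point $x$ onto a line of $S$.
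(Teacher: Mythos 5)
Your proof is correct and follows exactly the route the paper has in mind: it deduces all three parts from the classification of hyperplane sections of a smooth cubic surface and the criterion that a point $p$ of $S\cap H$ is singular precisely when $H=T_p(S)$, both of which the paper recalls in the paragraphs immediately preceding the proposition and then labels as sufficient to make the statement ``easy to prove.'' The paper supplies no written proof, so there is nothing to contrast; the only step worth flagging as requiring the care you already gave it is the converse in (a), where you correctly observe that for the reducible hyperplane sections (conic plus line, or three distinct lines — multiple components being excluded since $S$ is smooth) the singular locus lies on the line components, forcing the singular point $x$ onto one of the $27$ lines.
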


\section{Dynamics of a product of Geiser involutions} \label{section:dynamics-geiser}


In this section we study the global dynamics of a product of two Geiser involutions $t_xt_y$, where $x,y$ is a good pair on $S$. We will show that the dynamics of $t_xt_y$ are determined by its restrictions to the fibers of the elliptic fibration preserved by $t_xt_y$. Using Theorem \ref{thm:txty-on-cubic-curve} we see that $t_xt_y$ restricts to a group translation on the nonsingular points of each fiber, making the dynamics of $t_xt_y$ particularly easy to study. As a slight disclaimer, we mention that some of the proofs in this section are classical in nature, so no originality is claimed here (other than applying them to the dynamical setting). Our main results in this section are Proposition \ref{prop:periodic-point-on-fiber-txty}, characterizing the periodic fibers of exact period $n$, and Proposition \ref{prop:props-Z_n(t_xt_y)}, proving the existence of $\bar{K}$-periodic points lying outside of $\mathcal{Z}_\infty(t_xt_y)$.

\begin{prop}  \label{prop:cubic-plane-section-invariant} \label{prop:fiber-group-translation}
Let $S$ be a smooth cubic surface, and $x,y$ a good pair. Let $H$ be a hyperplane going through $x,y$. Let $C$ be the hyperplane section $H\cap{S}$ (absolutely irreducible by Proposition \ref{prop:irreducible-cubic-fiber}). Then $C$ is invariant under $t_xt_y$ and the restriction of $t_xt_y$ to $C$ is a group translation on $C_{ns}$.
\end{prop}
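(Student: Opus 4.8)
The plan is to show first that $C$ is set-theoretically invariant under both $t_x$ and $t_y$, and then invoke Theorem~\ref{thm:txty-on-cubic-curve}(a) on the absolutely irreducible cubic curve $C$ to conclude that the restriction of $t_xt_y$ is a group translation on $C_{ns}$. The key geometric observation is that $t_x$ maps a point $z$ to the third intersection point $x\circ z$ of the line $\mathcal{L}(x,z)$ with $S$. If $z\in C = H\cap S$ and $x\in H$, then the whole line $\mathcal{L}(x,z)$ lies in the plane $H$ (two distinct points of $H$ span a line inside $H$), so its third intersection point with $S$ also lies in $H\cap S = C$. Hence $t_x(C)\subseteq C$, and similarly $t_y(C)\subseteq C$ since $y\in H$ as well. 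Because $x,y$ is a good pair and $x,y\in H$, Proposition~\ref{prop:irreducible-cubic-fiber}(c) tells us $C$ is absolutely irreducible and that the three points in $\mathcal{L}(x,y)\cap S$ are nonsingular on $C$; in particular $x,y\in C_{ns}(\bar k)$, so the Geiser involutions $t_x,t_y$ of the curve $C$ are defined and make sense in the sense of Theorem~\ref{thm:txty-on-cubic-curve}.

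Next I would identify the restriction of the \emph{surface} Geiser involution $t_x$ to $C$ with the \emph{curve} Geiser involution of $C$ through $x$. This is essentially a matter of unwinding definitions: both send $z\mapsto x\circ z$, the third point of $\mathcal{L}(x,z)\cap S$, and we have just checked this line lies in $H$, so intersecting with $S$ is the same as intersecting with $C=H\cap S$ (for $z\ne x$, and with the tangent line to $C$ at $x$ when $z=x$, which again lies in $H$ since $H$ is a plane containing the tangent direction). A small point to be careful about: one must check that the third intersection point of $\mathcal{L}(x,z)$ with $S$, computed scheme-theoretically in $\mathbb{P}^3$, agrees with the third intersection point computed in $H\cong\mathbb{P}^2$ with $C$; this follows because $\mathcal{L}(x,z)\subset H$ and $C = H\cap S$ as schemes, so the scheme $\mathcal{L}(x,z)\cap S$ equals $\mathcal{L}(x,z)\cap C$.

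Having made this identification, apply Theorem~\ref{thm:txty-on-cubic-curve}(a) to the absolutely irreducible plane cubic $C$: choosing any group structure on $C_{ns}(\bar k)$, the product $t_xt_y$ restricted to $C_{ns}(\bar k)$ is $z\mapsto (y-x)+z$, which is precisely a group translation. Since $C$ is irreducible and invariant, and $t_xt_y$ is a birational self-map of $S$ restricting to a birational (in fact biregular on $C_{ns}$) self-map of $C$, this completes the proof. I expect the only real subtlety — the "main obstacle," though it is minor — to be the bookkeeping around points where $t_x$ or $t_y$ is not literally defined: $t_x$ is not defined at $x$, and the composition $t_xt_y$ must be interpreted as a birational map. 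The good-pair hypothesis together with Proposition~\ref{prop:irreducible-cubic-fiber}(c) ensures $x,y$ and $x\circ y$ are distinct nonsingular points of $C$, so the curve-level statement of Theorem~\ref{thm:txty-on-cubic-curve}(a) applies verbatim and the indeterminacy is confined to finitely many points of $C$, consistent with the restriction being a translation on $C_{ns}$.
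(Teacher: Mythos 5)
Your proof is correct and follows essentially the same approach as the paper's (very terse) proof: invariance of $C$ follows from the definition of the Geiser involution because the line through $x$ (resp.\ $y$) and any point of $C$ lies in $H$, and the group-translation claim is Theorem~\ref{thm:txty-on-cubic-curve} applied to the curve $C$. You have simply filled in the details that the paper treats as ``clear from the definition,'' including the identification of the surface-level Geiser restricted to $C$ with the curve-level Geiser, which is a sound and worthwhile check.
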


\begin{proof}
 \begin{inparaenum} [\itshape(a)]
That $C$ is invariant is clear from the definition of $t_x$ and $t_xt_y$ is a group translation by Theorem \ref{thm:txty-on-cubic-curve}.
\end{inparaenum}
\end{proof}

%
%
%

Let $S \subset \mathbb{P}^3$ be a smooth cubic surface over a perfect field $k$. An \emph{elliptic fibration} on $S$ is a rational map $\varphi: S\rightarrow{B}$ defined over $k$, such that the geometric generic fiber is birational to a curve of genus~$1$. For a field $k$ of characteristic~$0$ the base of an elliptic fibration must be of genus~$0$ and has a rational point, so it is isomorphic to $\mathbb{P}^1$(see Brown and Ryder \cite[Section 1]{article:brown-ryder2010} for the definition, and a proof of the last statement). Given a good pair $x,y$ on ${S}$, we can define an elliptic fibration by taking the pencil of planes through the line $L=\mathcal{L}(x,y)$: we choose two distinct planes $H_1 = \{f=0\}, H_2 = \{g=0\}$ passing through $L$ (where $f,g$ are linear forms); then the rational map $\varphi=(f,g)$ is an elliptic fibration. We call such a fibration the \emph{linear fibration} associated with the good pair $x,y$. The following is immediate from Proposition~\ref{prop:cubic-plane-section-invariant}.

\begin{prop} \label{prop:geiser-fibration-stabilized}
Let $S \subset \mathbb{P}^3$ be a smooth cubic surface. Given a good pair $x,y$ on ${S}$, the fibers of the linear fibration through $x,y$ are invariant under the birational automorphism $t_xt_y$.
\end{prop}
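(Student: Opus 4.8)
The plan is to reduce the statement to Proposition~\ref{prop:cubic-plane-section-invariant} by unwinding the definition of the linear fibration. First I would recall that the linear fibration associated with the good pair $x,y$ is the rational map $\varphi = (f,g)\colon S \dashrightarrow \mathbb{P}^1$, where $H_1 = \{f = 0\}$ and $H_2 = \{g = 0\}$ are two distinct planes through the line $L = \mathcal{L}(x,y)$. A point $s \in S(\bar k)$ lies over $[a:b] \in \mathbb{P}^1$ precisely when $s$ lies on the plane $H_{[a:b]} = \{bf - ag = 0\}$ of the pencil; every plane in this pencil contains $L$, and conversely every plane in $\mathbb{P}^3$ containing $L$ appears in the pencil. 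Hence the scheme-theoretic fiber of $\varphi$ over a general point $[a:b]$ is the hyperplane section $H_{[a:b]} \cap S$, which is a plane through both $x$ and $y$ (since $x,y \in L \subset H_{[a:b]}$).

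The key step is then simply to invoke Proposition~\ref{prop:cubic-plane-section-invariant}: since $H_{[a:b]}$ is a hyperplane through the good pair $x,y$, the curve $C = H_{[a:b]} \cap S$ is absolutely irreducible (Proposition~\ref{prop:irreducible-cubic-fiber}) and is invariant under $t_xt_y$, with $t_xt_y$ restricting to a group translation on $C_{ns}$. Since this holds for every member of the pencil, every fiber of $\varphi$ is invariant under $t_xt_y$, which is exactly the assertion.

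The main thing to be careful about — rather than a genuine obstacle — is the distinction between the three fibers that are ``special'' for the line $L$ itself (the planes $T_x$, $T_y$, and possibly others where $L$ meets $S$ tangentially or the section degenerates) and the generic fiber. But the definition of a good pair already rules out $L$ being tangent to $S$ or meeting the $27$ lines, so Proposition~\ref{prop:irreducible-cubic-fiber}(c) guarantees absolute irreducibility of \emph{every} plane section through $x,y$, and Proposition~\ref{prop:cubic-plane-section-invariant} applies uniformly to all of them. One should also note that $t_xt_y$ being a birational automorphism of $S$, the statement ``every fiber is mapped to itself'' is to be read in the birational sense (i.e.\ $t_xt_y$ maps the generic point of each fiber to itself), which is precisely what invariance of each $C$ under $t_xt_y$ provides. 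Since the whole argument is a direct translation of the already-proved Proposition~\ref{prop:cubic-plane-section-invariant} through the definition of the pencil, no further work is needed — this is why the author marks the statement as immediate.
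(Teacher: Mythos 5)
Your proof is correct and follows exactly the route the paper intends: the paper states the proposition is ``immediate from Proposition~\ref{prop:cubic-plane-section-invariant},'' and you have simply unwound the definition of the linear fibration as a pencil of planes through $\mathcal{L}(x,y)$ and applied that proposition fiber by fiber. No substantive difference from the paper's argument.
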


From Proposition \ref{prop:fiber-group-translation} and the proof of Lemma \ref{lem:group-translation}, we see that the only fibers containing periodic points are those for which $t_xt_y$ restricts to a group translation of finite order (aside from the singular points of the singular fibers, which we will show to be fixed points). Denote by $Fixed(\varphi)$ the set of fixed points of a rational map $\varphi$, then:

\begin{prop}  \label{prop:fixed-txty}
Let $x,y$ be a good pair on a smooth cubic surface $S$ over a perfect field $k$, then the set of fixed points of $t_xt_y$ is equal to $Fixed(t_x)\cap{}Fixed(t_y)$.  
\end{prop}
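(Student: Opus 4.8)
The plan is to prove the two inclusions $Fixed(t_xt_y) \supseteq Fixed(t_x)\cap Fixed(t_y)$ and $Fixed(t_xt_y)\subseteq Fixed(t_x)\cap Fixed(t_y)$ separately. The first inclusion is essentially formal: if $t_x(z)=z$ and $t_y(z)=z$ (in particular $z$ lies in the domains of both $t_x$ and $t_y$, so the composition is defined at $z$), then $t_xt_y(z)=t_x(z)=z$. Here one should be slightly careful about what ``fixed point'' means in the birational setting — a fixed point of $t_x$ is a point $z \notin \mathcal{Z}_\infty(t_x)$ with $t_x(z)=z$ — but since $\mathcal{Z}(t_x)=\{x\}$ and a fixed point is not in the indeterminacy locus, and $t_x$ is an involution, one checks that if $z\in Fixed(t_x)\cap Fixed(t_y)$ then $z$ is not in $\mathcal{Z}_\infty(t_xt_y)$ and the equality $t_xt_y(z)=z$ genuinely holds.

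For the reverse (and more substantial) inclusion, suppose $z\in Fixed(t_xt_y)$, so $z\notin \mathcal{Z}_\infty(t_xt_y)$ and $t_xt_y(z)=z$. First I would reduce to the generic (non-collinear) case: if $z$ lies on the line $L=\mathcal{L}(x,y)$, then $z$ is one of the three intersection points of $L$ with $S$ (which by Proposition~\ref{prop:plane-section-through-L_x_y-points-on-line-are-nonsingular} are distinct, call them $x,y,z_0$), and one computes $t_y(z_0)=x$, $t_x(x)$ undefined — so these collinear points need to be handled directly by the explicit description of $t_x,t_y$ on $L\cap S$, and one sees none of $x,y,z_0$ is fixed by $t_xt_y$, so there is nothing to prove there. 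Now assume $z\notin L$. Then $z,x,y$ span a plane $H$, and by Proposition~\ref{prop:irreducible-cubic-fiber} the section $C=H\cap S$ is an absolutely irreducible cubic curve containing $x,y,z$, invariant under $t_x$, $t_y$ and $t_xt_y$ (Proposition~\ref{prop:cubic-plane-section-invariant}). Working inside $C$, we can apply Theorem~\ref{thm:txty-on-cubic-curve}(a): on $C_{ns}$, with a chosen group structure, $t_xt_y$ acts as translation by $y-x$ (assuming $x,y,z$ are nonsingular on $C$, which holds since $x,y$ are good points and we may assume $z$ is good as well, or argue that a singular point of $C$ would be fixed by every $t_w$ fixing $C$). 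Thus $t_xt_y(z)=z$ forces $(y-x)+z=z$, i.e.\ $y=x$ in the group $C_{ns}$ — but $x,y$ are distinct points, so $x=y$ as points of $\mathbb{P}^3$ would be a contradiction unless I have mislabelled; the correct reading is that $t_xt_y(z)=z$ gives $x\circ(y\circ z) = z$, equivalently $y\circ z = x\circ z$ (using that $t_x$ is an involution on $C_{ns}$), which by definition of $\circ$ means the line $\mathcal{L}(y,z)$ and $\mathcal{L}(x,z)$ meet $C$ in the same third point; combined with the group law this will force $t_y(z)=t_x(z)$ and then, since $t_x(z),t_y(z),z$ all lie in $C$, chasing through $t_x(t_y(z))=z$ one more time yields $t_x(z)=z$ and $t_y(z)=z$.

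The main obstacle I anticipate is the bookkeeping around degenerate positions and the birational (partially-defined) nature of the maps: making sure that ``$z\in Fixed(t_xt_y)$'' really licenses the manipulations $t_xt_y(z)=t_x(t_y(z))$ with all intermediate points well-defined and lying on the cubic curve $C$, and ruling out the collinear and singular-point cases cleanly. Once one is allowed to pass to the invariant cubic curve $C$ through $x,y,z$ and invoke Theorem~\ref{thm:txty-on-cubic-curve}(a), the algebra is a one-line computation in the group $C_{ns}(\bar k)$: translation by a fixed element fixes a point iff that element is the identity, i.e.\ iff $x$ and $y$ determine the same translation, which is what forces $t_x(z)=t_y(z)$ and hence (applying $t_x$ and using $t_xt_y(z)=z$) that $z$ is fixed by each of $t_x$ and $t_y$ individually. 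I would present the degenerate-case analysis first (a short paragraph using Proposition~\ref{prop:plane-section-through-L_x_y-points-on-line-are-nonsingular}), then the generic argument via the invariant fiber and Theorem~\ref{thm:txty-on-cubic-curve}(a).
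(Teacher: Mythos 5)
Your approach---passing to the invariant plane section $C = \mathcal{P}(x,y,z)\cap S$ and using the group-translation description of $t_xt_y$ on $C_{ns}$ from Theorem~\ref{thm:txty-on-cubic-curve}(a)---is a legitimate alternative to the paper's proof, which instead argues directly on the surface using the symmetry identity $t_x(w)=t_w(x)$ together with the involutive property of $t_w$ (from $t_x(w)=t_y(w)$ one gets $t_w(x)=t_w(y)$, and if $t_w(x)\neq w$ applying $t_w$ yields $x=y$, a contradiction). The paper's route is shorter and avoids invoking the plane-section machinery; yours foreshadows Corollary~\ref{cor:txty-fixed-iff-singular}(a), which characterizes fixed points exactly as singular points of the fiber.

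The gap in your write-up is the closing step. Your ``first reading'' was actually the correct one and should not have been abandoned: with $x$ as unit, $(y-x)+z=z$ forces $y=x$ in $C_{ns}$, which is impossible since $x\neq y$ are distinct nonsingular points of $C$; the conclusion is therefore that \emph{no nonsingular point of $C$ is fixed by $t_xt_y$}, so a fixed point $z$ off $\mathcal{L}(x,y)$ must be singular on $C$. From there the argument closes exactly as in your parenthetical aside: $z$ singular on $C=H\cap S$ forces $H=T_z(S)$, hence $C=C_z$ and $x,y\in C_z$, so the lines $\mathcal{L}(x,z)$ and $\mathcal{L}(y,z)$ are tangent at $z$ and $t_x(z)=t_y(z)=z$. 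Your ``corrected reading,'' by contrast, does not close: from $t_x(z)=t_y(z)$ you cannot deduce $t_x(z)=z$ by ``chasing through $t_x(t_y(z))=z$ one more time,'' because that identity is already automatic from $t_x(z)=t_y(z)$ and $t_x^2=\mathrm{id}$, so it carries no new information. To extract $t_x(z)=z$ from $t_x(z)=t_y(z)$ you would need the symmetry $t_x(z)=t_z(x)$ and then apply $t_z$ as the paper does---or simply stick with the singular-point argument, which is the natural conclusion of your fiber-wise approach.
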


\begin{proof}
It is obvious that if $w\in{Fixed(t_x)\cap{}Fixed(t_y)}$, then $w\in{Fixed(t_xt_y)}$. In the other direction, suppose $t_xt_y(w)=w$. Assume $w\neq{x}$; then we can apply $t_x$ to both sides of the equation and get $t_x(w)=t_y(w)$ (since $w\not\in\{x\}=\mathcal{Z}(t_x)$). We know that $t_x(w)=t_w(x)$, so that we get $t_w(x)=t_w(y)$. Now, if $t_w(x)\neq{w}$, we can apply $t_w$ to both sides of the equation and get $x=y$, a contradiction. Therefore, still under the assumption of $w\neq{x}$, we get $t_w(x)=t_x(w)=w$ and also $t_y(w)=w$ as required. If $w=x$, then we have $t_xt_y(x)=x$, which implies $t_y(x)\in{C_x}$ since $t_x^{-1}(x)=C_x$. The points $x,y, t_y(x)$ are collinear, and since $x,t_y(x)\in{C_x}$, we get that $y\in{C_x}$ as well, which implies $t_y(x)=x$; but then $t_xt_y(x)=t_x(x)$ is indeterminate, a contradiction. 
%
\end{proof}

\begin{cor} \label{cor:txty-fixed-iff-singular}\label{cor:12-fixed-points}
Let $S,x,y$ be as in Proposition \ref{prop:fixed-txty}. Then:
\begin{enumerate}[(a)]
\item A point $w\in{S(\bar{k})}$ is a fixed point of $t_xt_y$ if and only if $w$ is a singular point of the curve $C=\mathcal{P}(x,y,w)\cap{S}$.
\item The map $t_xt_y$ has at most $12$ fixed points over $\bar{k}$.
\end{enumerate}
\end{cor}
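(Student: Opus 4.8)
The plan is to derive both statements from Proposition \ref{prop:fixed-txty} together with the earlier classification of hyperplane sections of a smooth cubic surface. For part (a), I would argue as follows. By Proposition \ref{prop:fixed-txty}, a point $w$ is fixed by $t_xt_y$ precisely when $w\in Fixed(t_x)\cap Fixed(t_y)$. Now I need to translate "fixed by $t_x$" into a statement about the plane $\mathcal{P}(x,y,w)$. The key observation is that $t_x(w)=w$ exactly when the line $\mathcal{L}(x,w)$ is tangent to $S$ at $w$ (the third intersection point coincides with $w$), equivalently $\mathcal{L}(x,w)\subset T_w(S)$; and similarly $t_y(w)=w$ iff $\mathcal{L}(y,w)\subset T_w(S)$. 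If both hold (and $x,y,w$ are noncollinear, which must be checked using that $x,y$ is a good pair), then the plane $\mathcal{P}(x,y,w)$, being spanned by two lines through $w$ lying in $T_w(S)$, equals $T_w(S)$. Then by the fact recalled just before Proposition \ref{prop:irreducible-cubic-fiber} — that a point of $S\cap H$ is singular on $S\cap H$ iff $H=T_w(S)$ — the point $w$ is singular on $C=\mathcal{P}(x,y,w)\cap S$. Conversely, if $w$ is singular on $C$, then $\mathcal{P}(x,y,w)=T_w(S)$, so both $\mathcal{L}(x,w)$ and $\mathcal{L}(y,w)$ lie in $T_w(S)$, forcing $t_x(w)=t_y(w)=w$. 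I would also need to handle the degenerate cases where $w$ coincides with $x$ or $y$, but Proposition \ref{prop:fixed-txty} already rules those out (its proof shows $x,y\notin Fixed(t_xt_y)$), so a fixed $w$ is automatically distinct from both and noncollinear with them.

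For part (b), I would count. A fixed point $w$ lies on the curve $C=\mathcal{P}(x,y,w)\cap S$, and since $x,y,w\in C$, this plane contains the line $\mathcal{L}=\mathcal{L}(x,y)$; hence every fixed point lies on some fiber of the linear fibration associated with $(x,y)$. By Proposition \ref{prop:irreducible-cubic-fiber}(c), each such fiber is an absolutely irreducible plane cubic curve, and an irreducible plane cubic has at most one singular point (a node or a cusp), since a cubic with two singular points would be forced to degenerate. So each fiber contributes at most one fixed point. It remains to bound the number of singular fibers in the pencil: a generic pencil of plane cubics has $12$ singular members (the Euler characteristic count, or equivalently the degree of the discriminant of a pencil of plane cubics), and the classification of hyperplane sections rules out any worse-than-nodal-or-cuspidal behavior contributing extra singular points. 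This gives at most $12$ fixed points. The main obstacle I anticipate is making the "at most $12$ singular fibers" step rigorous without invoking heavy machinery: one clean route is to note that a singular fiber $\mathcal{P}(x,y,w)\cap S$ corresponds to a plane through $\mathcal{L}$ that is tangent to $S$, i.e. to $\mathcal{L}$ meeting the dual variety $S^\vee\subset(\mathbb{P}^3)^\vee$ — and since $\deg S^\vee = 12$ for a smooth cubic surface, a general line in the dual $\mathbb{P}^3$ meets it in $12$ points; one then argues the line corresponding to the pencil through $\mathcal{L}$ is in sufficiently general position (this is where the good-pair hypothesis, ensuring $\mathcal{L}$ avoids the $27$ lines and is not tangent to $S$, does its work) so that each such plane is only simply tangent, contributing exactly one node and hence one fixed point.

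Alternatively, if one wishes to avoid the dual variety, the bound of $12$ can be read off directly from the fact, to be proved in the next section, that periodic fibers are cut out by division-polynomial-type equations; but since the corollary is stated here, I would prefer the self-contained dual-variety argument above, or simply the elementary pencil-of-cubics count: a pencil of plane cubics $\lambda F + \mu G$ has a discriminant of degree $12$ in $(\lambda:\mu)$, and each root of the discriminant gives one singular cubic with (generically) a single node, hence at most $12$ singular fibers and at most $12$ fixed points of $t_xt_y$.
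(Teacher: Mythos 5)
Your part (a) argument is essentially the same as the paper's: both reduce to Proposition \ref{prop:fixed-txty} ($Fixed(t_xt_y)=Fixed(t_x)\cap Fixed(t_y)$), then observe that $t_x(w)=w$ forces $x\in T_w(S)$ (equivalently $x\in C_w$), so that $\mathcal{P}(x,y,w)=T_w(S)$ and $w$ is singular on $C=C_w$; the paper phrases it via $x,y\in C_w$ while you phrase it via the two lines $\mathcal{L}(x,w),\mathcal{L}(y,w)\subset T_w(S)$, which is the same thing.

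For part (b) you take a genuinely different route. The paper blows up $S$ at the three points $x,y,z$ of $\mathcal{L}(x,y)\cap S$ to obtain a rational elliptic surface and cites Miranda's bound of at most $12$ singular fibers, with the crucial remark that the blow-up resolves no fiber singularities because $x,y,z$ are smooth on every fiber (Proposition \ref{prop:plane-section-through-L_x_y-points-on-line-are-nonsingular}). You instead bound the number of singular members of the pencil directly, either by the degree $d(d-1)^2=12$ of the dual variety of the smooth cubic surface, or equivalently by the degree-$12$ discriminant of the associated pencil of plane cubics, and then use irreducibility of each fiber (Proposition \ref{prop:irreducible-cubic-fiber}) to see that each singular fiber carries exactly one singular point. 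Both arguments are correct and yield the same bound; yours is more elementary and self-contained, avoiding the rational elliptic surface machinery, while the paper's phrasing connects more smoothly to the Weierstrass-form discriminant $\Psi_1$ used in the next section. One small point worth making explicit in your version: to know the discriminant is not identically zero (so the bound $12$ actually bites) you should note that the general plane section of a smooth cubic surface is a smooth cubic, which is standard, or invoke Theorem \ref{thm:manin-infinite-order} to rule out all fibers being singular.
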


\begin{proof} \mbox{}
\begin{enumerate}[(a)]
\item We saw in the proof of Proposition \ref{prop:fixed-txty} that $w$ is a fixed point of $t_xt_y$ if and only if $x,y\in{C_w}$. Therefore $C=C_w$, and $w$ is singular on $C_w$ (by Proposition \ref{prop:irreducible-cubic-fiber}).
\item The linear fibration $\pi$ associated with $x,y$ can be blown up at the three points $x,y,z$ in $\mathcal{L}(x,y)\cap{S}$ to give a rational elliptic surface, which only has at most $12$ singular fibers (see Miranda \cite[Section 1]{article:miranda1990}) (Note that no singularities on the fibers of $\pi$ are resolved by the blowup, since $x,y,z$ are smooth on all fibers by Proposition \ref{prop:plane-section-through-L_x_y-points-on-line-are-nonsingular}). Each of these singular fibers is associated with a fixed point of $t_xt_y$ by Corollary \ref{cor:txty-fixed-iff-singular}. \qedhere
\end{enumerate}
\end{proof}

We will say that a fiber $C$ in the linear fibration associated with a good pair $\{x,y\}$ is a \emph{periodic fiber} of period $n>0$ if $(t_xt_y)^n$ is the identity when restricted to the fiber $C$, and $n$ is the minimal positive integer satisfying this.

\begin{prop} \label{prop:periodic-point-on-fiber-txty}
Let $x,y$ be a good pair on a smooth cubic surface $S\subset\mathbb{P}^3$ defined over a perfect field $k$. Let $w\in{S(\bar{k})}\setminus\mathcal{Z}_\infty(t_xt_y)$ be noncollinear with $x,y$, and denote $C=\mathcal{P}(x,y,w)\cap{S}$. Then the following are equivalent:
\begin{enumerate}[(a)]
\item The point $w$ is a periodic point of exact period $n>1$ of $t_xt_y$.
\item The curve $C$ is $t_xt_y$-periodic of period $n$ (which is the same as saying $t_xt_y$ is of order $n$ in $Aut(C)$), and $w$ is a nonsingular point of $C$. 
\item The point $y$ is of order $n$ in the group $C_{ns}(k)$ with $x$ chosen to be the unit element (the point $y$ is nonsingular on $C$ by Proposition \ref{prop:plane-section-through-L_x_y-points-on-line-are-nonsingular}).
\end{enumerate}
\end{prop}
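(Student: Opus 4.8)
The plan is to prove the chain of equivalences (a) $\Leftrightarrow$ (b) $\Leftrightarrow$ (c) by combining the structural results already established for the linear fibration and the group-translation description of $t_xt_y$ on each fiber. The key observation is that since $w$ is noncollinear with $x,y$, the plane $\mathcal{P}(x,y,w)$ is well-defined and its section $C=\mathcal{P}(x,y,w)\cap S$ is an absolutely irreducible cubic curve containing $x,y,w$, by Proposition \ref{prop:irreducible-cubic-fiber}(c). Moreover $C$ is one of the fibers of the linear fibration associated with $\{x,y\}$, and by Proposition \ref{prop:geiser-fibration-stabilized} it is invariant under $t_xt_y$; by Proposition \ref{prop:fiber-group-translation} the restriction of $t_xt_y$ to $C_{ns}$ is the group translation $z\mapsto (y-x)+z$ once we fix $x$ as the unit element (note $x,y$ are nonsingular on $C$ by Proposition \ref{prop:plane-section-through-L_x_y-points-on-line-are-nonsingular}).

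\textbf{(a) $\Rightarrow$ (b).} Suppose $w$ is periodic of exact period $n>1$. Since $w\notin\mathcal{Z}_\infty(t_xt_y)$, all its forward iterates avoid the indeterminacy locus, and in particular $w$ cannot be a singular point of $C$: by Corollary \ref{cor:txty-fixed-iff-singular}(a) a singular point of a fiber is a fixed point of $t_xt_y$, which would force $n=1$. Hence $w\in C_{ns}(\bar k)$. Now $t_xt_y$ acts on $C_{ns}$ as translation by $y-x$; since $(t_xt_y)^n(w)=w$ we get $n(y-x)=0$ in the group $C_{ns}$, and minimality of $n$ for $w$ gives that $n$ is exactly the order of $y-x$, hence $(t_xt_y)^n$ is the identity on all of $C_{ns}$, and therefore (being a birational involution-product fixing a dense subset) on all of $C$. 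This is precisely the statement that $C$ is $t_xt_y$-periodic of period $n$; equivalently $t_xt_y$ has order $n$ in $\mathrm{Aut}(C)$.

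\textbf{(b) $\Rightarrow$ (c) and (c) $\Rightarrow$ (a).} If $C$ is periodic of period $n$ and $w$ is nonsingular, then translation by $y-x$ on $C_{ns}$ has order $n$, i.e.\ $n(y-x)=0$ and no smaller positive multiple vanishes; since $x$ is the unit, $y-x=y$ in the group law and so $y$ has order $n$ in $C_{ns}(k)$ — this gives (c) (rationality of the order statement is clear as $x,y\in S(k)$ and the group law is defined over $k$). Conversely, if $y$ has order $n$ in $C_{ns}(k)$, then translation by $y$ has order $n$ on $C_{ns}(\bar k)$, so $(t_xt_y)^n$ fixes every point of $C_{ns}$, in particular $(t_xt_y)^n(w)=w$; and no smaller period works at $w$ because that would give a smaller order for $y$. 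Since $w\notin\mathcal{Z}_\infty(t_xt_y)$ by hypothesis, $w$ is a genuine periodic point of exact period $n$, and $n>1$ because $w$ is not a fixed point (if it were, Corollary \ref{cor:txty-fixed-iff-singular} would make $w$ singular on $C$, contradicting that translation by $y\neq x$ moves the nonsingular locus freely; alternatively $n=1$ would force $y=x$). This closes the cycle.

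The main obstacle I anticipate is bookkeeping about which points of $C$ are singular versus nonsingular and ensuring the passage between ``$(t_xt_y)^n$ is the identity on $C_{ns}$'' and ``$C$ is $t_xt_y$-periodic of period $n$'' is justified — i.e.\ that an automorphism of the (possibly nodal or cuspidal) curve $C$ that fixes the dense open $C_{ns}$ pointwise is the identity on $C$, and that ``order $n$ in $\mathrm{Aut}(C)$'' is the right reading of ``periodic fiber of period $n$.'' This is routine but needs the explicit classification of hyperplane sections recalled before Proposition \ref{prop:irreducible-cubic-fiber} (the singular fibers are irreducible nodal or cuspidal cubics with a unique singular point) together with the fact, already noted, that $x,y,w$ all lie in $C_{ns}$. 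Everything else is a direct application of Theorem \ref{thm:txty-on-cubic-curve}(a), Proposition \ref{prop:fiber-group-translation}, and Corollary \ref{cor:txty-fixed-iff-singular}.
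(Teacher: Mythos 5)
Your argument is correct and follows essentially the same route as the paper's proof: identify $C$ as an invariant fiber via Propositions \ref{prop:irreducible-cubic-fiber}, \ref{prop:geiser-fibration-stabilized}, \ref{prop:fiber-group-translation}, use Corollary \ref{cor:txty-fixed-iff-singular} to place $w$ in $C_{ns}$, and then reduce everything to the translation equation $(t_xt_y)^n(w)=ny+w$ on $C_{ns}$ with $x$ as unit. Your extra care about passing from ``$(t_xt_y)^n$ fixes $C_{ns}$ pointwise'' to ``$(t_xt_y)^n=\mathrm{id}$ in $\mathrm{Aut}(C)$'' is a detail the paper leaves implicit, but the substance is the same.
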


\begin{proof}
Let $w\in{S}$ be periodic of exact period $n>1$. By Proposition~\ref{prop:cubic-plane-section-invariant}, the birational automorphism $t_xt_y$ restricts to an automorphism of $C$. We choose $x$ to be the unit element of the group structure on $C_{ns}(k)$ (note that $w\in{C_{ns}(k)}$ since the period of $w$ is greater than $1$, and then by Corollary \ref{cor:txty-fixed-iff-singular} it is nonsingular), and get
\begin{equation} \label{equation:group-translation}
w=(t_xt_y)^{n}(w)=ny+w
\end{equation}
(see Theorem \ref{thm:txty-on-cubic-curve}), from which we get $ny=0$, meaning that the order of $y$ on the cubic curve is $n$ and that $(t_xt_y)^n = id$. If the order of $t_xt_y$ was less than $n$, we would get a contradiction to $n$ being the exact period of $w$. So we have proved that \textit{(a)} implies \textit{(b)} and \textit{(c)}. Similarly, one uses equation (\ref{equation:group-translation}) to prove the other implications.
\end{proof}

We can say more for the period $n=2$:

\begin{prop}\label{prop:txty-period-2}
Let $S,x,y,\varphi,w$ and $C$ be as in Proposition \ref{prop:periodic-point-on-fiber-txty}.
We restrict $\circ$ to the curve $C$, where it is fully defined. Then the following are equivalent:
\begin{enumerate}[(a)]
\item \label{prop:txty-period-2:item-a} The point $w\in{S(\bar{k})}$ is a periodic point of exact period $n=2$ of $t_xt_y$.
\item \label{prop:txty-period-2:item-d} $x\circ{x} = y\circ{y}$.
\item \label{prop:txty-period-2:item-e} $x\circ{x}\in{C_x\cap{C_y}}$.
\end{enumerate}
\end{prop}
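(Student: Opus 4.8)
The plan is to split the chain of equivalences into (a)$\Leftrightarrow$(d), which follows from Proposition~\ref{prop:periodic-point-on-fiber-txty}, and (d)$\Leftrightarrow$(e), which reduces to a tangent-line computation in the plane $H:=\mathcal{P}(x,y,w)$. Throughout I use that $\{x,y\}$ is a good pair: this gives that $x$ and $y$ are nonsingular on $C$ (Proposition~\ref{prop:plane-section-through-L_x_y-points-on-line-are-nonsingular})---so $H\neq T_x(S)$ and $H\neq T_y(S)$, since otherwise $C$ would equal $C_x$, resp.\ $C_y$, on which $x$, resp.\ $y$, is singular---and that $\mathcal{L}(x,y)$ is not tangent to $S$ (Definition~\ref{def:cubic-good-point}).

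For (a)$\Leftrightarrow$(d): by Proposition~\ref{prop:periodic-point-on-fiber-txty} applied with $n=2$, the point $w$ has exact period $2$ precisely when $y$ has order $2$ in $C_{ns}$ with unit $x$; since the group law with unit $x$ sends $(a,b)$ to $x\circ(a\circ b)$ and since $y\neq x$ automatically, this happens exactly when $x\circ(y\circ y)=x$. It then suffices to record the elementary fact that, for a nonsingular point $z$ of a plane cubic, $x\circ z=x$ if and only if $z=x\circ x$---in either case $\mathcal{L}(x,z)$ is forced to be the tangent line to $C$ at $x$. Applying this with $z=y\circ y$, which lies in $C_{ns}$ because $\circ$ maps $C_{ns}\times C_{ns}$ into $C_{ns}$, turns $x\circ(y\circ y)=x$ into $y\circ y=x\circ x$, which is (d).

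The geometric core is the claim, valid for \emph{every} admissible $w$, that the point $x\circ x$ computed inside $C$ always lies on $C_x$, and likewise $y\circ y$ always lies on $C_y$. Indeed, since $H\neq T_x(S)$ the intersection $T_x(S)\cap H$ is a line; it is the tangent line to $C$ at $x$, because that tangent line lies in $T_x(S)$ (as $C\subset S$) and in $H$ (as $C\subset H$). As $x\circ x$ lies on this tangent line and on $S$, it lies in $T_x(S)\cap S=C_x$, and symmetrically $y\circ y\in C_y$. This gives (d)$\Rightarrow$(e) at once. For (e)$\Rightarrow$(d), put $P:=x\circ x$; then $P\in C_x$ automatically, so (e) asserts $P\in C_y$, whence $P\in T_y(S)\cap H$, the tangent line to $C$ at $y$; therefore $P\in\{y,\,y\circ y\}$. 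The possibility $P=y$ would mean $x\circ x=y$, so the tangent to $C$ at $x$ passes through $y$ and hence equals $\mathcal{L}(x,y)$, making $\mathcal{L}(x,y)$ tangent to $S$ at $x$---contradicting that $\{x,y\}$ is a good pair. So $P=y\circ y$, i.e.\ (d) holds, and the three conditions are equivalent.

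The step I expect to be the main obstacle is the geometric claim of the third paragraph: one must carefully identify the tangent line to the plane cubic $C$ at a point of $\mathcal{L}(x,y)$ with the slice $T_\bullet(S)\cap H$ of the surface tangent plane, and must rule out the degenerate configurations ($H=T_x(S)$, a flex at $x$ or $y$, or $x\circ x=y$) using exactly the good-pair hypothesis. Once this dictionary between the chord--tangent law on $C$ and the surface data $C_x,C_y$ is in place, the remaining steps are routine.
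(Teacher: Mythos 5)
Your proof is correct and follows essentially the same route as the paper's: the paper also establishes (a)$\Leftrightarrow$(d) via the group law $((t_xt_y)^2(w)=w\iff 2y=0\iff x\circ(y\circ y)=x$, then ``apply $x$''), and (d)$\Leftrightarrow$(e) by identifying the tangent line to $C$ at $y$ with $T_y(S)\cap H$. Your write-up is somewhat more careful than the paper's—in particular you explicitly record the automatic containment $x\circ x\in C_x$ (which explains why (e) only carries new information about $C_y$) and you explicitly rule out the degenerate possibility $x\circ x=y$ via the good-pair hypothesis, a step the paper leaves implicit.
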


\begin{proof}
The statements \textit{(\ref{prop:txty-period-2:item-d})} and \textit{(\ref{prop:txty-period-2:item-e})} are equivalent, because $x\circ{x}\in{C_y}$ means that the line through $y$ and $x\circ{x}$ has a double point at $y$, and as this line is contained in $\mathcal{P}(x,y,w)$, it must mean that this is the tangent line to $C$ at $y$, but then by definition $x\circ{x}=y\circ{y}$. The statements \textit{(\ref{prop:txty-period-2:item-a})} and \textit{(\ref{prop:txty-period-2:item-d})} are equivalent since
$$(t_xt_y)^2(w)=w \iff 2y+w=w \iff 2y=0 \iff x\circ(y\circ{y})=x,$$ and we can apply $x$ to both sides of the last equation. 
\end{proof}

\begin{cor} \label{cor:curves-of-period-2}
Let $S,x,y, \varphi$ be as in Proposition \ref{prop:periodic-point-on-fiber-txty}; then there are three periodic fibers of period $2$ defined over $\bar{k}$, counted with multiplicity, in the linear fibration of $S$ through the points $x,y$ (and these contain all periodic points of period $2$).
\end{cor}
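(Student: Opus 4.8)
The plan is to count the periodic fibers of period $2$ as the solutions to the condition $2y = 0$ in the group $C_{ns}(k)$ with $x$ as the unit, as $C$ ranges over the pencil of planes through $\mathcal{L}(x,y)$. By Proposition~\ref{prop:periodic-point-on-fiber-txty} and Corollary~\ref{cor:curves-of-period-2}'s siblings, a fiber $C = \mathcal{P}(x,y,w)\cap S$ is periodic of period dividing $2$ precisely when $y$ is $2$-torsion on $C_{ns}$ relative to $x$, i.e.\ (using Proposition~\ref{prop:txty-period-2}) precisely when $x\circ x = y\circ y$, equivalently $x\circ x \in C_x \cap C_y$. So the strategy is: identify the point $q := x\circ x$ (the third intersection of the tangent line $T_x(C)$, inside the plane, with $C$ — but note this only makes sense once we fix the plane), and instead work intrinsically in $\mathbb{P}^3$.

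First I would reformulate the condition geometrically in $\mathbb{P}^3$. The plane $\mathcal{P}(x,y,w)$ contains the fixed line $L = \mathcal{L}(x,y)$, so the pencil of candidate fibers is parametrized by $\mathbb{P}^1$. For a plane $H$ in this pencil with $C = H \cap S$, the tangent line to $C$ at $x$ is $T_x(S) \cap H$, and its third intersection with $S$ (equivalently with $C$) is $x \circ x$ computed in $H$; but the key observation is that the tangent plane $T_x(S)$ is fixed, so as $H$ varies the tangent line $T_x(S)\cap H$ varies over the pencil of lines in $T_x(S)$ through $x$. The condition "$x\circ x = y \circ y$" then says that the point $y\circ y \in C_y = T_y(S)\cap S$ — which likewise lies on the tangent line $T_y(S)\cap H$ — coincides with the corresponding point coming from $x$. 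I would make this precise by intersecting: the locus of points $p \in S$ such that $p$, $x$ are collinear with $x\circ x$ lying on $C_x$ is exactly $C_x = T_x(S)\cap S$; so the requirement is that the plane $H = \mathcal{P}(x,y,p)$ has its $x$-tangent-line passing through a point of $C_y$. Concretely, let $q$ range over $C_x \cap C_y$: by Proposition~\ref{prop:irreducible-cubic-fiber}(b), $C_x$ and $C_y$ share no common component, and both are cubic curves cut out by hyperplane sections of the cubic surface, so $C_x \cap C_y$ is a finite scheme; since $C_x = T_x(S)\cap S$ and $C_y = T_y(S)\cap S$ and $T_x(S) \cap T_y(S)$ is a line (the tangent planes are distinct as $x\neq y$ are a good pair, hence neither lies on the other's tangent plane), we have $C_x \cap C_y = (T_x(S)\cap T_y(S)) \cap S$, the intersection of a line with the cubic surface, which by Bézout is $3$ points counted with multiplicity.

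Then I would argue that each such $q \in C_x\cap C_y$ gives exactly one period-$2$ fiber and conversely. Given $q \in C_x \cap C_y$, set $H_q := \mathcal{P}(x,y,q)$ (this is a genuine plane provided $q \notin L$, which holds because $L$ is not tangent to $S$ so $q \in L\cap S$ would force $q \in \{x,y,z\}$, and one checks $x,y\notin C_x\cap C_y$ directly — if $x \in C_y$ then $y \in C_x$, making $x,y$ a fixed-point configuration, contradicting that the good pair $x,y$ has $t_xt_y$ of infinite order, or more directly $x\in T_y(S)$ would violate the good pair hypothesis via Proposition~\ref{prop:fixed-txty} and Corollary~\ref{cor:txty-fixed-iff-singular}). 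Inside $H_q$, the line $\mathcal{L}(x,q) = T_x(S)\cap H_q$ is the tangent to $C := H_q\cap S$ at $x$ (since $q\in T_x(S)$), so $x \circ x = q$ computed in $H_q$; symmetrically $y\circ y = q$; hence $x\circ x = y\circ y$ and by Proposition~\ref{prop:txty-period-2}, $C$ is a period-$2$ fiber (or a fixed fiber if the period is $1$, which is the multiplicity-$1$ bookkeeping, matching "counted with multiplicity"). Conversely any period-$2$ fiber $C = H\cap S$ satisfies $x\circ x = y\circ y =: q$ with $q \in C_x\cap C_y$ by Proposition~\ref{prop:txty-period-2}, and $H = \mathcal{P}(x,y,q)$ is recovered. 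This sets up a bijection (respecting multiplicities) between period-$2$ fibers and points of the length-$3$ scheme $(T_x(S)\cap T_y(S))\cap S$, giving the count of three. The final claim — that these fibers contain all points of period $2$ — is immediate from Proposition~\ref{prop:periodic-point-on-fiber-txty}, since any period-$2$ point $w$ is noncollinear with $x,y$ (a collinear $w$ would be one of $x,y,z$, and those are not period-$2$: $x,y\in \mathcal{Z}_\infty$ or fixed) and hence lies on $\mathcal{P}(x,y,w)\cap S$, which is then one of these three fibers.

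The main obstacle will be the multiplicity bookkeeping: I must verify that the scheme-theoretic length-$3$ intersection $(T_x(S)\cap T_y(S))\cap S$ matches the notion of "period-$2$ fibers counted with multiplicity" in the right way, including correctly accounting for the degenerate cases where $q$ is a point of higher tangency, where the plane $H_q$ becomes one of the singular fibers (so the fiber has period $1$, i.e.\ $w = q$ is a fixed point rather than genuinely period $2$), or where two of the three intersection points collide. I would handle this by defining the multiplicity of a period-$2$ fiber via the ramification of the elliptic fibration's multiplication-by-$2$ map, or more concretely via the intersection multiplicity of the line $T_x(S)\cap T_y(S)$ with $S$ at $q$, and then checking consistency on a case-by-case basis using the classification of hyperplane sections (smooth cubic / nodal / cuspidal / conic+line / three lines) recalled before Proposition~\ref{prop:irreducible-cubic-fiber}. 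The geometric heart — reducing to a line-meets-cubic-surface count via $C_x\cap C_y = (T_x(S)\cap T_y(S))\cap S$ — is clean; the care needed is entirely in the degenerate strata.
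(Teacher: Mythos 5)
Your proof takes essentially the same approach as the paper's: you identify the period-$2$ fibers with the points of $L\cap S$ where $L=T_x(S)\cap T_y(S)$ is a line, show $L$ is not contained in $S$, and invoke B\'ezout to get exactly three points counted with multiplicity. The paper's proof is terser and deduces $L\not\subset S$ from the absolute irreducibility of $C_x$ (Proposition~\ref{prop:irreducible-cubic-fiber}(a)) rather than from the no-common-component statement (b), but these are interchangeable here. Your extra concern about what "counted with multiplicity" means for periodic fibers is legitimate — the paper's proof does not really pin it down either and simply records the length-$3$ intersection from B\'ezout — but it does not constitute a gap in your argument relative to what the paper establishes; your sketch of matching the scheme length of $L\cap S$ at $q$ with a multiplicity of the corresponding fiber is a reasonable way to make the paper's phrasing precise.
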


\begin{proof}
We show that the period $2$ fibers of $\varphi$ are determined by the intersection of the line $L=T_x(S)\cap{T_y(S)}$ with $S$. The line $L$ cannot lie on $S$, since otherwise it lies on $T_x(S)$ and therefore is contained in $C_x = T_x(S)\cap{S}$; but $C_x$ is absolutely irreducible by Proposition \ref{prop:irreducible-cubic-fiber}. Therefore the line $L$ intersects $S$ at three points (counted with multiplicity). None of these three points are collinear with $x,y$ (otherwise $\mathcal{L}(x,y)$ has a double point at both $y$ and $x$). Each point $w\in{S\cap{L}}$ then determines a fiber of the linear fibration, and on this fiber we get $w=x\circ{x}=y\circ{y}$, since $\mathcal{L}(w,x)\subset{T_x(S)}$ and $\mathcal{L}(w,y)\subset{T_y(S)}$. The result then follows from Proposition \ref{prop:txty-period-2}.
\end{proof}



\begin{prop} \label{prop:props-Z_n(t_xt_y)}
Let $S$ be a smooth cubic surface over a perfect field $k$. Let $x,y$ be a good pair on $S$, and let $C$ be a fiber of the linear fibration through $x,y$. Then: 
\begin{enumerate}[(a)]
\item \label{prop:props-Z_n(t_xt_y):item-a} For any positive integer $n$, the set $C(\bar{k})\cap{\mathcal{Z}_n(t_xt_y)}$ is finite (see Notation \ref{def:extended-indeterminacy-set}).
\item \label{prop:props-Z_n(t_xt_y):item-b} If $C$ is periodic of period $n$, then the set $C(\bar{k})\cap{\mathcal{Z}_\infty(t_xt_y)}$ is finite, and this implies the existence of $\bar{k}$-periodic points outside of $\mathcal{Z}_\infty(t_xt_y)$.
\end{enumerate}
\end{prop}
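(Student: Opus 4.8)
The plan is to exploit the fact that $\mathcal{Z}(t_xt_y)$ is a finite set, together with the invariance of the fiber $C$ under $t_xt_y$, to control $C(\bar k)\cap\mathcal{Z}_n$ fiberwise. First I would identify the indeterminacy locus: since $\mathcal{Z}(t_y)=\{y\}$ and $\mathcal{Z}(t_x)=\{x\}$, the indeterminacy set $\mathcal{Z}(t_xt_y)$ is contained in $\{y\}\cup t_y^{-1}(\{x\}) = \{y\}\cup (C_x\cap C_y$-type locus$)$; in any case $\mathcal{Z}(t_xt_y)$ is a finite set of points, so $C(\bar k)\cap\mathcal{Z}(t_xt_y)$ is finite. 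For part (\ref{prop:props-Z_n(t_xt_y):item-a}), recall that $\mathcal{Z}_n(t_xt_y)=\bigcup_{i=0}^{n-1}(t_xt_y)^{-i}(\mathcal{Z}(t_xt_y))$. Because $C$ is invariant under $t_xt_y$ and $t_xt_y$ restricts to an \emph{automorphism} of the smooth locus $C_{ns}$ (Proposition \ref{prop:cubic-plane-section-invariant}, via Theorem \ref{thm:txty-on-cubic-curve}), the preimage $(t_xt_y)^{-i}(P)\cap C$ of a single point $P$ is again finite on $C$ — one must only be slightly careful at the finitely many singular points of $C$ and at the base points of the fibration, which are themselves a finite set. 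Taking the union over the finitely many points of $\mathcal{Z}(t_xt_y)\cap C$ (plus the finite set of points of $C$ lying over base points or singular points) and over $i=0,\dots,n-1$ yields a finite set, proving (\ref{prop:props-Z_n(t_xt_y):item-a}).

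For part (\ref{prop:props-Z_n(t_xt_y):item-b}), suppose $C$ is periodic of period $n$, so $(t_xt_y)^n$ is the identity on $C$. Then for $P\in\mathcal{Z}(t_xt_y)\cap C$ and any $i\geq 0$, write $i = qn + r$ with $0\leq r<n$; since $(t_xt_y)^{qn}$ acts as the identity on $C$, the set $(t_xt_y)^{-i}(P)\cap C = (t_xt_y)^{-r}(P)\cap C$ only depends on $r$. Hence
\[
C(\bar k)\cap\mathcal{Z}_\infty(t_xt_y) = C(\bar k)\cap\mathcal{Z}_n(t_xt_y),
\]
which is finite by part (\ref{prop:props-Z_n(t_xt_y):item-a}). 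Since $C(\bar k)$ is infinite (it is an absolutely irreducible cubic curve over the algebraically closed field $\bar k$) while $C(\bar k)\cap\mathcal{Z}_\infty(t_xt_y)$ is finite, there exist points $w\in C(\bar k)$ outside $\mathcal{Z}_\infty(t_xt_y)$; moreover we may choose such a $w$ that is also nonsingular on $C$ and noncollinear with $x,y$ (again only finitely many points are excluded). For any such $w$ the periodicity of the fiber gives $(t_xt_y)^n(w)=w$ with $w\notin\mathcal{Z}_\infty(t_xt_y)$, so $w$ is a genuine periodic point in the sense of this article.

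The main obstacle I anticipate is the bookkeeping in part (\ref{prop:props-Z_n(t_xt_y):item-a}): one needs to argue carefully that $(t_xt_y)^{-i}$ restricted to $C$ sends finite sets to finite sets \emph{without} accidentally sweeping out all of $C$. This is where the invariance of $C$ and the fact that $t_xt_y|_{C_{ns}}$ is a translation (in particular a bijection on $C_{ns}$) is essential — it is precisely the mechanism that prevents the pathology mentioned in Notation \ref{def:extended-indeterminacy-set} (where $\mathcal{Z}_n$ of a surface map can be infinite). Once the fiberwise bijectivity on the smooth locus is in hand, everything reduces to the finiteness of $\mathcal{Z}(t_xt_y)$, of the singular locus of $C$, and of the base points of the fibration, and the rest is a periodicity-of-indices argument.
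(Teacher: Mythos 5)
Your proof is correct, but for part~(\ref{prop:props-Z_n(t_xty):item-a}) it goes by a genuinely different (and arguably cleaner) route than the paper's. The paper proceeds by induction on $n$: it writes
$C\cap\mathcal{Z}_n(f) = (C\cap\mathcal{Z}(f)) \cup (C\cap f^{-1}[\mathcal{Z}_{n-1}(f)])$,
notes $C\subset f^{-1}[C]$ so the second set sits inside $C\cap f^{-1}[C\cap\mathcal{Z}_{n-1}(f)]$, and then handles the finitely many points of $C\cap\mathcal{Z}_{n-1}(f)$ case by case. The only two points whose $f$-preimage (on the surface) is a curve are $z$ and $x$, with $f^{-1}(\{z\})=C_y$ and $f^{-1}(\{x\})=t_y^{-1}(C_x)$; the paper then checks that these preimage curves share no component with $C$, so that intersecting with $C$ again gives a finite set. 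Your approach sidesteps the induction and the explicit identification of the preimage curves by observing that, since $C$ is $f$-invariant, any $w\in C$ with $f^i(w)\in\mathcal{Z}(f)$ lies in the $i$-th preimage of $\mathcal{Z}(f)\cap C$ under the \emph{restriction} $f|_C$, which is a nonconstant rational self-map of the curve $C$ and therefore has finite fibers. That observation replaces the paper's component-by-component bookkeeping by a one-line dimensional fact about curves. What the paper's argument buys instead is explicit geometric information (exactly which curves on $S$ one must avoid), which is what lets the author later bound $|C_p(\kappa_p)\cap\mathcal{Z}_\infty(\varphi_p)|$ uniformly in $p$ in Proposition~\ref{prop:srp-on-cubic-surface-cond1}; your version proves finiteness but not a uniform bound. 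For part~(\ref{prop:props-Z_n(t_xty):item-b}) your periodicity-of-indices argument ($i=qn+r$, reduce modulo $n$) is the same idea as the paper's one-sentence argument that any $w\in C\setminus\mathcal{Z}_n$ is $n$-periodic and hence never meets $\mathcal{Z}(f)$.

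One small inaccuracy worth flagging: $\mathcal{Z}(t_xt_y)=\{y\}\cup t_y^{-1}(\{x\})=\{y,z\}$ where $z$ is the third point of $\mathcal{L}(x,y)\cap S$; it is not a ``$C_x\cap C_y$-type'' locus (you seem to be conflating set-theoretic preimage of $x$ under $t_y$, which is the single point $z$ since $x$ is a good point, with the preimage of $y$ under $t_y$, which is the curve $C_y$). This does not affect your argument since you only use finiteness of $\mathcal{Z}(t_xt_y)$, but the paper's explicit $\mathcal{Z}(f)=\{y,z\}$ is what makes its induction base case clean.
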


\begin{proof} \mbox{}
To make notations simpler, we identify all algebraic sets with their underlying set of $\bar{k}$-points. 
Denote $f=t_xt_y$, and let $z$ be the third point in $\mathcal{L}(x,y)\cap{S}$.
We prove the proposition by induction on $n$. For $n=1$ we have $\mathcal{Z}_1(f)=\mathcal{Z}(f)=\{y,z\}$, so the assertion is true. 
Let $n>1$, and assume that the statement is true for any $m<n$. For $n\geq{2}$ we have
$\mathcal{Z}_n(f)=\mathcal{Z}(f)\cup f^{-1}\left[\mathcal{Z}_{n-1}(f)\right],$
so that 
$$C\cap\mathcal{Z}_n(f) = (C\cap\mathcal{Z}(f)) \cup (C \cap f^{-1}\left[\mathcal{Z}_{n-1}(f)\right]).$$
The first set in the union is finite, so it remains to prove that $C \cap f^{-1}\left[\mathcal{Z}_{n-1}(f)\right]$ is finite. It is easy to see that $C \subset f^{-1}\left[C\right]$, so that
$$C \cap f^{-1}\left[\mathcal{Z}_{n-1}(f)\right] \subset C\cap f^{-1}\left[C\cap \mathcal{Z}_{n-1}(f)\right].$$
The set $C\cap \mathcal{Z}_{n-1}(f)$ is finite by the induction hypothesis, so
$$C\cap \mathcal{Z}_{n-1}(f) \subseteq \{x,y,z,A_1,...,A_K\},$$
where $A_1,...,A_K$ are points in $C(\bar{k})\setminus\{x,y,z\}$. The inverse image of $\{y,A_1,\ldots,A_K\}$ under $f$ is finite (as can be checked readily from the definition of $t_x$ and $t_y$), so it remains to show that $C\cap{f^{-1}[\{x,z\}]}$ is finite.  Now $f^{-1}[\{z\}] = C_y$, which is an irreducible hyperplane section singular at $y$ by Proposition \ref{prop:irreducible-cubic-fiber}, and therefore has no common components with $C$ (the curve $C$ is irreducible and cannot have a singularity at $x,y,z$ by Proposition \ref{prop:plane-section-through-L_x_y-points-on-line-are-nonsingular}), so that their intersection is finite. 
Finally, $f^{-1}[\{x\}] = t_y^{-1}(C_x)$. As before $C \subset t_y^{-1}\left[C\right]$, so that $$C\cap t_y^{-1}\left[C_x\right] \subset t_y^{-1}\left[C\cap C_x\right].$$ The set $C\cap{C_x}$ is finite, so $C\cap{C_x} \subseteq \{x, B_1,...,B_M\},$
where $B_1,...,B_M$ are points in $C(\bar{k})\setminus\{x,y,z\}$ (The curve $C_x$ does not contain $y$ and $z$, since otherwise $x$ is a double point of $\mathcal{L}(x,y)\cap{S}$, which is impossible, since $x,y$ and $z$ are distinct). The inverse image of $\{x, B_1,...,B_M\}$ under $t_y$ is finite. This proves \textit{(\ref{prop:props-Z_n(t_xt_y):item-a})}.

To prove \textit{(\ref{prop:props-Z_n(t_xt_y):item-b})}, we note that any point of $C$ not in $\mathcal{Z}_n(t_xt_y)$ must be periodic of period $n$, and therefore cannot lie in $\mathcal{Z}_\infty(t_xt_y)$; but $C\cap\mathcal{Z}_n(t_xt_y)$ is finite by \textit{(\ref{prop:props-Z_n(t_xt_y):item-a})}, so that $C\cap\mathcal{Z}_\infty(t_xt_y)$ is finite as well.
\end{proof}



\section{Division polynomials associated with linear fibration}

We have seen in Section \ref{section:dynamics-geiser} that finding periodic points of $t_xt_y$ for a good pair $x,y$ on a smooth cubic surface $S$ defined over a number field $K$, is equivalent to studying periodicity on the fibers of the linear fibration defined by the points $x,y$ (Proposition~\ref{prop:geiser-fibration-stabilized}). We have also seen that for a fiber to have periodic points of exact period $n$, the fiber itself must be periodic of period $n$, and this is equivalent to $y$ being of order $n$ in the group structure induced by choosing $x$ as the unit element (Proposition~\ref{prop:periodic-point-on-fiber-txty}). We want to find all the fibers of the linear fibration that are periodic of finite period. In order to do so we employ \textit{division polynomials} of elliptic curves. We first recall the definition and basic properties of these polynomials.

\begin{defn} \label{def:division-polynomials}
Given an elliptic curve $E$ in Weierstrass form $y^2 = x^3 + Ax+B,$ we associate to it the \emph{division polynomials} $\psi_n, n\geq{0}$ in $\mathbb{Z}[x,y,A,B]$:
\begin{eqnarray*} 
\psi_0 &=& 0 , \qquad \psi_1 = 1, \qquad \psi_2 = 2y, \\
\psi_3 &=& 3x^4 +6Ax^2 +12Bx - A^2 ,\\
\psi_4 &=& 4y(x^6+5Ax^4+20Bx^3 - 5A^2x^2 - 4ABx - 8B^2 - A^3) ,\\
\psi_{2m+1} &=& \psi_{m+2}\psi_m^3 - \psi_{m-1}\psi_m^3 + 1, \quad \text{ for } m \geq 2 ,\\
\psi_{2m} &=& (2y)^{-1}\psi_m(\psi_{m+2}\psi_{m-1}^2 - \psi_{m-2}\psi_{m+1}^2), \quad \text{ for } m \geq 3
\end{eqnarray*}
\end{defn}

\begin{thm} Let $E$ be an elliptic curve defined over a number field $K$. Then the division polynomials have the following properties ($E[n]$ is the set of points in $E(\bar{K})$ with order dividing $n$):
\begin{enumerate}[(a)]
\item $\psi_{2n+1}, y^{-1}\psi_{2n}$ are polynomials in $\mathbb{Z}[x,A,B]$;
\item the roots of $\psi_{2n+1}$ are the $x$-coordinates of the points in $E[2n+1] \setminus \{\mathcal{O}\}$;
\item the roots of $y^{-1}\psi_{2n}$ are the $x$-coordinates of the points in $E[2n] \setminus E[2]$.
\end{enumerate}
\end{thm}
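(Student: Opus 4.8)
This is classical; see for instance Silverman \cite[Chapter III.2 and Exercise 3.7]{book:silverman2009} or any standard reference on elliptic curves. I sketch the argument.

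Part (a) is proved by induction on $n$, establishing the sharper statement that $\psi_{2n+1}\in\mathbb{Z}[x,A,B]$ and $\psi_{2n}\in 2y\cdot\mathbb{Z}[x,A,B]$. The small cases $\psi_0,\dots,\psi_4$ are read off from Definition \ref{def:division-polynomials}. In the inductive step one inserts the inductive forms of the relevant $\psi_k$ into the recursion relations and repeatedly uses the Weierstrass relation $y^2=x^3+Ax+B$ to clear even powers of $y$; the content of the argument is just that the total exponent of $y$ appearing in each monomial of $\psi_{2m+1}$ is even and in each monomial of $\psi_{2m}$ is odd, uniformly in $m$, so that after substitution one lands in $\mathbb{Z}[x,A,B]$, respectively in $2y\cdot\mathbb{Z}[x,A,B]$.

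For parts (b) and (c) I would use the description of the multiplication-by-$m$ endomorphism of $E$ through the division polynomials: with $\phi_m=x\psi_m^2-\psi_{m+1}\psi_{m-1}$ and a further polynomial $\omega_m$, one has, in affine coordinates,
$$[m](x,y)=\left(\frac{\phi_m(x,y)}{\psi_m(x,y)^2},\ \frac{\omega_m(x,y)}{\psi_m(x,y)^3}\right),$$
with $\phi_m$ and $\psi_m^2$ coprime (here $\operatorname{char}K=0$, so $[m]$ is separable of degree $m^2$). Hence a point $P=(x_0,y_0)\neq\mathcal{O}$ lies in $E[m]$ if and only if $\psi_m(P)=0$. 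If $m=2n+1$ is odd then $\psi_m$ is a polynomial in $x$ alone and $E[2]\cap E[2n+1]=\{\mathcal{O}\}$, so its roots are exactly the $x$-coordinates of the points of $E[2n+1]\setminus\{\mathcal{O}\}$; a degree count, $\deg\psi_{2n+1}=2n^2+2n=\tfrac12\bigl((2n+1)^2-1\bigr)$, combined with separability of $[m]$, confirms that each such $x$-coordinate occurs and with no loss. If $m=2n$ is even, write $\psi_{2n}=2y\cdot g(x)$ with $g\in\mathbb{Z}[x,A,B]$; the factor $2y$ vanishes precisely on $E[2]\setminus\{\mathcal{O}\}$, so the $2n$-torsion lying outside $E[2]$ is cut out by $g=y^{-1}\psi_{2n}$, and $\deg g=2n^2-2=\tfrac12\bigl((2n)^2-4\bigr)$ matches the number of such $x$-coordinates.

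The work is entirely in the bookkeeping: carrying the parity-of-$y$-exponent invariant through all the cases of the recursion in part (a), and verifying the coprimality of $\phi_m$ and $\psi_m^2$ needed in parts (b) and (c) so that the zeros of the denominator of $[m]$ really are the nontrivial $m$-torsion points. Both are routine in characteristic $0$ but should be handled with some care.
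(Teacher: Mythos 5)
Your proposal is correct and takes essentially the same approach as the paper: the paper's entire proof is a citation to Washington, and your sketch is the standard argument found in that reference (parity-of-$y$-exponent induction for (a), the $[m](x,y)=\bigl(\phi_m/\psi_m^2,\omega_m/\psi_m^3\bigr)$ formula together with coprimality and separability for (b), (c)); you simply spell out what the cited source proves.
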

\begin{proof}
See Washington \cite[Chapter 3, Section 2]{book:washington2008}.
\end{proof}

\begin{rem}
To make the notations easier, we replace $\psi_n$ for even $n$ with $y^{-1}\psi_n$.
\end{rem}

\begin{prop} \label{prop:periodic-fibers-using-division-polynomials}
Let $S$ be a smooth cubic surface defined over a number field $K$, let $x,y\in{S(K)}$ be a good pair, and denote by $\pi:S\rightarrow\mathbb{P}^1$ the linear fibration associated with the good pair $x,y$.
There exist polynomials $\gamma_n(t)$ in $K[t]$, for $n\geq{3}$, whose roots in $\bar{K}$ lying outside a finite set $\mathcal{B}\subset\mathbb{P}^1$ correspond to fibers of $\pi$ that are $t_xt_y$-periodic of period $\geq{3}$ and dividing $n$.
\end{prop}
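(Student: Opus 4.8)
The plan is to put the linear fibration into a Weierstrass family and then take the $\gamma_n$ to be (essentially) the division polynomials of that family, with $t$ the parameter on the base $\mathbb{P}^1$. First I would fix the base coordinate: choose linear forms $f,g$ cutting out two planes through $L=\mathcal{L}(x,y)$, so that $\pi = (f:g)$, and for a generic value $t$ the fiber $C_t = \{g - tf = 0\}\cap S$ is an absolutely irreducible plane cubic (Proposition \ref{prop:irreducible-cubic-fiber}) passing through the three collinear points $x,y,z$ of $\mathcal{L}(x,y)\cap S$, all of which are nonsingular on every fiber (Proposition \ref{prop:plane-section-through-L_x_y-points-on-line-are-nonsingular}). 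Since $x\in C_t(K)$ for all $t$, each good fiber is an elliptic curve with origin $x$, and over the function field $K(t)$ we obtain a single elliptic curve $\mathcal{E}/K(t)$; a standard change of coordinates (a $K(t)$-linear projective transformation sending $x$ to the point at infinity and straightening the group law) puts $\mathcal{E}$ into Weierstrass form $Y^2 = X^3 + A(t)X + B(t)$ with $A,B\in K(t)$, and after clearing denominators we may take $A,B\in K[t]$. The exceptional behavior of this coordinate change — the finitely many $t$ where it degenerates, together with the finitely many $t$ where $C_t$ is singular or reducible (at most $12$ by Corollary \ref{cor:12-fixed-points}) and the points of the base where $f=0$ — is collected into the finite set $\mathcal{B}\subset\mathbb{P}^1$.

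Next I would transport the periodicity criterion. By Proposition \ref{prop:periodic-point-on-fiber-txty}, the fiber $C_t$ is $t_xt_y$-periodic of period dividing $n$ (and $\geq 3$) exactly when the image $\bar{y}(t)$ of the point $y$ in $\mathcal{E}$ is a point of order dividing $n$ in $\mathcal{E}(t)$ — here $\bar{y}(t)$ is a $K(t)$-rational (in fact $K[t]$-regular, away from $\mathcal{B}$) section of the Weierstrass family, whose $X$-coordinate is a rational function $x_y(t)\in K(t)$, regular outside $\mathcal{B}$. Recall that a point $P$ on $Y^2=X^3+AX+B$ has order dividing $n$ iff $\psi_n$ vanishes at $P$; for $n\geq 3$ and $P\neq \mathcal{O}$ this depends only on the $X$-coordinate, via the polynomial $\tilde\psi_n(X)=\psi_n$ (for $n$ odd) or $y^{-1}\psi_n$ (for $n$ even) in $\mathbb{Z}[X,A,B]$. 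So I define
\[
\gamma_n(t) \;=\; \text{numerator of } \tilde\psi_n\bigl(x_y(t);\,A(t),B(t)\bigr),
\]
i.e.\ substitute $X=x_y(t)$, $A=A(t)$, $B=B(t)$ into the universal division polynomial and clear denominators to land in $K[t]$. (One should also handle the edge case where $\bar y(t)$ lies in $\mathcal{E}[2]$ for some $t$, but those $t$ give period dividing $2$, hence either land in $\mathcal{B}$ or are irrelevant since we only claim period $\geq 3$.) For any root $t_0\notin\mathcal{B}$ of $\gamma_n$, the point $\bar y(t_0)$ has finite order dividing $n$ in the genuine elliptic curve $C_{t_0}$, so by Proposition \ref{prop:periodic-point-on-fiber-txty} the fiber $C_{t_0}$ is $t_xt_y$-periodic of period dividing $n$; discarding the (at most two) values where the order is $1$ or $2$ — again absorbable into $\mathcal{B}$ — gives period in $\{3,\dots,n\}$ dividing $n$, as claimed. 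Conversely any such periodic fiber outside $\mathcal{B}$ forces $\bar y(t_0)\in \mathcal{E}[n]$ with the right $X$-coordinate, hence $\gamma_n(t_0)=0$.

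The main obstacle I expect is the Weierstrass normalization step done \emph{uniformly in $t$} — i.e.\ producing the family $Y^2=X^3+A(t)X+B(t)$ together with the explicit section $\bar y(t)$ over the function field $K(t)$, and bookkeeping precisely which finitely many $t$ must go into $\mathcal{B}$ (discriminant zeros, the planes through $L$ giving reducible sections, the pole/zero loci of the coordinate change and of $x_y(t)$). The rest — substituting into the division polynomials and clearing denominators — is formal, and the only subtlety is the harmless two-torsion and low-order exceptional values, which the finite set $\mathcal{B}$ is designed to absorb.
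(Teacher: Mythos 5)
Your proposal matches the paper's proof closely: Weierstrass-normalize the cubic pencil over $K(t)$ (the paper cites Shioda's Weierstrass transformation), take the section induced by $y$, substitute its $u$-coordinate into the universal division polynomials, clear denominators, and absorb the finitely many degenerate parameters into $\mathcal{B}$. The one step worth making explicit, which the paper handles via Theorem~\ref{thm:manin-infinite-order}, is that $y$ has infinite order on the generic fiber, so that each $\gamma_n$ is not identically zero --- otherwise its ``roots'' would be all of $\mathbb{P}^1$ and the correspondence would be vacuous.
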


\begin{proof} 

We can ensure that the fiber at infinity is non-periodic and nonsingular. Outside this fiber $\pi$ induces a cubic pencil with parameter $t$, whose generic fiber is a smooth cubic curve $C$ in the projective plane $\mathbb{P}^2$ over the function field $K(t)$. The points $x,y$ induce two rational points (which we still denote by $x,y$) on the cubic curve $C$. We choose $x$ to be the unit element of the cubic curve $C$, which induces an elliptic curve group structure on $C$. We note that it is impossible for $y$ on $C$ to be of finite order when $x$ is chosen as unit element, since otherwise $t_xt_y$ is of finite order in $Bir(X)$, contradicting Theorem~\ref{thm:manin-infinite-order}.

We now use a Weierstrass transformation (see Shioda \cite[Section 2]{article:shioda1995}) to map our elliptic curve $(C, x)$ to an isomorphic elliptic curve in Weierstrass form over $K(t)$ $$E: v^2=u^3+A(t)u+B(t).$$ We denote the Weierstrass transformation by $\omega: (C, x) \rightarrow (E,\mathcal{O}).$ The point $y$ is mapped to a rational point on $E$, and we still denote this point by $y$. We can ensure that the Weierstrass form $E$ has coefficients in $\mathcal{O}_K[t]$: This is done by transforming $E$ to a \emph{minimal Weierstrass form} (cf.\ Silverman \cite[Chapter~VII, Section 1]{book:silverman2009}), i.e.\ an isomorphic copy of $E$ such that the valuation of $A$ and $B$ is nonnegative and minimal (in its isomorphism class) at all places of $K(t)$; then it is possible to get rid of the denominators so that all the coefficients of $A(t)$ and $B(t)$ are in $\mathcal{O}_K$.

There exists an open subset $U\subseteq\mathbb{P}^1$ such that for each $t\in{U}$, the specialization of the Weierstrass transformation $\omega$ to the fiber over a specific $t$ will remain an isomorphism of irreducible cubic curves. We include the complement of $U$ in $\mathcal{B}$. 

We now have an infinite sequence of division polynomials $\psi_n\in{\mathcal{O}_K[t][u]}, n\geq{3},$ of the elliptic curve $E/K(t)$. We evaluate these polynomials at the $u$-coordinate of the point $y$, and get an element $\tilde{\gamma}_n(t)$ in $K(t)$. The denominator of $\tilde{\gamma}_n$ depends only on $y\in{E}$; thus there are finitely many values of $t$ at which $\tilde{\gamma}_n(t)$ might have poles, which we include in $\mathcal{B}$.  We now define $\gamma_n$ to be the numerator of $\tilde{\gamma}_n$, for $n\geq{3}$.

It is clear that the fibers of $\pi$ lying over the roots of $\gamma_n(t)$ not in $\mathcal{B}$, are  fibers where $y$ is of finite order $\neq{2}$ and dividing $n$. Thus by Proposition \ref{prop:periodic-point-on-fiber-txty} these fibers are periodic of period $\neq{2}$ and dividing $n$.
\end{proof}

\begin{cor} \label{cor:periodic-fibers-using-division-polynomials}
Let $S/K,x,y$ and $\pi$ be as in Proposition \ref{prop:periodic-fibers-using-division-polynomials}.
There exist polynomials $\Psi_n(t)$ in $K[t]$, for $n\geq{3}$, whose roots in $\bar{K}$ correspond to all the fibers of $\pi$ that are periodic under the birational map $t_xt_y$, of period $\neq{2}$ and dividing $n$.
\end{cor}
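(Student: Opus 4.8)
The plan is to upgrade the polynomials $\gamma_n(t)$ of Proposition~\ref{prop:periodic-fibers-using-division-polynomials} to polynomials $\Psi_n(t)$ that genuinely see \emph{all} finite-order fibers, by dealing with the finite exceptional set $\mathcal{B}\subset\mathbb{P}^1$ by hand. The point is that $\mathcal{B}$ consists of finitely many parameter values $t_0$: the fiber at infinity, the values where the specialized Weierstrass transformation $\omega$ fails to be an isomorphism, and the values where $\tilde\gamma_n$ has a pole (the latter being a subset of the former together with the zeros of a denominator depending only on $y\in E$). For each such $t_0$, I would examine the fiber $C_{t_0}$ directly: by Proposition~\ref{prop:irreducible-cubic-fiber} it is an absolutely irreducible plane cubic, so by Theorem~\ref{thm:txty-on-cubic-curve} the restriction of $t_xt_y$ to it is a group translation on $C_{ns}$, and hence $C_{t_0}$ is periodic of some finite period $n(t_0)$ (possibly $\infty$) determined by the order of $y$ in the group on $(C_{t_0})_{ns}$ with $x$ as unit.

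Concretely, I would first run through $\mathcal{B}$ and discard the finitely many $t_0$ at which the fiber is \emph{not} periodic, or is periodic of period $2$ (these are already handled by Corollary~\ref{cor:curves-of-period-2}), or has period not dividing $n$; what remains is a finite list $t_1,\dots,t_r$ of parameters in $\mathcal{B}$ whose fibers are periodic of period $\neq 2$ and dividing $n$. Then I would set
$$\Psi_n(t) = \gamma_n(t)\cdot\prod_{i=1}^{r}(t-t_i)^{e_i},$$
choosing the exponents $e_i$ large enough (a single factor suffices, but one may match multiplicities) so that the root set of $\Psi_n$ over $\bar K$ is exactly the union of the roots of $\gamma_n$ outside $\mathcal{B}$ and the points $t_1,\dots,t_r$. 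One must check that no root of $\gamma_n$ lying inside $\mathcal{B}$ is spuriously retained; since the roots of $\gamma_n$ inside $\mathcal{B}$ could correspond to fibers that are \emph{not} periodic of the right kind, the cleanest route is to instead clear those bad roots: divide $\gamma_n$ by the appropriate linear factors before multiplying back in the good ones from $\mathcal{B}$. Either way the construction is finite and produces $\Psi_n\in K[t]$, since each $t_i$ is an element of $\mathbb{P}^1(\bar K)$ and, being a point of $\mathcal{B}$ which is defined over $K$ (the whole construction of $\pi$, $\omega$, and the denominators is over $K$), the bad locus is $K$-rational, so $\prod(t-t_i)^{e_i}\in K[t]$.

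The correctness statement to verify is then: a point $t^*\in\mathbb{P}^1(\bar K)$ is a root of $\Psi_n$ if and only if the fiber $\pi^{-1}(t^*)$ is $t_xt_y$-periodic of period $\neq 2$ dividing $n$. For $t^*\notin\mathcal{B}$ this is exactly Proposition~\ref{prop:periodic-fibers-using-division-polynomials}. For $t^*\in\mathcal{B}$ it holds by the case-by-case inspection above, together with the observation that, because $t_xt_y$ has infinite order in $\mathrm{Bir}(S)$ by Theorem~\ref{thm:manin-infinite-order}, only finitely many fibers are periodic at all, so the list $t_1,\dots,t_r$ really is complete within $\mathcal{B}$.

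\textbf{Main obstacle.} The routine part is the algebra of multiplying/dividing polynomials; the delicate part is the analysis of the fibers over $\mathcal{B}$, where the Weierstrass model degenerates and the division-polynomial machinery no longer applies. One has to argue intrinsically on each such plane cubic $C_{t_0}$ — possibly singular (nodal or cuspidal) — that the order of $y$ in $(C_{t_0})_{ns}$ is still a well-defined finite-or-infinite invariant detecting periodicity, using Theorem~\ref{thm:txty-on-cubic-curve} and the fact (Proposition~\ref{prop:plane-section-through-L_x_y-points-on-line-are-nonsingular}) that $x,y$ are nonsingular on every fiber of $\pi$, so that the group law on $(C_{t_0})_{ns}$ with unit $x$ makes sense and contains $y$. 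Once that is in hand, assembling $\Psi_n$ is immediate.
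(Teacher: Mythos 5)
Your overall strategy is the same as the paper's: start from the $\gamma_n(t)$ of Proposition~\ref{prop:periodic-fibers-using-division-polynomials}, then adjust for the finite exceptional set $\mathcal{B}$ by inspecting each fiber over $\mathcal{B}$ directly (using Proposition~\ref{prop:irreducible-cubic-fiber}, Theorem~\ref{thm:txty-on-cubic-curve}, and the nonsingularity of $x,y$ on each fiber) and then multiplying in or dividing out the corresponding linear factors. That much is exactly what the paper does.

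However, your justification that $\Psi_n\in K[t]$ has a genuine gap. You assert that ``the bad locus is $K$-rational, so $\prod(t-t_i)^{e_i}\in K[t]$.'' But $\mathcal{B}$ being defined over $K$ only means it is Galois-stable as a \emph{set}; its individual points need not lie in $\mathbb{P}^1(K)$, so the product over an arbitrary subset of $\mathcal{B}$ is not automatically in $K[t]$. Worse, you are not multiplying over all of $\mathcal{B}$ but over the subset $\{t_1,\dots,t_r\}\subseteq\mathcal{B}$ of parameters whose fibers are periodic of the right kind (and, separately, dividing out the subset of roots of $\gamma_n$ in $\mathcal{B}$ that correspond to non-periodic or wrong-period fibers). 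For each of these sub-products to lie in $K[t]$, the relevant subset of $\mathcal{B}$ must itself be Galois-stable. The paper supplies exactly the missing observation: if $a$ and $a'$ are Galois-conjugate parameters, then the fibers over them are conjugate cubic curves and the order of $y$ (with unit $x$) in $C_a$ equals its order in $C_{a'}$, so the set of $a\in\mathcal{B}$ with $y$ of a given finite order is a union of full Galois orbits. With that in hand, your construction goes through; without it, the claim that $\Psi_n$ has coefficients in $K$ is unjustified.
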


\begin{proof}
We only need to check the order of $y$ on the finite number of fibers over the points in $\mathcal{B}$. If a fiber over a point $a\in\mathcal{B}$ is non-periodic, then we factor out $a$ as a root from the polynomials $\gamma_n(t)$. If it is periodic, then we need to make sure it appears as a root of the polynomials $\gamma_n(t)$. It remains to prove that the resulting polynomials have coefficients in $K$. This is true because if we have a point $a$ on whose corresponding fiber we have $y$ of finite order $n$, then the order of $y$ on the fibers over the conjugates of $a$ will be the same, and therefore they will be roots of the same polynomials.
\end{proof}

\begin{cor} \label{cor:main-corollary-A}
Under the assumptions of the last corollary, the $\bar{K}$-periodic points of $t_xt_y$ are Zariski-dense in $S(\bar{K})$.
\end{cor}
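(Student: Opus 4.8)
The plan is to combine the two results that immediately precede the statement. By Corollary~\ref{cor:periodic-fibers-using-division-polynomials}, for every $n\geq 3$ the $\bar K$-roots of $\Psi_n(t)$ are exactly the points of $\mathbb{P}^1$ over which the fiber of $\pi$ is $t_xt_y$-periodic of period $\neq 2$ and dividing $n$. If $C$ is any such periodic fiber, then $C(\bar K)\cap\mathcal{Z}_\infty(t_xt_y)$ is finite by Proposition~\ref{prop:props-Z_n(t_xt_y)}\,\textit{(\ref{prop:props-Z_n(t_xt_y):item-b})}, and every point of $C$ outside this finite set is a periodic point of $t_xt_y$ lying outside $\mathcal{Z}_\infty(t_xt_y)$; hence the periodic points are Zariski-dense in $C$. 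Since $S$ is an irreducible surface, any subset that is not Zariski-dense is contained in a finite union of curves, and distinct $t\in\mathbb{P}^1(\bar K)$ give distinct fibers of $\pi$. Therefore it suffices to exhibit \emph{infinitely many} periodic fibers $C_1,C_2,\dots$: no irreducible curve in $S$ contains infinitely many of the $C_i$, so their union is not contained in any finite union of curves, and the periodic points, being dense in each $C_i$, are then Zariski-dense in $S$. Equivalently, I must show that the set $\bigcup_{n\geq 3}\{\,t\in\mathbb{P}^1(\bar K):\Psi_n(t)=0\,\}$ is infinite, i.e.\ that the section $y$ of the rational elliptic surface attached to $E/K(t)$ (with $x$ as origin) specializes to a torsion point of the fiber $E_t$ for infinitely many $t$.

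I would establish this directly from the division polynomials. By Theorem~\ref{thm:manin-infinite-order} the map $t_xt_y$ has infinite order, so (as observed in the proof of Proposition~\ref{prop:periodic-fibers-using-division-polynomials}) $y$ is a non-torsion section. Recall that $\gamma_n$ is the numerator of $\psi_n$ evaluated at the $u$-coordinate of $y$ and at the Weierstrass data $A(t),B(t)\in\mathcal{O}_K[t]$ of the rational model; since the fibration is a genuine elliptic surface, $A$ and $B$ are not both constant, and because $\deg_u\psi_n\to\infty$ one checks that $\deg_t\gamma_n\to\infty$. On the other hand, suppose the set of torsion specializations of $y$ were finite, say $\{t_1,\dots,t_r\}$ with $y_{t_i}$ of exact order $d_i$. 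Discarding the finitely many $t_i$ on singular fibers or on poles of the $u$-coordinate of $y$, the remaining $t_i$ are good-reduction points; there the torsion schemes $\mathcal{E}[n]$ are étale and the nonzero roots of $\psi_n$ are simple, so the multiplicity of $t_i$ as a root of $\gamma_n$ is independent of $n$ (it is the order of contact at $t_i$ between the section $y$ and the finite-order branch it meets there). These multiplicities being bounded, $\deg_t\gamma_{nD}$ with $D=\operatorname{lcm}(d_1,\dots,d_r)$ would be bounded as $n\to\infty$, contradicting $\deg_t\gamma_n\to\infty$. Hence there are infinitely many periodic fibers, proving the corollary.

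The step I expect to be the main obstacle is precisely the assertion that $y$ has infinitely many torsion specializations, and within it the two technical inputs: $\deg_t\gamma_n\to\infty$ (which uses that the fibration is a rational elliptic surface rather than a product, so that $A(t),B(t)$ are non-constant and the division polynomials genuinely grow in degree along $y$) and the boundedness of the root multiplicities (which uses étaleness of the torsion over the good-reduction locus together with simplicity of the roots of the $\psi_n$). A cleaner alternative, at the cost of invoking machinery not developed elsewhere in the paper, is to compute $\bigl([n]y\bigr)\cdot O$ on the rational elliptic surface via the Shioda height pairing: from $\widehat h\bigl([n]y\bigr)=n^{2}\widehat h(y)$ with $\widehat h(y)>0$, and boundedness of the local correction terms, one gets $\bigl([n]y\bigr)\cdot O\to\infty$, which already forces $y_t\in E_t[n]$ for infinitely many $t$.
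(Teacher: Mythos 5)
Your proof follows the same overall reduction as the paper's: exhibit infinitely many $t_xt_y$-periodic fibers over $\bar K$, invoke Proposition~\ref{prop:props-Z_n(t_xt_y)} to get periodic points outside $\mathcal{Z}_\infty(t_xt_y)$ that are Zariski-dense in each such fiber, and conclude density in $S$ because no finite union of curves can contain infinitely many distinct irreducible fibers. Where you differ is in justifying ``infinitely many periodic fibers.'' The paper's one-line justification is that $\Psi_p$ and $\Psi_q$ have no common roots for distinct primes $p$ and $q$; this correctly shows that distinct primes yield distinct periodic fibers, but it tacitly assumes that infinitely many of the $\Psi_p$ are non-constant (equivalently, that the non-torsion section $y$ of the rational elliptic surface has infinitely many torsion specializations). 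You make that input explicit: $\gamma_n\not\equiv 0$ since $y$ is non-torsion, $\deg_t\gamma_n\to\infty$, and at any fixed good-reduction torsion specialization the order of vanishing of $\gamma_n$ is bounded independently of $n$ (\'etaleness of the torsion locus plus simplicity of the roots of $\psi_n$), so finitely many torsion specializations would bound $\deg_t\gamma_n$, a contradiction; your height-pairing alternative, using $\widehat h([n]y)=n^{2}\widehat h(y)>0$ to force $([n]y)\cdot O\to\infty$, reaches the same conclusion more quickly at the cost of importing the Shioda lattice. Both of your routes are correct and are more self-contained than the paper's terse appeal to coprimality, in that they actually establish the non-vacuousness of the division polynomials that the paper treats as understood.
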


\begin{proof}
By Corollary \ref{cor:periodic-fibers-using-division-polynomials}, there are infinitely many periodic fibers of $t_xt_y$ over $\bar{K}$ (the polynomials $\Psi_p, \Psi_q$ have no common roots for distinct primes $p$ and $q$). By Proposition \ref{prop:props-Z_n(t_xt_y)}, these fibers must contain $\bar{K}$-rational points lying outside of $\mathcal{Z}_\infty(t_xt_y)$.
\end{proof}

%
%
%

\begin{defn}
We call the polynomials $\Psi_n, n\geq{3}$ in the last corollary the \emph{division polynomials} of the smooth cubic surface $S$ with respect to the good pair $x,y$ (this definition is nonstandard).
We now define the 1st and 2nd division polynomials: 
\begin{enumerate}
\item $\Psi_1(t)$ is defined to be the discriminant of Weierstrass form $E/K(t)$ from the proof of Proposition~\ref{prop:periodic-fibers-using-division-polynomials}.
\item  $\Psi_2(t)$ is defined to be the numerator of the $v$ coordinate of the point $y$ on $E/K(t)$. 
\end{enumerate}
\end{defn}

\begin{prop} \label{prop:psi1-psi2}
Let $S$ be a smooth cubic surface defined over a number field $K$, and let $x,y\in{S(K)}$ be a good pair.
\begin{enumerate}[(a)]
\item The roots of $\Psi_1$ in $\bar{K}$ correspond to the fibers of $\pi$ that contain fixed points of $t_xt_y$. This polynomial is of degree at most $12$.
\item The roots of $\Psi_2$ in $\bar{K}$ correspond to the fibers of $\pi$ that are periodic of period $2$. This polynomial is of degree at most $3$.
\end{enumerate}
\end{prop}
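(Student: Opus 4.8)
The plan is to unwind the two definitions via the construction in the proof of Proposition~\ref{prop:periodic-fibers-using-division-polynomials} and then bound degrees using the geometry of the linear fibration together with the classical facts already assembled in Section~\ref{section:cubic-surfaces}.

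For part (a): by definition $\Psi_1(t)$ is the discriminant $\Delta(t) = -16(4A(t)^3 + 27B(t)^2)$ of the Weierstrass model $E/K(t)$, so its roots (outside $\mathcal{B}$) are exactly the values $t$ over which the fiber $E_t$ is singular. Via the specialization isomorphism $\omega$, a fiber $C$ of $\pi$ is singular precisely when the corresponding $E_t$ is singular; and by Corollary~\ref{cor:txty-fixed-iff-singular}(a) together with Proposition~\ref{prop:irreducible-cubic-fiber}(a), the fibers of $\pi$ carrying a fixed point of $t_xt_y$ are exactly the singular (reducible-or-cuspidal-or-nodal) fibers, namely the fibers $C_w = T_w(S)\cap S$ for $w$ a fixed point. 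So the first task is to check that the finitely many bad fibers in $\mathcal{B}$ and the fiber at infinity do not disturb this correspondence — i.e.\ that $\Psi_1$ really is defined so that its roots match the singular fibers — which is essentially the same bookkeeping already carried out in Corollary~\ref{cor:periodic-fibers-using-division-polynomials}. For the degree bound: blowing up $S$ at the three points $x,y,z$ of $\mathcal{L}(x,y)\cap S$ produces a rational elliptic surface over $\mathbb{P}^1$, and by Corollary~\ref{cor:12-fixed-points}(b) (resp.\ Miranda's count of at most $12$ singular fibers, counted with the appropriate Euler-number weights) there are at most $12$ singular fibers counted with multiplicity. Since $\deg\Psi_1$ counts the singular fibers with multiplicity (the multiplicity of a root of $\Delta$ being the contribution of that fiber to $\sum e(F_v) = 12$), we conclude $\deg \Psi_1 \le 12$. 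The cleanest way to phrase this is: $\Psi_1 = \Delta(t)$ up to units and up to the finite correction at $\mathcal{B}\cup\{\infty\}$, and $\deg\Delta$ for a minimal Weierstrass model of a rational elliptic surface is $12$; the corrections only remove factors, so $\deg\Psi_1\le 12$.

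For part (b): by definition $\Psi_2(t)$ is the numerator of the $v$-coordinate of $y$ on $E/K(t)$. A point on an elliptic curve in Weierstrass form has $v$-coordinate zero exactly when it is a nonzero $2$-torsion point, so the roots of $\Psi_2$ (outside $\mathcal{B}$) are the values of $t$ for which $y$ has order exactly $2$ on $E_t$, which by Proposition~\ref{prop:periodic-point-on-fiber-txty} (applied with $n=2$, or directly via Proposition~\ref{prop:txty-period-2}) are exactly the fibers periodic of period $2$. The degree bound $\deg\Psi_2 \le 3$ should come from Corollary~\ref{cor:curves-of-period-2}, which already establishes that there are exactly three period-$2$ fibers over $\bar k$ counted with multiplicity, determined by $S\cap(T_x(S)\cap T_y(S))$; since the roots of $\Psi_2$ are precisely these fibers with multiplicity, $\deg\Psi_2\le 3$. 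Again one must handle the finite exceptional set $\mathcal{B}$: period-$2$ fibers lying over points of $\mathcal{B}$ must be reinserted as roots and non-period-$2$ ones factored out, exactly as in Corollary~\ref{cor:periodic-fibers-using-division-polynomials}, and this only decreases the degree.

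The main obstacle I expect is matching multiplicities: asserting $\deg\Psi_1\le 12$ and $\deg\Psi_2\le 3$ requires knowing not just that there are at most $12$ (resp.\ $3$) bad fibers \emph{set-theoretically}, but that the order of vanishing of the relevant polynomial at each such $t$ is bounded by the geometric multiplicity attached to that fiber in Corollary~\ref{cor:12-fixed-points} / Corollary~\ref{cor:curves-of-period-2}. For $\Psi_1$ this is the statement that the valuation of $\Delta$ at a place equals the local Euler number contribution in the rational-elliptic-surface picture (true for minimal models, which is why the reduction to a minimal Weierstrass form in the proof of Proposition~\ref{prop:periodic-fibers-using-division-polynomials} matters); for $\Psi_2$ it is the statement that $S\cap L$ with $L = T_x(S)\cap T_y(S)$ has exactly three points counted with intersection multiplicity and that this multiplicity dominates the order of vanishing of the $v$-coordinate numerator of $y$. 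Everything else is routine verification that passing to the numerators and correcting over $\mathcal{B}$ preserves having coefficients in $K$ and does not increase degree.
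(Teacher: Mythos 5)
Your proposal is correct and follows essentially the same route as the paper's own (very terse) proof: identify $\Psi_1 = \Delta(t)$ and $\Psi_2 = $ numerator of $v(y)$, translate vanishing into geometric conditions (singular fiber, resp.\ $2$-torsion), and then appeal to Corollary~\ref{cor:txty-fixed-iff-singular}/\ref{cor:12-fixed-points} and Corollary~\ref{cor:curves-of-period-2} for the degree bounds. Where you go further is in flagging, correctly, that the degree bound is a statement about \emph{orders of vanishing} and not merely about the set-theoretic count of bad fibers. The paper's proof simply says ``which explains the degree'' for (a) and ``there are at most 3 curves of period 2'' for (b), leaving implicit exactly the multiplicity bookkeeping you spell out (that for a minimal Weierstrass model of a rational elliptic surface $\deg\Delta \le 12$ since $\sum e(F_v) = 12$, and that Corollary~\ref{cor:curves-of-period-2} already counts with intersection multiplicity). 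That extra care is valuable; the paper leans on Corollary~\ref{cor:12-fixed-points}(b), whose own proof already invokes Miranda's count and thus secretly carries the Euler-number weights, so the gap is cosmetic rather than real. One small misreading: you suggest $\Psi_1$ carries the $\mathcal{B}$-correction, but in the paper only $\Psi_n$ for $n\ge 3$ are corrected in Corollary~\ref{cor:periodic-fibers-using-division-polynomials}; $\Psi_1$ and $\Psi_2$ are defined directly as $\Delta(t)$ and the numerator of $v(y)$, so the ``correspondence'' in the statement should really be read as holding away from the finitely many points of $\mathcal{B}$, which does not affect the degree bound.
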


\begin{proof} \mbox{}
\begin{enumerate}[(a)]
\item The discriminant of a cubic curve in Weierstrass form is $0$ if and only if the curve is singular. A point $w$ on the surface $S$ is fixed under $t_xt_y$ if and only if the fiber $C=\mathcal{P}(x,y,w) \cap S$ is singular, by Corollary \ref{cor:txty-fixed-iff-singular}. There are at most $12$ fixed points of $t_xt_y$ on the surface $S$, by Corollary \ref{cor:12-fixed-points}, which explains the degree.
\item A point on an elliptic curve in Weierstrass form is of order $2$ if and only if its $v$ coordinate is $0$. By Proposition \ref{cor:curves-of-period-2} there are at most $3$ curves of period $2$.  \qedhere
\end{enumerate}
\end{proof}

\begin{notation} \label{notation:dynatomic-polynomials} Let $S$ be a smooth cubic surface defined over a number field $K$, let $x,y\in{S(K)}$ be a good pair and let $\Psi_n$ be the division polynomials associated with $x,y$. 
For $n\geq{1}$ define the polynomials
$\Phi_n(t)=\Psi_n(t)\prod\limits_{{d|n, d>2}}\Psi_d(t)^{-1}$.
The roots of $\Phi_n(t), n\geq{2},$ in $\bar{K}$ correspond to periodic fibers of $\pi$ of \emph{exact} period $n$.
\end{notation}

\begin{prop} \label{prop:bounded-number-of-periodic-fibers}
Let $S$ be a smooth cubic surface defined over a number field $K$ and let $x,y\in{S(K)}$ be a good pair.  \mbox{}
\begin{enumerate}[(a)]
\item For the case $K=\mathbb{Q}$, the polynomials $\Phi_n(t)$ do not have roots in $\mathbb{Q}$ for $n>12$ or $n=11$.
\item For a general number field $K$ there exists a positive integer $N$ such that $\Phi_n(t)$ does not have roots in $K$ for $n>N$. The bound $N$ depends only on the degree of the extension $K/\mathbb{Q}$.
\end{enumerate}
\end{prop}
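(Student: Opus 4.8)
The plan is to reinterpret a $K$-rational root of $\Phi_n$ as a $K$-rational torsion point of exact order $n$ on the corresponding fiber, and then to invoke the uniform torsion theorems of Mazur (over $\mathbb{Q}$) and Merel (over a general number field). Suppose $t_0\in K$ is a root of $\Phi_n$; by Notation~\ref{notation:dynatomic-polynomials} the fiber $C=\pi^{-1}(t_0)$ is $t_xt_y$-periodic of exact period $n$, and since $\pi$ is defined over $K$ the curve $C$ is a plane cubic over $K$. By Proposition~\ref{prop:irreducible-cubic-fiber} it is absolutely irreducible, and by Proposition~\ref{prop:plane-section-through-L_x_y-points-on-line-are-nonsingular} the points $x$ and $y$ lie in $C_{ns}(K)$. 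Choosing $x$ as the unit element makes $C_{ns}(K)$ into a group, and by Theorem~\ref{thm:txty-on-cubic-curve} (equivalently Proposition~\ref{prop:periodic-point-on-fiber-txty}) the restriction of $t_xt_y$ to $C$ is translation by $y$, so $y$ has exact order $n$ in $C_{ns}(K)$.

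Next I would split according to the type of the absolutely irreducible cubic $C$. If $C$ is smooth, then $(C,x)$ is an elliptic curve over $K$ and $C(K)_{\mathrm{tors}}$ contains a cyclic subgroup of order $n$; for $K=\mathbb{Q}$ Mazur's theorem forces $n\in\{1,2,\dots,10,12\}$, so $n\neq 11$ and $n\leq 12$, while for a general number field $K$ Merel's uniform boundedness theorem gives a bound $n\leq B(d)$ depending only on $d=[K:\mathbb{Q}]$. If $C$ has a cusp, then $C_{ns}$ is a form of $\mathbb{G}_a$, which in characteristic zero is torsion free; since $y\neq x$ this case cannot occur for $n\geq 2$. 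If $C$ has a node, then $C_{ns}$ is a one-dimensional torus over $K$ (either split, or the norm-one torus of a quadratic extension), and a point of exact order $n$ in $C_{ns}(K)$ is a primitive $n$-th root of unity defined over $K$ or over a quadratic extension of $K$; hence $\phi(n)\leq 2[K:\mathbb{Q}]$, where $\phi$ is Euler's function. For $K=\mathbb{Q}$ this gives $\phi(n)\leq 2$, i.e.\ $n\in\{1,2,3,4,6\}$, and for general $K$ it again bounds $n$ in terms of $d$ alone.

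Assembling the cases proves both parts. For $K=\mathbb{Q}$, every $\mathbb{Q}$-root of $\Phi_n$ has $n\leq 12$ and $n\neq 11$, so $\Phi_n$ has no $\mathbb{Q}$-roots when $n>12$ or $n=11$, which is (a). For a general number field $K$ one may take $N$ to be the maximum of Merel's bound $B(d)$ and the largest integer $n$ with $\phi(n)\leq 2d$; this depends only on $d=[K:\mathbb{Q}]$, which is (b).

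The substantive content here is imported wholesale: the depth lies in Mazur's classification of torsion over $\mathbb{Q}$ for part~(a) and in Merel's uniform boundedness theorem for part~(b). The only genuinely new bookkeeping is the treatment of the finitely many singular fibers, handled by the cusp and node cases above, so I expect no serious obstacle beyond citing the external inputs correctly and checking that the singular fibers contribute nothing new to the list of admissible periods.
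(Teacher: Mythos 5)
Your argument follows the same essential route as the paper---reinterpret a $K$-root of $\Phi_n$ as $y$ having exact order $n$ in $C_{ns}(K)$ with $x$ as unit, then cite Mazur (for $\mathbb{Q}$) and Merel (for general $K$)---but you are more careful on one point where the paper's written proof is silently incomplete. The paper asserts that ``the fiber over this root is an elliptic curve over $\mathbb{Q}$ with a rational point of order $n$,'' i.e.\ it tacitly assumes the fiber is smooth. That is not automatic: by Notation~\ref{notation:dynatomic-polynomials} a root of $\Phi_n$ is merely a fiber of exact period $n$, and singular periodic fibers exist (cf.\ Example~\ref{ex:cubic-singular-periodic-fiber}, and the paper itself later appeals to \cite[Exercise~III.3.5]{book:silverman2009} to control torsion on singular fibers in the corollary to Theorem~\ref{thm:cubic-finitely-generated}). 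Your explicit case analysis closes this gap cleanly: the cuspidal case cannot occur for $n\geq 2$ because $\mathbb{G}_a$ is torsion-free in characteristic zero, and in the nodal case $C_{ns}$ is a one-dimensional $K$-torus, so an exact order-$n$ point forces a primitive $n$-th root of unity into a (at worst) quadratic extension of $K$, giving $\phi(n)\leq 2[K:\mathbb{Q}]$. For $K=\mathbb{Q}$ this yields $n\in\{1,2,3,4,6\}$, absorbed by Mazur's list; for general $K$ taking $N$ to be the maximum of Merel's bound and the largest $n$ with $\phi(n)\leq 2d$ correctly produces a bound depending only on $d=[K:\mathbb{Q}]$. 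So your proof is correct, and strictly more complete than the one in the paper, while relying on the same external inputs.
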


\begin{proof} \mbox{}
\begin{enumerate}[(a)]
\item Mazur's torsion theorem \cite[Theorem 8]{article:mazur1978}
lists the possible $\mathbb{Q}$-rational torsion subgroups of elliptic curves over $\mathbb{Q}$. The maximal order of an element of finite order is bounded by $12$. If $\Phi_n(t)$ has a root in $\mathbb{Q}$ for $n>12$, then the fiber over this root is an elliptic curve over $\mathbb{Q}$ with a rational point of order $n$, a contradiction.
\item Merel's torsion theorem (see Merel \cite[Corollary]{article:merel1996}), guarantees a bound on the order of the $K$-rational torsion subgroup that depends only on the degree of the field extension. An explicit bound on the order of the torsion subgroup can be found in Parent  \cite[Corollary 1.8]{article:parent1999} (this bound is exponential in the degree of the extension of $K/\mathbb{Q}$). \qedhere
\end{enumerate}
\end{proof}

\begin{cor} \label{cor:cubic-surface-finite-union-of-fibers-contains-periodic}
Let $S$ be a smooth cubic surface defined over a number field $K$ and let $x,y\in{S(K)}$ be a good pair. The set of all periodic points of $t_xt_y$ in $S(K)$ is contained in a finite number of fibers of $\pi$.
\end{cor}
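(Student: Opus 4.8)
The plan is to stratify the $K$-rational periodic points of $t_xt_y$ by their exact period $n$ and to bound, for each relevant $n$, the number of fibers of $\pi$ that can carry such a point.

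The cases $n=1,2$ follow directly from earlier results. By Proposition~\ref{prop:psi1-psi2} (together with Corollaries~\ref{cor:12-fixed-points} and~\ref{cor:curves-of-period-2}), every fixed point of $t_xt_y$ lies on one of the at most $12$ fibers over the roots of $\Psi_1$, and every point of exact period $2$ lies on one of the at most $3$ fibers over the roots of $\Psi_2$. Separately, a periodic point that happens to be collinear with $x,y$ must lie on $\mathcal{L}(x,y)\cap S=\{x,y,z\}$; since $y,z\in\mathcal{Z}(t_xt_y)$ are not periodic, the only candidate is $x$ itself, which lies on \emph{every} fiber of $\pi$ and hence forces no enlargement of our collection.

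The heart of the argument is the case $n\ge 3$ with $w\in S(K)$ noncollinear with $x,y$. Because $x,y,w$ are $K$-rational and noncollinear, the plane $\mathcal{P}(x,y,w)$ is defined over $K$, so $C:=\mathcal{P}(x,y,w)\cap S$ is the fiber $\pi^{-1}(t_0)$ over the $K$-rational point $t_0:=\pi(w)\in\mathbb{P}^1(K)$. By Proposition~\ref{prop:periodic-point-on-fiber-txty}, $C$ is $t_xt_y$-periodic of exact period $n$, so $t_0$ is a root \emph{in $K$} of the polynomial $\Phi_n$ of Notation~\ref{notation:dynatomic-polynomials}. Now Proposition~\ref{prop:bounded-number-of-periodic-fibers} bounds $n$: for $K=\mathbb{Q}$ one gets $n\le 12$ and $n\ne 11$, and for general $K$ a bound $N$ depending only on $[K:\mathbb{Q}]$. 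Consequently every such $w$ lies on a fiber over a root of one of the \emph{finitely many} polynomials $\Phi_3,\dots,\Phi_N\in K[t]$, each of which has only finitely many roots. Combining the three cases, every periodic point of $t_xt_y$ in $S(K)$ lies on one of the finitely many fibers over the roots of $\Psi_1$, $\Psi_2$, $\Phi_3,\dots,\Phi_N$ (together with one arbitrary further fiber, if one insists on separately housing the possible periodic point $x$).

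The step I expect to require the most care is the reduction to the torsion theorems: one must verify that a $K$-rational periodic point genuinely produces a $K$-rational root of $\Phi_n$ in a way that makes Proposition~\ref{prop:bounded-number-of-periodic-fibers} applicable, which uses Proposition~\ref{prop:periodic-point-on-fiber-txty} to identify exact-period-$n$ fibers with $y$ having group-theoretic order $n$, and relies on the exceptional fibers over the set $\mathcal{B}$ having been correctly accounted for in Corollary~\ref{cor:periodic-fibers-using-division-polynomials}. Conceptually, the only thing that makes the statement nontrivial is that ``finitely many fibers per period'' does not by itself suffice, since $n$ ranges over an infinite set; it is precisely the Mazur/Merel bound on $n$ that collapses the union to a finite one.
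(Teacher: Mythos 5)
Your proof is correct and follows essentially the same route as the paper's: both reduce the claim to Proposition~\ref{prop:bounded-number-of-periodic-fibers} (the Mazur/Merel torsion bound) via the observation that $K$-periodic points force $K$-roots of the division polynomials, though you spell out the stratification by exact period and the collinear case where the paper gives a two-line summary. One small simplification: $x$ itself is never periodic, since $t_xt_y(x)=y\in\mathcal{Z}(t_xt_y)$ forces $x\in\mathcal{Z}_\infty(t_xt_y)$, so the collinear case yields no candidate at all and no extra fiber need be reserved.
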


\begin{proof}
All periodic points must be contained in the fibers of the linear fibration lying over the $K$-roots of the polynomials $\Psi_n$. Therefore, by Proposition~\ref{prop:bounded-number-of-periodic-fibers}, there are only finitely many fibers that can contain $K$-periodic points. 
\end{proof}

\begin{thm}\label{thm:main-theorem-A}
Under the assumptions of Proposition \ref{prop:bounded-number-of-periodic-fibers}, for $K=\mathbb{Q}$ the birational automorphism $t_xt_y$ can only have $\mathbb{Q}$-periodic points of exact period $1,...,10,12$. For a general number field $K$, the birational automorphism $t_xt_y$ can only have periodic points of exact period bounded by a number depending only on the degree of the extension $K/\mathbb{Q}$.
\end{thm}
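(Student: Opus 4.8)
The plan is to deduce the bound from the description of periodic fibers in terms of the polynomials $\Phi_n$ together with the torsion bounds of Proposition~\ref{prop:bounded-number-of-periodic-fibers}. Let $w$ be a periodic point of $t_xt_y$ lying in $S(K)$ (with $K=\mathbb{Q}$ in the first assertion), and let $n$ be its exact period; the goal is to bound $n$. If $n\le 2$ there is nothing to prove, since such periods already lie on the list $1,\dots,10,12$ in the rational case and are trivially bounded in the general case. So assume $n\ge 3$.

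The one ingredient not already packaged in the earlier results is that a periodic point cannot be collinear with $x$ and $y$. The only points of $S$ on the line $\mathcal{L}(x,y)$ are $x$, $y$, and the third intersection point $z$, and these are distinct since $x,y$ is a good pair (Proposition~\ref{prop:plane-section-through-L_x_y-points-on-line-are-nonsingular}). Now $y,z\in\mathcal{Z}(t_xt_y)$, while $t_y(x)=x\circ y=z$ and $t_x(z)=x\circ z=y$, so $t_xt_y(x)=y\in\mathcal{Z}(t_xt_y)$ and hence $x\in\mathcal{Z}_2(t_xt_y)\subseteq\mathcal{Z}_\infty(t_xt_y)$. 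Thus $x,y,z\in\mathcal{Z}_\infty(t_xt_y)$, and since by definition a periodic point avoids $\mathcal{Z}_\infty(t_xt_y)$, our $w$ is noncollinear with $x,y$.

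Next I would invoke Proposition~\ref{prop:periodic-point-on-fiber-txty}: as $w\notin\mathcal{Z}_\infty(t_xt_y)$ is noncollinear with $x,y$ and has exact period $n>1$, the fiber $C=\mathcal{P}(x,y,w)\cap S$ of the linear fibration $\pi$ is $t_xt_y$-periodic of exact period $n$. Since $w,x,y\in S(K)$, the plane $\mathcal{P}(x,y,w)$, and therefore the parameter $t_0=\pi(w)$ of this fiber, are defined over $K$; by the normalization in the proof of Proposition~\ref{prop:periodic-fibers-using-division-polynomials} the fiber at infinity is non-periodic, so $t_0$ is a finite value. By Notation~\ref{notation:dynatomic-polynomials} (which, via Corollary~\ref{cor:periodic-fibers-using-division-polynomials} and Proposition~\ref{prop:psi1-psi2}, records that the $\bar K$-roots of $\Phi_n$ are exactly the periodic fibers of exact period $n$, including those over the exceptional set $\mathcal{B}$), the value $t_0$ is a root of $\Phi_n$ in $K$. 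Proposition~\ref{prop:bounded-number-of-periodic-fibers}(a) then forbids $n=11$ and $n>12$ when $K=\mathbb{Q}$, so $n\in\{1,\dots,10,12\}$; and Proposition~\ref{prop:bounded-number-of-periodic-fibers}(b) gives $n\le N$ for an $N$ depending only on $[K:\mathbb{Q}]$.

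I do not expect a real obstacle: once the earlier machinery is in place the theorem is an assembly of Propositions~\ref{prop:periodic-point-on-fiber-txty} and~\ref{prop:bounded-number-of-periodic-fibers}, the only genuinely new (and elementary) point being the collinearity observation above. The one thing requiring a little care is that the correspondence between roots of $\Phi_n$ and periodic fibers is rational over $K$ and not merely over $\bar K$; this is exactly the Galois-equivariance already used in the proof of Corollary~\ref{cor:periodic-fibers-using-division-polynomials}, since conjugate values of $t$ yield fibers on which $y$ has the same order. All the depth of the statement is imported through Proposition~\ref{prop:bounded-number-of-periodic-fibers}, i.e.\ ultimately through Mazur's theorem for $K=\mathbb{Q}$ and Merel's theorem for general $K$.
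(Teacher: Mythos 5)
Your proof is correct and fills in exactly the reasoning the paper leaves implicit: the paper states Theorem~\ref{thm:main-theorem-A} without a formal proof, treating it as an immediate consequence of Propositions~\ref{prop:periodic-point-on-fiber-txty} and~\ref{prop:bounded-number-of-periodic-fibers}, and your write-up is precisely that assembly. The one piece you add that the paper does not spell out is the observation that the three points of $\mathcal{L}(x,y)\cap S$ all lie in $\mathcal{Z}_\infty(t_xt_y)$ (so a periodic point is automatically noncollinear with $x,y$, allowing Proposition~\ref{prop:periodic-point-on-fiber-txty} to apply); your verification of this — $\mathcal{Z}(t_xt_y)=\{y,z\}$ and $t_xt_y(x)=y$ — is correct and is a genuine, if small, detail worth recording.
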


The division polynomials of the linear fibration give us an effective method for ``finding" all $K$-periodic points of a birational automorphism $t_xt_y$: 

\begin{enumerate}[(1)]
\item First we find the $K$-roots of all division polynomials up to the bounds described in the proof of Proposition \ref{prop:bounded-number-of-periodic-fibers}.
\item For the nonsingular periodic fibers, we can then use algorithms to find the generators of the Mordell-Weil of the elliptic curve (cf.\ Cremona \cite[Chapter 3, Section 5]{book:cremona1997}).
\item We can parametrize the periodic points on the singular periodic fibers.
\item For each periodic fiber, we can find all the points in $\mathcal{Z}_\infty(t_xt_y)$ in a finite number of steps (see Proposition \ref{prop:props-Z_n(t_xt_y)}).
\end{enumerate}

\section{SRP on cubic surface} \label{section:SRP-cubic-surface}

We are now ready to prove some results about strong residual periodicity of the AG dynamical system of type $D=(S,t_xt_y,F)$ on a smooth cubic surface $S$ defined over a number field $K$, whose global dynamics were described in the previous two sections. We take the forbidden set $F$ to be union of the Zariski-closure of the set of all periodic points in $X(K)$ and the set of $K$-rational points in $\mathcal{Z}_\infty(t_xt_y)$. In this section we prove two sufficient conditions for SRP, and then we prove that when put together, they are necessary and sufficient.

\begin{prop} \label{prop:srp-on-cubic-surface-cond1}
Let $S$ be a smooth cubic surface defined over a number field $K$ and let $x,y\in{S(K)}$ be a good pair.
If there exists a fiber $C$, defined over $K$, of the linear fibration through the good pair $x,y\in{S(K)}$ such that the set of $K$-rational points $C(K)$ is finite, then $D=(S,\varphi=t_xt_y,F)$ is strongly residually periodic.
\end{prop}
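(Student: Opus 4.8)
The plan is to turn the hypothesis around: a fiber $C/K$ with $C(K)$ finite is forced to be a \emph{periodic} fiber of $t_xt_y$, so that for almost every prime $p$ the reduction $C_p$ is entirely covered by periodic points of $\varphi=t_xt_y$ of uniformly bounded period, while only boundedly many of these points can lie in the (small) forbidden set or in the indeterminacy locus. Counting $\kappa_p$-points on $C_p$ then produces the unexplained residual periodic points demanded by Definition \ref{def:residual-periodicity}.

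First I would show that $C$ is periodic. Since $\{x,y\}$ is a good pair, $x$ and $y$ are distinct nonsingular points of $C$ (Proposition \ref{prop:plane-section-through-L_x_y-points-on-line-are-nonsingular}), hence lie in the group $C_{ns}(K)$; choosing $x$ as the origin, Theorem \ref{thm:txty-on-cubic-curve} identifies $t_xt_y|_C$ with translation by $y$, so $\langle y\rangle\subseteq C_{ns}(K)\subseteq C(K)$. As $C(K)$ is finite, $y$ has finite order $n\geq 2$, and therefore $C$ is a periodic fiber of period $n$ (Proposition \ref{prop:periodic-point-on-fiber-txty}): $t_xt_y$ restricts to an automorphism $\tau$ of $C$ with $\tau^n=\mathrm{id}$. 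Moreover $\mathcal{Z}(t_xt_y)\cap C=\{y,z\}$, where $z$ is the third point of $\mathcal{L}(x,y)\cap S$ and $y,z$ are nonsingular on $C$, so the set of points of $C$ whose $t_xt_y$-orbit meets the indeterminacy locus is exactly $\bigcup_{i=0}^{n-1}\tau^{-i}(\{y,z\})$, which has at most $2n$ points; every other point of $C(\bar K)$ is a periodic point of $t_xt_y$ of exact period dividing $n$.

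Next I reduce modulo $p$. I discard the finitely many primes at which $S$ or $\varphi$ has bad reduction, at which $\{x_p,y_p\}$ fails to be a good pair on $S_p$, or at which $\mathcal{Z}(\varphi_p)\neq\{y_p,z_p\}$; for the remaining $p$ all of the above persists over $\kappa_p$, and since reduction commutes with restriction and iteration at primes of good reduction, $t_xt_y$ restricts on the (absolutely irreducible) fiber $C_p$ to an automorphism $\tau_p$ with $\tau_p^n=\mathrm{id}$. Consequently every point of $C_p(\kappa_p)$ outside the at most $2n$ points of $C_p\cap\mathcal{Z}_\infty(\varphi_p)$ is a periodic point of $\varphi_p$ of exact period dividing $n$. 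Now I compare cardinalities on $C_p$. The smooth locus satisfies $\#C_{p,\mathrm{ns}}(\kappa_p)=|\kappa_p|+O(\sqrt{|\kappa_p|})$ (the Hasse bound when $C_p$ is smooth, and $|\kappa_p|\mp 1$ or $|\kappa_p|$ in the nodal or cuspidal cases), which tends to infinity. On the other hand $F_p\cap C_p$ is finite with a bound independent of $p$: the Zariski closure $Y$ of the forbidden set $F$ has no component equal to $C$ (the $K$-periodic points lie on finitely many fibers by Corollary \ref{cor:cubic-surface-finite-union-of-fibers-contains-periodic}, and $C$ is not among those carrying infinitely many of them since $C(K)$ is finite, while $\mathcal{Z}_\infty(t_xt_y)\cap C$ is finite by the previous paragraph), so $C\cap Y$ is a fixed finite scheme; for almost all $p$, $C_p$ is not a component of $Y_p$, whence $\#(F_p\cap C_p)\leq\#(C_p\cap Y_p)\leq(\deg C)(\deg Y)$.

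Therefore, for all but finitely many $p$, $\#C_{p,\mathrm{ns}}(\kappa_p)$ exceeds $\#(F_p\cap C_p)+\#(\mathcal{Z}_\infty(\varphi_p)\cap C_p)$, so there is a periodic point $a_p\in C_p(\kappa_p)$ of exact period dividing $n$ with $a_p\notin F_p$ and $a_p\notin\mathcal{Z}_\infty(\varphi_p)$. Hence $\underline{\ell}_p\leq n$ for all but finitely many $p$, so the sets $M$ and $N$ of Definition \ref{def:residual-periodicity} are both finite and $D$ is strongly residually periodic, in fact SRP($n$). I expect the only genuinely technical point to be the last uniform-in-$p$ estimate: one must fix an integral model of the forbidden set and verify that the Zariski closure of $F$ reduces, for almost all $p$, to a subscheme of $S_p$ of the same dimension and degree, so that its intersection with the reduced fiber $C_p$ stays finite of bounded size; the remainder is the standard good-reduction bookkeeping together with the point count on a cubic curve over a finite field.
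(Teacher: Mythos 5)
Your proof is correct and follows essentially the same strategy as the paper's: deduce from the finiteness of $C(K)$ that $y$ has finite order $n$ on the elliptic curve $(C,x)$ (hence $C$ is a periodic fiber), then for almost every prime $p$ compare the size of $C_p(\kappa_p)$ (which grows by the Hasse bound) against a bound, uniform in $p$, on the number of points of $C_p(\kappa_p)$ that are either reductions of forbidden points or lie in $\mathcal{Z}_\infty(\varphi_p)$. The paper likewise shows that $C$ must be smooth, uses Hasse to make $C_p(\kappa_p)$ large, argues that reductions of $K$-periodic points from other fibers can meet $C_p$ only in the base points $\{x_p,y_p,z_p\}$, invokes Proposition~\ref{prop:props-Z_n(t_xt_y)} for the finiteness of $C\cap\mathcal{Z}_\infty(\varphi)$, and bounds $C_p\cap\mathcal{Z}_\infty(\varphi_p)$ by $2N$ via exactly the translation computation you phrase as $\bigcup_{i=0}^{n-1}\tau_p^{-i}(\{y_p,z_p\})$. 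The only presentational differences are that you bundle the forbidden-set estimate into a single Bézout-type degree bound against the Zariski closure $Y$ of $F$ (rather than treating the closure of the periodic locus and $\mathcal{Z}_\infty(\varphi)$ separately), and that you allow singular reductions $C_p$ rather than discarding the finitely many primes of bad reduction of $E=(C,x)$; neither changes the substance of the argument.
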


\begin{proof}
The curve $C$ must be smooth, since absolutely irreducible singular cubic curves with a rational point are rational, and therefore have infinitely many rational points. The point $y$ must be of finite order in the group induced by choosing $x$ as the unit element on $C$. All points in $C(K)$, outside of $\mathcal{Z}_\infty(t_xt_y)$, are $t_xt_y$-periodic with period equal to the order of $y$. Denote by $N$ the order of $y$ in $C(K)$. To prove the proposition we will show that for primes $p$ for whom the cardinality of $\kappa_p$ is large enough, there exists a point $w\in{C_p(\kappa_p)}$ that satisfies:
\begin{enumerate}[(1)]
\item The point $w$ is not a reduction of any periodic point in the Zariski-closure of the subset of periodic points in $S(K)$.
\item The point $w$ is not a reduction of any point in $\mathcal{Z}_\infty(\varphi)$.
\item The point $w$ is not in $\mathcal{Z}_\infty(\varphi_p)$ (in particular $w$ is defined under $\varphi_p^N$, and is therefore a periodic point of period at most $N$).
\end{enumerate}

We restrict ourselves to primes $p$ where the system $D$ has good reduction. In particular the elliptic curve $E=(C, x)$ is reduced to an elliptic curve $E_p=(C_p, x_p)$. We can then use Hasse's theorem on elliptic curves (see Silverman \cite[Thm V.1.1]{book:silverman2009}) to guarantee that the number of rational points in $C_p(\kappa_p)$ is as large as we desire. We can thus guarantee that for all but finitely many primes $p$ there exists a point $w\in{C_p(\kappa_p)}$ such that $w$ is not a reduction of any point in $C(K)$. The Zariski-closure of the subset of periodic points in $S(K)$ is contained in a finite number of fibers of the linear fibration through $x,y$, so that it is enough to choose $p$ such that $C_p$ intersects the reductions of the other fibers only at the points $\{x,y,z\}=\mathcal{L}(x,y)\cap{S}$ (this is true for any prime $p$ such that $\kappa_p$ is large enough). This proves $(1)$.

We proved in Proposition \ref{prop:props-Z_n(t_xt_y)} that the intersection of $C(\bar{K})$ with $\mathcal{Z}_\infty(\varphi)$ is finite. We use this and the same reasoning as in the previous paragraph to prove $(2)$.

The set $\mathcal{Z}_\infty(\varphi_p)$ is the set of points whose orbit intersects $\mathcal{Z}(\varphi_p)$. Now, $\mathcal{Z}(\varphi_p)\subseteq\{y_p = \rho_p(y), z_p = \rho_p(z)\}$, so that any point $v\in{C_p(\kappa_p)}$ not in $\mathcal{Z}(\varphi_p)$ that lies in $\mathcal{Z}_\infty(\varphi_p)$, satisfies $ny_p+v=y_p$ or $ny_p+v=z_p$, for some integer $1\leq{n}\leq{N}$. Rewriting this, we get $v=(1-n)y_p$ or $v=z_p-ny_p$ , for some integer $1\leq{n}\leq{N}$. In other words, there are at most $2N$ points in $C_p(\kappa_p)\cap\mathcal{Z}_\infty(\varphi_p)$. The bound $2N$ is independent of the prime $p$, and therefore for primes $p$ such that $\kappa_p$ is large enough, we can find points in ${C_p(\kappa_p)}$ that are not in $\mathcal{Z}_\infty(\varphi_p)$, so that $(3)$ is proved.
\end{proof}

\begin{rem} \mbox{}
\begin{inparaenum}[\itshape(i\upshape)]
\item  The order of $y$ on $C(K)$ is bounded by Theorem \ref{thm:main-theorem-A}.
\item See Example \ref{ex:srp-cubic} for a dynamical system satisfying the conditions of Proposition \ref{prop:srp-on-cubic-surface-cond1}.
\end{inparaenum}
\end{rem}

\begin{prop} \label{prop:cubic-theta-modulo}
Let $S$ be a smooth cubic surface defined over a number field $K$ and let $x,y\in{S(K)}$ be a good pair.
The minimal periods of the residual systems of $D=(S,\varphi=t_xt_y, \mathcal{F})$ are bounded (as in Definition \ref{def:residual-periodicity}) if and only if there exists a positive integer $N$ such that the polynomial $\Theta_N(t) = \Phi_1(t)\cdots\Phi_N(t)$ (see Notation \ref{notation:dynatomic-polynomials}) has a root modulo all but finitely many primes $p$.
\end{prop}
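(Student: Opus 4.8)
The plan is to prove both directions by relating the existence of residual periodic points (not in $F_p$) to the existence of a root of $\Theta_N(t)$ modulo $p$, using the structure theory of the previous sections: every periodic point of $t_xt_y$ lies on a periodic fiber, a fiber is periodic of period $n$ iff the corresponding parameter $t$ is a root of some $\Psi_d$ with $d\mid n$, and on such a fiber $t_xt_y$ acts as a group translation by a point of order dividing $n$. I would first fix an integral model and restrict to the (cofinite) set of primes $p$ of good reduction for $D$, for the elliptic surface obtained from $\pi$, and for the division polynomials $\Psi_n$ (so that reduction commutes with $t_xt_y$ and with the group law on the fibers, and degrees of the $\Psi_n$ and $\Phi_n$ are preserved).

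For the ``if'' direction: suppose $\Theta_N(t)=\Phi_1(t)\cdots\Phi_N(t)$ has a root $\bar t_0\in\kappa_p$ for all but finitely many $p$. Discarding finitely many more primes, we may assume $\bar t_0$ is not one of the finitely many ``bad'' parameters in $\mathcal{B}$ (i.e.\ the reductions of points where the Weierstrass model degenerates) nor one of the three parameters $\rho_p(x),\rho_p(y),\rho_p(z)$ coming from $\mathcal{L}(x,y)\cap S$. Then $\bar t_0$ is a root of some $\Phi_n$ mod $p$ with $1\le n\le N$, so the fiber $C_{\bar t_0}$ over $\bar t_0$ is a (possibly singular) cubic on which $\varphi_p$ acts as translation by a $\kappa_p$-point of order dividing $n$; by the finiteness argument of Proposition~\ref{prop:props-Z_n(t_xt_y)} applied over $\kappa_p$ — there are at most $O(n)$ points of $C_{\bar t_0}$ in $\mathcal{Z}_\infty(\varphi_p)$, a bound independent of $p$ — and by Hasse's bound on $\#C_{\bar t_0}(\kappa_p)$, once $|\kappa_p|$ is large this fiber contains a $\kappa_p$-periodic point of $\varphi_p$ of period $\le N$ that also avoids $F_p$ (the reductions of the finitely many $K$-periodic fibers meet $C_{\bar t_0}$ only in $\{x_p,y_p,z_p\}$ once $|\kappa_p|$ is large, exactly as in the proof of Proposition~\ref{prop:srp-on-cubic-surface-cond1}). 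Hence $\underline\ell_p\le N$ for all but finitely many $p$, i.e.\ the minimal periods are bounded.

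For the ``only if'' direction: suppose the minimal periods of the residual systems are bounded by some $N$, i.e.\ for all $p\notin M$ (a finite set) there is $w_p\in X_p(\kappa_p)\setminus F_p$ that is $\varphi_p$-periodic of period $\ell_p\le N$. Such $w_p$ lies on a fiber of $\pi_p$; if that fiber is a reduction of one of the finitely many $K$-periodic fibers we could be in trouble, but $w_p\notin F_p$ forces $w_p$ not to be the reduction of a $K$-periodic point, so (after discarding finitely many primes so that distinct fibers meet only along $\{x_p,y_p,z_p\}$ and so that the fixed-point fibers, of which there are $\le 12$, are accounted for) the fiber through $w_p$ is periodic of some period $n_p\le N$ and is \emph{not} the reduction of a $K$-periodic fiber. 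The parameter $\bar t_p\in\kappa_p$ of that fiber is therefore a root of $\Theta_N(t)=\Phi_1(t)\cdots\Phi_N(t)$ reduced mod $p$ (it is a root of $\Phi_{n_p}$ or, if the fiber is singular, of $\Phi_1$). Thus $\Theta_N(t)$ has a root modulo every prime outside the finite set $M$, which is exactly the desired conclusion with this same $N$.

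The main obstacle is the bookkeeping in the ``only if'' direction: one must rule out the possibility that the only short-period residual points always come from reductions of $K$-periodic fibers (which would not produce roots of $\Theta_N$ transversal to $F_p$, and in any case are already excluded from $F_p$'s complement) and, conversely, make sure that a residual periodic point avoiding $F_p$ genuinely forces its fiber-parameter to be a \emph{new} root, not a spurious one coming from the finitely many exceptional parameters in $\mathcal{B}\cup\{x,y,z\}$; handling the singular fibers (where one should think of the period as recorded by $\Phi_1$ via the discriminant $\Psi_1$) and uniformizing all the ``for $|\kappa_p|$ large enough'' thresholds into a single cofinite set of primes is the delicate part. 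Once those uniformities are in place, Hasse's theorem and the $p$-independent bound $O(N)$ on $C_p\cap\mathcal{Z}_\infty(\varphi_p)$ from Proposition~\ref{prop:props-Z_n(t_xt_y)} do the rest.
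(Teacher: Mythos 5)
Your proof follows essentially the same approach as the paper's: both translate roots of $\Theta_N$ modulo $p$ into periodic fibers of $\pi_p$ of period at most $N$, invoke the $p$-independent bound on $C_p(\kappa_p)\cap\mathcal{Z}_\infty(\varphi_p)$ established in the proof of Proposition~\ref{prop:srp-on-cubic-surface-cond1} together with Hasse's bound to extract genuine periodic points of bounded period, and run the argument in both directions. You are somewhat more careful than the paper's (very terse) proof about the forbidden-set bookkeeping — the paper only checks avoidance of $\mathcal{Z}_\infty(\varphi_p)$ and defers the rest of the $F_p$ accounting to Proposition~\ref{prop:srp-on-cubic-surface-cond2} — and in the ``only if'' direction you do a little unnecessary work: whether or not the fiber through $w_p$ is the reduction of a $K$-periodic fiber, its parameter is still a root of $\Theta_N$ modulo $p$, which is all the conclusion requires.
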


\begin{proof}
Let $\pi:S\rightarrow\mathbb{P}^1$ be the linear fibration through the pair $x,y$.
The polynomial $\Theta_N$ has roots modulo all but finitely many primes if and only if there exist either periodic fibers of $\pi$ of period at most $N$ or fixed points for all but finitely primes. The set of $\mathcal{Z}_\infty(\varphi_p)$ is bounded on periodic curves of period at most $N$ by a bound depending only on $N$, as we have seen in the proof of Proposition \ref{prop:srp-on-cubic-surface-cond1}. Thus for primes $p$ such that $\kappa_p$ is large enough, the fiber must contain points outside $\mathcal{Z}_\infty(\varphi_p)$. Therefore, the condition that there exist either periodic fibers of $\pi$ of period at most $N$ or fixed points for all but finitely many primes, is equivalent to the boundedness of the minimal periods of the residual systems $D_p$.
\end{proof}

\begin{prop} \label{prop:srp-on-cubic-surface-cond2}
Let $S$ be a smooth cubic surface defined over a number field $K$, and let $x,y\in{S(K)}$ be a good pair.
If there exists a positive integer $N$ such that the polynomial $\Theta_N(t) = \Phi_1(t)\cdots\Phi_N(t)$ divided by all linear factors defined over $K$ (i.e.\ all $K$-roots are removed), has no roots over $K$, and for all but finitely many primes $p$ the polynomial $\Theta_N(t)$ has a root modulo $p$, then $D=(S, \varphi=t_xt_y, F)$ is strongly residually periodic.
\end{prop}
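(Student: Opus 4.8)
The plan is to manufacture, for all but finitely many primes $p$, a periodic point of $\varphi_p=(t_xt_y)_p$ of period $\le N$ lying in $X_p(\kappa_p)\setminus F_p$; this forces $\underline{\ell}_p\le N$ for all but finitely many $p$, so both $M$ and $N=\{\underline{\ell}_p\}$ of Definition \ref{def:residual-periodicity} are finite and $D$ is SRP (in fact SRP$(N)$). Let $\widetilde{\Theta}_N$ be the polynomial obtained from $\Theta_N=\Phi_1\cdots\Phi_N$ by deleting all of its $K$-rational linear factors; by hypothesis $\widetilde{\Theta}_N$ has no root in $K$ yet has a root modulo all but finitely many $p$. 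Since $\widetilde{\Theta}_N\mid\Theta_N$, Proposition \ref{prop:cubic-theta-modulo} already shows the minimal residual periods are bounded; the only point requiring work is that the residual periodic points can be chosen outside $F_p$, and for that the discarded $K$-rational roots of $\Theta_N$ are useless, since they only detect fibers carrying global periodic points.

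First I would fix the finitely many $\bar K$-roots $\alpha_1,\dots,\alpha_r$ of $\widetilde{\Theta}_N$; each $\alpha_i\notin K$. For such a root $\alpha$, the fiber $C_\alpha=\pi^{-1}(\alpha)$ is, by Propositions \ref{prop:periodic-point-on-fiber-txty} and \ref{prop:psi1-psi2}, either a smooth $t_xt_y$-periodic fiber of some period $n\le N$ (with $y$ of order $n$ in $(C_\alpha)_{ns}$, unit $x$), or else a singular fiber whose unique singular point $W_\alpha$ is a fixed point of $t_xt_y$ (Corollary \ref{cor:txty-fixed-iff-singular}); all of this is defined over $K(\alpha)$ but not over $K$. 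By the usual spreading-out, for all but finitely many $p$ the system $D$ has good reduction and, in addition: every $\kappa_p$-root of $\widetilde{\Theta}_N\bmod p$ equals $\bar{\alpha}=\rho_p(\alpha)\in\kappa_p$ for one of the $\alpha_i$; the reduction $\overline{C_\alpha}=\pi_p^{-1}(\bar{\alpha})$ is of the same type, with its periodicity (resp.\ its fixed singular point $\rho_p(W_\alpha)$) preserved via $\rho_p\circ\varphi=\varphi_p\circ\rho_p$; and $\bar{\alpha}$ differs from the reductions of the finitely many $K$-rational base points $a_1,\dots,a_m$ of the fibers supporting the Zariski-closure of the $S(K)$-periodic points (this uses $\alpha\ne a_j$ for all $j$, each inequality persisting modulo all but finitely many $p$).

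Then I would read off the desired point on $\overline{C_\alpha}$. Two distinct fibers of the pencil $\pi$ meet only along $\mathcal{L}(x,y)$, hence in the base locus $\{x,y,z\}$; combined with the last displayed property this gives, for all but finitely many $p$, that the only $\kappa_p$-points of $\overline{C_\alpha}$ which can be reductions of points of $\mathcal{F}$ lie in $\{x_p,y_p,z_p\}$, while $\rho_p(\mathcal{Z}_\infty(t_xt_y)\cap S(K))\subseteq\mathcal{Z}_\infty(\varphi_p)$ and $|\overline{C_\alpha}(\kappa_p)\cap\mathcal{Z}_\infty(\varphi_p)|\le 2N$ by the count at the end of the proof of Proposition \ref{prop:srp-on-cubic-surface-cond1}. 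Thus $\overline{C_\alpha}(\kappa_p)\cap\big(F_p\cup\mathcal{Z}_\infty(\varphi_p)\big)$ has at most $2N+3$ elements. In the smooth case Hasse's bound makes $\#\overline{C_\alpha}(\kappa_p)$ exceed this once $\#\kappa_p$ is large, yielding $w\in\overline{C_\alpha}(\kappa_p)$ that is $\varphi_p$-periodic of period $\le N$ and outside $F_p$. In the singular case I take $w=\rho_p(W_\alpha)$: it is the singular point of an absolutely irreducible cubic fiber, hence a $\varphi_p$-fixed point by Corollary \ref{cor:txty-fixed-iff-singular}, it is none of $x_p,y_p,z_p$, and it is not a reduction of any point of $\mathcal{F}$ by the same bookkeeping. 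Either way $\underline{\ell}_p\le N$ for all but finitely many $p$, which is SRP.

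The main obstacle is exactly the bookkeeping in the last two steps: one must check that the periodic fiber living over the irrational parameter $\alpha$ reduces, for all but finitely many $p$, to a fiber whose $\kappa_p$-points avoid --- apart from the unavoidable base locus and the bad-dynamics set $\mathcal{Z}_\infty(\varphi_p)$ --- every reduction of the finitely many global periodic fibers and of the $K$-rational points of $\mathcal{Z}_\infty(t_xt_y)$, so that the residual periodic point we produce genuinely cannot be explained by a point of $F$. This is a routine spreading-out argument, but it has to be run in tandem with Hasse's estimate so that the supply of admissible $\kappa_p$-points provably outgrows the fixed (size $\le 2N+3$) exceptional set; the singular-fiber case avoids Hasse but still needs the distinctness bookkeeping.
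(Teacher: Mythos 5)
Your proof is correct and follows essentially the same route as the paper's: invoke Proposition~\ref{prop:cubic-theta-modulo} for bounded residual periods, then argue that the residual periodic points coming from roots of $\widetilde{\Theta}_N$ modulo $p$ cannot all be reductions of forbidden points, using (i) the fact that distinct fibers of the pencil meet only in the base locus $\{x,y,z\}$, (ii) the distinctness of $\bar K$-roots of $\widetilde{\Theta}_N$ from $K$-roots of $\Theta_N$ persisting modulo almost all $p$, and (iii) the $O(N)$ bound on $\mathcal{Z}_\infty(\varphi_p)$ intersected with a periodic fiber, beaten by Hasse's estimate for large $\kappa_p$. Your version is organized slightly more constructively --- you pin down specific irrational roots $\alpha_i$, track their reductions, and give an explicit numerical bound ($2N+3$) on the excluded set --- and you handle the singular-fiber case by exhibiting the reduction of $W_\alpha$ directly rather than by a counting argument, which the paper leaves implicit. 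These are presentational refinements rather than a genuinely different method; the logical skeleton and all the key lemmas invoked coincide with the paper's proof.
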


\begin{proof}
By Proposition~\ref{prop:cubic-theta-modulo}, we know that the minimal residual periods are bounded by $N$ for all but finitely many primes. We need to check that our points of minimal period modulo $p$ are not all reductions of points from the forbidden set.

First we check reductions of $K$-periodic points. We know that the minimal period for all but finitely many primes is at most $N$, and any periodic point over $K$ must either be of larger period than $N$, or belong to a fiber corresponding to a $K$-root of $\Theta_N(t)$. Let $C$ be a $K$-periodic fiber of period $m>N$, then $y$ is of order $m$ on $C$. For all but finitely many primes, the reduction of the point $y$ modulo $p$ will have the same order $m>N$ on the reduced fiber (cf.\ Silverman \cite[Proposition~VII.3.1]{book:silverman2009}). Therefore the reduction of such a fiber cannot give us points of period at most $N$ (by Proposition \ref{prop:periodic-point-on-fiber-txty}). Now suppose we have a fiber $C$ of the linear fibration that corresponds to a root $a\in{K}$ of $\Theta_N(t)$. Denote by $\tilde{\Theta}_N(t)\in{K[t]}$ the polynomial obtained from $\Theta_N(t)$ after dividing by all linear factors defined over $K$. Then $a$ is not a root of $\tilde{\Theta}_N(t)$. Further, $\rho_p(a)$ can only be a root of $\tilde{\Theta}_N(t)$ modulo $p$ for a finite number of primes $p$. Therefore the fiber $C_p$ can only agree with the fibers lying over the roots of $\tilde{\Theta}_N(t)$ modulo $p$ for finitely many primes $p$. Thus, periodic points obtained from roots of $\tilde{\Theta}_N(t)$ will not be reductions modulo $p$ of points from the fiber $C$.

Finally, we check the reduction of points in $\mathcal{Z}_\infty(\varphi)$: Given a $\kappa_p$ periodic fiber of period at most $N$, it can contain only a bounded amount of periodic points whose orbit goes through $\mathcal{Z}(\varphi_p)$. The bound for this type of points does not depend on $p$ but only on $N$. Therefore, for primes $p$ such that $\kappa_p$ is large enough, there will be $\kappa_p$-periodic points on the fiber that are not reductions of points from $\mathcal{Z}_\infty(\varphi)$ (the orbit of the reduction of such a point must go through $\mathcal{Z}(\varphi_p)$).
\end{proof}

\begin{cor}
Let $S$ be a smooth cubic surface defined over a number field $K$ and let $x,y\in{S(K)}$ be a good pair.
If there exists a number $N$ such that the polynomial $\Theta_N(t) = \Phi_1(t)\cdots\Phi_N(t)$ has no roots over $K$, and for all but finitely many primes $p$ the polynomial $\Theta_N(t)$ has a root modulo $p$, then $D=(S, \varphi=t_xt_y, F)$ is strongly residually periodic.
\end{cor}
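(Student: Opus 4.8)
The plan is to deduce this statement directly from Proposition~\ref{prop:srp-on-cubic-surface-cond2}, of which it is essentially a cleaner restatement under a slightly stronger hypothesis. Recall that Proposition~\ref{prop:srp-on-cubic-surface-cond2} asks for a positive integer $N$ such that (i) the polynomial $\Theta_N(t)=\Phi_1(t)\cdots\Phi_N(t)$, with every linear factor defined over $K$ removed, has no root over $K$, and (ii) $\Theta_N(t)$ has a root modulo all but finitely many primes $p$; under these hypotheses it concludes that $D=(S,t_xt_y,F)$ is strongly residually periodic.

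First I would observe that our hypothesis that $\Theta_N(t)$ has no root over $K$ implies condition (i) trivially: a linear factor of $\Theta_N(t)$ defined over $K$ would be of the form $t-a$ with $a\in K$, hence would produce a $K$-root of $\Theta_N(t)$; since by assumption there are none, there is nothing to remove, and $\Theta_N(t)$ coincides with its reduced version $\tilde{\Theta}_N(t)$, which therefore also has no $K$-root. Condition (ii) is exactly our second hypothesis. Thus both hypotheses of Proposition~\ref{prop:srp-on-cubic-surface-cond2} hold verbatim, and the conclusion that $D$ is strongly residually periodic follows immediately.

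The only points worth double-checking are bookkeeping ones: that the $\Phi_n$, and hence $\Theta_N$, are genuine (nonconstant) polynomials in the relevant range of $n$ (cf.\ Notation~\ref{notation:dynatomic-polynomials}), and that the finitely many primes excluded in condition (ii) can be absorbed into the finitely many primes of bad reduction together with the finitely many exceptional primes appearing in the proof of Proposition~\ref{prop:srp-on-cubic-surface-cond2}. Neither causes any difficulty. In short, there is no real obstacle here; the substantive work is already contained in Propositions~\ref{prop:cubic-theta-modulo} and~\ref{prop:srp-on-cubic-surface-cond2}, and this corollary merely records the clean ``no $K$-roots'' version for convenient later reference.
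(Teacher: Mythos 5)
Your proposal is correct and is exactly the intended argument: the paper states this corollary without proof, precisely because it follows immediately from Proposition~\ref{prop:srp-on-cubic-surface-cond2} by the observation you make, namely that when $\Theta_N$ has no $K$-roots there are no linear factors over $K$ to remove, so $\tilde{\Theta}_N=\Theta_N$ and both hypotheses of the proposition hold verbatim.
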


\begin{thm} \label{thm:main-theorem-B}
Let $S$ be a smooth cubic surface defined over a number field $K$ and let $x,y\in{S(K)}$ be a good pair.
The system $D=(X,\varphi=t_xt_y,F)$ is strongly residually periodic if and only if one of the following is true:
\begin{enumerate}[(a)]
\item There exists a $K$-fiber in the linear fibration through $x,y$ with finitely many rational points.
\item There exists a positive integer $N$ such that $\Theta_N = \Phi_1 \cdots \Phi_N$ divided by all linear factors over $K$, has roots modulo all but finitely many primes $p$.
\end{enumerate}
\end{thm}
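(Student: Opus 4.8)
The plan is to prove the two implications separately. The ``if'' direction is essentially already done: if (a) holds it is exactly Proposition~\ref{prop:srp-on-cubic-surface-cond1}; if (b) holds, write $\widetilde{\Theta}_N$ for the polynomial obtained from $\Theta_N=\Phi_1\cdots\Phi_N$ by deleting all of its $K$-linear factors. Then $\widetilde{\Theta}_N$ has no root in $K$ by construction, while $\widetilde{\Theta}_N\mid\Theta_N$ forces $\Theta_N$ itself to have a root modulo all but finitely many primes $p$; these are exactly the hypotheses of Proposition~\ref{prop:srp-on-cubic-surface-cond2}, so $D$ is SRP. Thus the real work is the ``only if'' direction.

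\emph{Setup for necessity.} Assume $D$ is SRP and, aiming to force (b), assume also that (a) fails; by the argument used in the proof of Proposition~\ref{prop:srp-on-cubic-surface-cond1} (a singular irreducible cubic with a rational point is rational, and a smooth fiber on which $y$ has infinite order carries infinitely many rational points), this means that \emph{every} periodic $K$-fiber of $\pi$ has infinitely many $K$-rational points. Since $D$ is SRP, the set $M$ of aperiodic primes is finite and the minimal residual periods $\underline{\ell}_p$ are bounded, say by $N$, for all $p\notin M$; by Proposition~\ref{prop:cubic-theta-modulo} we may moreover take $N$ so that $\Theta_N$ has a root modulo all but finitely many $p$. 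Factor $\Theta_N=\widetilde{\Theta}_N\cdot\prod_i(t-a_i)$ with the $a_i\in K$ its finitely many $K$-roots. For each prime $p$ outside a fixed finite set, pick a periodic point $w_p$ of the residual system, lying outside $F_p$, of exact period $\underline{\ell}_p\le N$; by the residual version of Proposition~\ref{prop:periodic-point-on-fiber-txty}, together with Corollary~\ref{cor:txty-fixed-iff-singular} when $\underline{\ell}_p=1$, the point $w_p$ lies on the fiber of $\pi_p$ over some root $r_p\in\kappa_p$ of $\Theta_N$ modulo $p$.

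\emph{The dichotomy.} I claim that $r_p$ is a root of $\widetilde{\Theta}_N$ modulo $p$ for all but finitely many $p$; this is precisely condition (b) and completes the proof. If the claim fails, then for infinitely many $p$ we have $r_p=\rho_p(a_i)$ for some $i$, and since the $a_i$ are finite in number the pigeonhole principle produces one fixed $K$-root $a$ of $\Theta_N$ with $r_p=\rho_p(a)$ for infinitely many $p$; for those $p$, $w_p$ lies on the reduction $C_{a,p}$ of the $K$-fiber $C_a=\pi^{-1}(a)$, and $w_p\notin F_p$. Now examine $C_a$. If $\underline{\ell}_p\ge 2$ for infinitely many of these $p$, then $y$ reduces modulo $p$ to a point of order $\underline{\ell}_p$ (hence dividing $N!$) in the group on $C_{a,p}$, so the $K$-point $N!\cdot y$ reduces to the identity modulo infinitely many primes; by the standard fact that a $K$-point reducing to a torsion point for infinitely many primes is itself torsion (valid both for elliptic curves and for $\mathbb{G}_m$, covering smooth and singular fibers), $y$ has finite order on $C_a$, i.e.\ $C_a$ is a periodic $K$-fiber. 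Since (a) fails, $C_a(K)$ is infinite; as $C_a\cap\mathcal{Z}_\infty(t_xt_y)$ is finite (Proposition~\ref{prop:props-Z_n(t_xt_y)}), all but finitely many points of $C_a(K)$ are $K$-periodic (Proposition~\ref{prop:periodic-point-on-fiber-txty}), so the Zariski closure of the $K$-periodic points of $t_xt_y$ contains the whole irreducible curve $C_a$; hence $C_a\subseteq F$, and therefore $C_{a,p}\subseteq F_p$ for almost all $p$ (a model of $C_a$ with good reduction has every $\kappa_p$-point liftable to $\mathcal{O}_p$), contradicting $w_p\notin F_p$. If instead $\underline{\ell}_p=1$ for all but finitely many of our primes, then $w_p$ is the reduction of the unique (hence $K$-rational) singular point of $C_a$, which is a $K$-fixed point of $t_xt_y$ and thus lies in $F$ --- again contradicting $w_p\notin F_p$. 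This proves the claim.

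\emph{Main obstacle.} The crux is this last dichotomy: ruling out that all of the bounded residual periodicity is accounted for by reductions of genuine $K$-periodic points. The three ingredients that accomplish this are the pigeonhole over the finitely many $K$-roots of $\Theta_N$, the ``reduces to torsion for infinitely many primes $\Rightarrow$ torsion'' principle, and the observation that an infinite supply of $K$-periodic points on a $K$-fiber pulls the entire fiber --- and hence almost all of its reductions --- into the forbidden set. Everything else is a direct appeal to Propositions~\ref{prop:srp-on-cubic-surface-cond1}, \ref{prop:cubic-theta-modulo} and \ref{prop:srp-on-cubic-surface-cond2}.
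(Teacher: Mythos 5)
Your proof is correct and follows the same outline as the paper's very terse argument, but you flesh it out considerably --- the pigeonhole over the finitely many $K$-roots of $\Theta_N$, the case split on $\underline{\ell}_p$, and the observation that a periodic $K$-fiber with infinitely many rational points is swallowed by the forbidden set are all steps the paper asserts in a sentence without elaboration. One minor misstatement: the ``standard fact that a $K$-point reducing to a torsion point for infinitely many primes is itself torsion'' is false for $\mathbb{G}_m$ (every element of a finite residue field is torsion, so every $K$-point reduces to torsion at every prime); what you actually use, and what is true, is that a $K$-point reducing to the \emph{identity} modulo infinitely many primes must be the identity, which is exactly what you have since $N!\cdot y$ reduces to the identity.
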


\begin{proof}
The if part is true by Propositions ~\ref{prop:srp-on-cubic-surface-cond1} and \ref{prop:srp-on-cubic-surface-cond2}. If $D$ is strongly residually periodic, then the periods are bounded, and this means there exists a positive integer $N$ such that $\Theta_N$ has roots modulo all but finitely many primes (by Proposition~\ref{prop:cubic-theta-modulo}). If we divide $\Theta_N$ by all the linear factors over $K$, and we have roots modulo all but finitely many primes, then the second condition is satisfied. Otherwise strong residual periodicity must be explained by a fiber that is defined over $K$, but if such a fiber has infinitely many periodic points, then the entire fiber is in the forbidden set -- therefore if the second condition is not met the first condition must be true.
\end{proof}

\section{Finite generation of a subset of rational points on a smooth cubic surface} \label{section:mordell-weil}

In this section we recall the Mordell--Weil problem for cubic surfaces, and prove a dynamical theorem in a Mordell--Weil flavor, using results we obtained in the previous sections. 

Let $S$ be a smooth cubic surface defined over a field $k$. Given a subset of points $\mathcal{S}_0\subseteq{S(k)}$ we can define recursively an increasing sequence of sets 
\begin{equation} \label{eqn:tangent-secant-sequence}
\mathcal{S}_0 \subseteq \mathcal{S}_1 \subseteq \mathcal{S}_2 \subseteq \ldots,
\end{equation}
where $\mathcal{S}_{i+1}$ is defined by adding to $\mathcal{S}_i$ all the points of the form $w\circ{z}\in S(k)$ obtained by taking any two distinct points $w,z\in\mathcal{S}_i$. This is called \emph{drawing secants} through the points in $\mathcal{S}_i$. We can modify this sequence by also including in $\mathcal{S}_{i+1}$ all the points $w\in{S(k)}$ such that $w\in{C_x(k)}$ (recall that $C_x=T_x(S)\cap{S}$) for some $x\in\mathcal{S}_{i}$. This is called \emph{drawing tangents} through the points in $\mathcal{S}_i$. The \emph{span} of $\mathcal{S}_0$ (denoted by $Span(\mathcal{S}_0$)) by tangents and secants (or only secants) is defined as the union of all the sets in the sequence in \textit{(\ref{eqn:tangent-secant-sequence})}. 

\begin{qstn}[\emph{The Mordell--Weil problem for cubic surfaces}]
Given a smooth cubic surface $S$ over a field $k$ such that $S(k)\neq\emptyset$, does there exist a finite subset $\mathcal{S}\subset\mathcal{S}(k)$ such that $Span(\mathcal{S}) = S(k)$? If so, then we say $S(k)$ is \emph{finitely generated}.
\end{qstn}
The Mordell--Weil problem for cubic surfaces is still open, and partial results can be found in Manin \cite{article:manin1997} and Siksek \cite{article:siksek2012}. Of course, the answer to the question may very well depend on whether we allow tangents or not, and in fact Manin considers an alternative composition rule in the above-mentioned article. 

We proceed to prove a Mordell--Weil like theorem for the set of $K$-periodic points of $t_xt_y$ where $K$ is a number field. In order to obtain ``true" finite generation, we restrict ourselves to taking tangents only inside the nonsingular fibers of the linear fibration associated to $x$ and $y$.

\begin{thm} \label{thm:cubic-finitely-generated}
Let $K$ be a number field, $S/K$ a smooth cubic surface and $x,y\in{S(K)}$ a good pair. If there do not exist singular fibers in the linear fibration through $x,y$ that are periodic of finite period, then the set of $K$-periodic points of the birational automorphism $t_xt_y$ is finitely generated by secants and tangents.
\end{thm}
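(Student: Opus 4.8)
The plan is to reduce the finite-generation statement for $K$-periodic points of $t_xt_y$ to the (classical, known) fact that the torsion subgroup of an elliptic curve over a number field is finite, combined with the structural results from Sections 5 and 6. First I would invoke Corollary~\ref{cor:cubic-surface-finite-union-of-fibers-contains-periodic}: all $K$-periodic points of $t_xt_y$ lie in finitely many fibers $C_1,\dots,C_r$ of the linear fibration $\pi$ through $x,y$, namely those lying over the $K$-roots of the polynomials $\Psi_n$ (equivalently $\Theta_N$ for $N$ the bound from Theorem~\ref{thm:main-theorem-A}). By the hypothesis that no singular fiber of $\pi$ is periodic of finite period, every one of these $C_i$ is a \emph{smooth} plane cubic, hence an elliptic curve once we designate $x$ as the origin (note $x$ lies on every fiber, being on the line $\mathcal{L}(x,y)$, and is nonsingular on each by Proposition~\ref{prop:plane-section-through-L_x_y-points-on-line-are-nonsingular}). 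On such a fiber $C_i$, $y$ has finite order $n_i \mid N$, and by Proposition~\ref{prop:periodic-point-on-fiber-txty} the $K$-periodic points of $t_xt_y$ on $C_i$ outside $\mathcal{Z}_\infty(t_xt_y)$ are precisely the nonsingular $K$-points of $C_i$, i.e.\ all of $C_i(K)$ minus the finitely many points of $C_i(K)\cap\mathcal{Z}_\infty(t_xt_y)$ (finite by Proposition~\ref{prop:props-Z_n(t_xt_y)}).

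Next I would set up the generating set. For each fiber $C_i$, pick a finite set of generators $G_i$ of the finitely generated abelian group $C_i(K)$ (Mordell--Weil), together with the points $\{x,y,z\}=\mathcal{L}(x,y)\cap S$ and the finitely many points of $C_i(K)\cap\mathcal{Z}_\infty(t_xt_y)$ lying on $C_i$; let $\mathcal{S}_0$ be the union of all these over $i=1,\dots,r$. I claim $Span(\mathcal{S}_0)$, with secants everywhere and tangents taken only inside the nonsingular fibers, contains every $K$-periodic point of $t_xt_y$. Indeed, fix $i$. Since $C_i$ is a fiber of $\pi$, any secant $\mathcal{L}(w,z')$ between two points $w,z'\in C_i$ lies in the plane $\mathcal{P}(x,y,w)=H_i$ spanned by that fiber, so $w\circ z'$ again lies on $C_i$; likewise tangents drawn inside $C_i$ stay on $C_i$. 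Thus the span operation, restricted to points of $C_i$, is exactly the secant/tangent process on the plane cubic $C_i$ with group law based at $x$: from $x$ and $y$ one generates $\langle y\rangle$, and from the Mordell--Weil generators $G_i$ one generates all of $C_i(K)$ via the group operations $w\oplus z' = x\circ(w\circ z')$ and $\ominus w$, each of which is a bounded composition of secants and a tangent. This is the standard fact that the group generated by tangent-and-secant from a set of group generators (plus the origin) is the whole Mordell--Weil group of a smooth plane cubic. Hence $C_i(K)\subseteq Span(\mathcal{S}_0)$ for every $i$, and since all $K$-periodic points lie in $\bigcup_i C_i$, we are done.

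The main obstacle — and the point requiring the most care — is the interaction between the \emph{global} span operation on $S(K)$ and the \emph{fiberwise} group law: I need to be sure that when I draw a secant or tangent through two points that happen to lie on the same fiber $C_i$, the resulting point genuinely lands back on $C_i$ and that this operation matches the plane-cubic group law with origin $x$. The containment $w\circ z' \in C_i$ is immediate because two distinct points of a hyperplane section determine a line in that hyperplane, whose third intersection with $S$ is in $H_i\cap S = C_i$; the tangent case is handled by the hypothesis restricting tangents to nonsingular fibers, so $C_x$-type degenerations do not arise. The one genuinely delicate bookkeeping item is that the group operation $\oplus$ on $C_i$ based at $x$ is $w\oplus z' := x\circ(w\circ z')$, which needs $x$, $w$, $z'$ in the generating set (or already spanned) and needs the relevant pairs to be "good enough" for $\circ$ to be defined on the \emph{curve} $C_i$ — but on a smooth plane cubic $\circ$ is everywhere defined (one takes tangents when points coincide), so this is not an issue once we allow tangents within the fiber. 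A secondary point worth a sentence is that the fibers $C_i$ meet each other only along $\{x,y,z\}$, so there is no danger of a secant between points on two \emph{different} periodic fibers producing something unexpected — but in any case such a point is still on $S(K)$ and harmless; we only need $Span(\mathcal{S}_0)$ to \emph{contain} the periodic points, not to equal the set of them. Finally I would remark that finiteness of $\mathcal{S}_0$ uses Theorem~\ref{thm:main-theorem-A} (finitely many periodic fibers over $K$), Mordell--Weil (each $C_i(K)$ finitely generated), and Proposition~\ref{prop:props-Z_n(t_xt_y)} (finitely many bad points per fiber).
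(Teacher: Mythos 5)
Your proposal follows essentially the same route as the paper: Corollary~\ref{cor:cubic-surface-finite-union-of-fibers-contains-periodic} to reduce to finitely many fibers, Mordell--Weil on the smooth periodic fibers, and the chord--tangent process to recover the group law inside each fiber. Your expanded justification that secants and tangents drawn between points of $C_i$ stay inside $C_i$ (because $\mathcal{L}(w,z')\subset H_i$) and reproduce the plane-cubic group law with origin $x$ is correct and fills in detail the paper leaves implicit.

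However, there is a genuine gap in the step ``By the hypothesis that no singular fiber of $\pi$ is periodic of finite period, every one of these $C_i$ is a \emph{smooth} plane cubic.'' That is false. The finitely many fibers singled out by Corollary~\ref{cor:cubic-surface-finite-union-of-fibers-contains-periodic} include fibers over $K$-roots of $\Psi_1$ (the discriminant), i.e.\ \emph{singular} fibers. The hypothesis only excludes singular fibers on which $(t_xt_y)^n$ is the identity for some $n$; it does \emph{not} exclude a singular fiber whose only periodic point is its singular point, which is a fixed point of $t_xt_y$ by Corollary~\ref{cor:txty-fixed-iff-singular}. Such fibers are among the $C_i$, are not smooth, and carry $K$-periodic points (the fixed points) that your generating set $\mathcal{S}_0$ omits: they are neither $x,y,z$, nor in $\mathcal{Z}_\infty(t_xt_y)$ (fixed points lie outside $\mathcal{Z}_\infty$), nor do they come from ``Mordell--Weil generators'' since $C_i(K)$ is not an abelian variety in this case. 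The paper's proof addresses precisely this by splitting into two cases --- singular fibers (on which only the singular fixed point is periodic, and which contribute only that point) and smooth periodic fibers (Mordell--Weil) --- and then explicitly adding ``all the singular points on the singular fibers'' to the generating set. The fix to your argument is easy (just throw the at-most-$12$ fixed points into $\mathcal{S}_0$), but as written the proof is incomplete because it silently assumes all relevant fibers are smooth when the hypothesis does not guarantee that.
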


\begin{proof}
By Corollary~\ref{cor:cubic-surface-finite-union-of-fibers-contains-periodic}, there are only finitely many fibers of the linear fibration containing all periodic points. There are two types of such fibers: the first is a singular fiber containing only one singular point, which is fixed under $t_xt_y$ (the rest of the points on the fiber are non-periodic due to the assumption in the theorem). The second is a smooth fiber that is periodic. The first type is surely finitely generated, and the second is finitely generated by the Mordell--Weil theorem for elliptic curves (see Silverman \cite[Chapter VIII]{book:silverman2009}). Thus we can choose a finite number of generators on each periodic fiber, take the union with all the singular points on the singular fibers, and we are done.
\end{proof}

\begin{cor}
If $K=\mathbb{Q}$ in Theorem \ref{thm:cubic-finitely-generated}, and the resultants $Res(\Psi_1, \Psi_n)\neq{0}$ for $n=2,3,4,6$, then the set of $\mathbb{Q}$-periodic points of $t_xt_y$ is finitely generated.
\end{cor}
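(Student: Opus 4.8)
The plan is to bootstrap off Theorem~\ref{thm:cubic-finitely-generated} by verifying its hypothesis up to finitely many harmless fibers. By Corollary~\ref{cor:cubic-surface-finite-union-of-fibers-contains-periodic} the $\mathbb{Q}$-periodic points of $t_xt_y$ all lie in the finitely many fibers of $\pi$ over the $\mathbb{Q}$-roots of the division polynomials $\Psi_n$, and each such fiber $C$ is an absolutely irreducible cubic curve (Proposition~\ref{prop:irreducible-cubic-fiber}). A smooth periodic fiber $C/\mathbb{Q}$ contributes a cofinite subset of $C(\mathbb{Q})$, which is finitely generated by the Mordell--Weil theorem, the group law on $C$ being realized by secants and tangents internal to $C$ — precisely the case treated in the proof of Theorem~\ref{thm:cubic-finitely-generated}. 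A singular fiber that is \emph{not} periodic contributes at most its unique singular point, a fixed point of $t_xt_y$ (Corollary~\ref{cor:txty-fixed-iff-singular}). Hence everything comes down to controlling singular fibers that \emph{are} periodic of finite period.

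The key step is to show that the resultant hypotheses forbid a singular periodic fiber defined over $\mathbb{Q}$. Suppose $C=C_a$ with $a\in\mathbb{Q}$ is singular and periodic of period $n$. Since $t_xt_y|_C$ is the translation by $y$ on $C_{ns}$ with $x$ as origin (Proposition~\ref{prop:cubic-plane-section-invariant}, Theorem~\ref{thm:txty-on-cubic-curve}), and $x,y\in C_{ns}(\mathbb{Q})$ by Proposition~\ref{prop:irreducible-cubic-fiber}, periodicity of period $n$ means $y$ has exact order $n$ in the group $C_{ns}(\mathbb{Q})$, with $n\geq 2$ because $x\neq y$. Now $C$ is an absolutely irreducible singular plane cubic over $\mathbb{Q}$ with a smooth rational point, so $C_{ns}$ is, with $x$ as origin, either $\mathbb{G}_a$ (cuspidal case) or a one-dimensional $\mathbb{Q}$-form of $\mathbb{G}_m$ (nodal case); the cuspidal case is torsion-free in characteristic $0$, so $n\geq 2$ forces the nodal case. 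An elementary enumeration of the torsion of the forms of $\mathbb{G}_m$ over $\mathbb{Q}$ — the split torus $\mathbb{Q}^\ast$, giving orders $\{1,2\}$, and the norm-one tori of quadratic fields, whose torsion consists of the norm-one roots of unity of that field (order $4$ from $\mathbb{Q}(i)$, orders $3,6$ from $\mathbb{Q}(\sqrt{-3})$, nothing else) — shows that $n\in\{2,3,4,6\}$. But then $a$ is a common root of $\Psi_1$ (singular fibers correspond to roots of $\Psi_1$, Proposition~\ref{prop:psi1-psi2}(a)) and of $\Psi_n$ (a fiber periodic of period $n\neq 2$ is a root of $\Psi_n$, and of $\Psi_2$ for $n=2$), so $\mathrm{Res}(\Psi_1,\Psi_n)=0$, contradicting the hypothesis.

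It then remains to assemble the conclusion. Every singular periodic fiber must therefore lie over a point of $\mathbb{P}^1(\bar{\mathbb{Q}})\setminus\mathbb{P}^1(\mathbb{Q})$, and such a fiber meets $S(\mathbb{Q})$ only in the base points $\{x,y,z\}=\mathcal{L}(x,y)\cap S$, hence has only finitely many $\mathbb{Q}$-periodic points. Thus the set of $\mathbb{Q}$-periodic points of $t_xt_y$ is the union of a cofinite subset of $C(\mathbb{Q})$ for each of the finitely many smooth periodic fibers $C$, together with finitely many singular or fixed points on the remaining fibers. Taking $\mathcal{S}$ to consist of a finite set of secant-and-tangent generators of $C(\mathbb{Q})$ for each smooth periodic fiber, together with all the $\mathbb{Q}$-periodic points among the finitely many exceptional singular points, produces a finite set whose span — with secants and tangents taken inside the fibers, as in Theorem~\ref{thm:cubic-finitely-generated} — is exactly the set of $\mathbb{Q}$-periodic points.

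The main obstacle I expect is twofold: carrying out the torsion classification of the $\mathbb{Q}$-forms of $\mathbb{G}_m$ cleanly (this is where the numbers $2,3,4,6$ come from), and, more delicately, checking that ``singular periodic fiber over $\mathbb{Q}$'' really corresponds to a genuine common root of the \emph{honestly defined} polynomials $\Psi_1$ and $\Psi_n$ — i.e.\ that the exceptional set $\mathcal{B}$ in the construction of the $\Psi_n$ does not spoil the correspondences ``root of $\Psi_1\leftrightarrow$ singular fiber'' and ``root of $\Psi_n\leftrightarrow$ period dividing $n$'' at bad values of $t$. Once these points are settled, the resultant hypothesis does its work and the remainder is bookkeeping layered on top of Theorem~\ref{thm:cubic-finitely-generated}.
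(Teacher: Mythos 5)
Your proof is correct and follows the same approach as the paper: reduce to ruling out singular periodic fibers, classify the possible torsion orders on $C_{ns}(\mathbb{Q})$ for a singular plane cubic (torsion-free additive group in the cuspidal case; $\mathbb{Q}$-forms of $\mathbb{G}_m$ with torsion of order in $\{1,2,3,4,6\}$ in the nodal case), and invoke the resultant hypothesis to forbid a common root of $\Psi_1$ and $\Psi_n$. Your extra paragraph about singular periodic fibers over $\bar{\mathbb{Q}}\setminus\mathbb{Q}$ is a welcome clarification but is already implicitly handled, since the finitely many fibers singled out by Corollary~\ref{cor:cubic-surface-finite-union-of-fibers-contains-periodic} all lie over $K$-roots, so the hypothesis of Theorem~\ref{thm:cubic-finitely-generated} only ever needs to be verified for $K$-fibers.
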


\begin{proof} The condition 
$Res(\Psi_1, \Psi_n)\neq{0}$ means there are no fibers that are both singular and of period $n$ (by Proposition~\ref{prop:psi1-psi2}). If a singular fiber $C$ of the linear fibration is periodic, then $y$ is of finite order in the group $(C_{ns}, x)$. Singular cubics over $\mathbb{Q}$ have group structure on $C_{ns}$ isomorphic to either $\mathbb{Q}^+$ or a subgroup of $L^*$, where $L$ is a quadratic extension of $\mathbb{Q}$ (see  Silverman \cite[Exercise III.3.5]{book:silverman2009}). In either case there are no non-trivial torsion elements of an order not in $\{2,3,4,6\}$.
\end{proof}

\begin{rem} We show in Example \ref{ex:cubic-singular-periodic-fiber} that there exist dynamical systems on smooth cubic surfaces where $Res(\Psi_1, \Psi_2)=0$. Of course, this does not mean that the consequence of the corollary is untrue for such examples (i.e.\ the set of periodic points might still be finitely generated).
\end{rem}

\section{Examples}

In this section we present some examples, illustrating the ideas and theorems presented in the paper. Finding the division polynomials associated with a good pair $x,y$ is unreasonable without the use of a computer. We have implemented the necessary \magma~\cite{article:magma} functions to calculate the division polynomials associated with the birational map $t_xt_y$. The code package also contains functions that check the minimal residual periods of $t_xt_y$ for primes up to a given bound (this can provide an indication as to whether a given example is SRP or not). The code can be downloaded at the author's website \url{http://www.wishcow.com}. The results of our package can be verified using the \magma\ code package provided by Brown and Ryder \cite{article:brown-ryder2010}, which can generate Geiser involutions for a smooth cubic surface. We start with an example of a strongly residually periodic dynamical system on a smooth cubic surface:

\begin{exmp} \label{ex:srp-cubic} Let $S/\mathbb{Q}$ be the smooth cubic surface defined by the equation
\begin{equation} \label{equation:srp-cubic-equation}
S: YW^2 + Y^2Z-X^3-4Z^3 = 0.
\end{equation}
The points $x=[0:1:0:0], y=[0:-2:1:0]$ are a good pair on $S$.
We denote by $\varphi$ the birational automorphism $t_xt_y$ on $S$.
The forbidden set is defined as in Section \ref{section:SRP-cubic-surface}. We will show that the dynamical system $D=(S/\mathbb{Q}, \varphi, F)$ is $SRP(3)$, i.e.\ there exists a number $M$ such that for any prime $p>M$, the dynamical system has periodic points of period $3$ modulo $p$. 

Intersecting the hyperplane $\{W=0\}$ with $S$ gives the cubic curve $C: Y^2Z-X^3-4Z^3,$ which is a smooth cubic. This hyperplane goes through the points $x$ and $y$, and therefore the curve $C$ is invariant under the map $t_xt_y$ (see Proposition~\ref{prop:cubic-plane-section-invariant}). Choosing the identity of the elliptic group structure to be $x = [0:1:0]$ (or more accurately, the point induced by $x$), we get an elliptic curve with Weierstrass form $v^2=u^3+4.$ Consulting the Cremona database of elliptic curves \cite{article:cremona2006}, we see that $E=(C, x)$ is an elliptic curve with Mordell--Weil rank $0$, and the torsion subgroup is isomorphic to $\mathbb{Z}/3\mathbb{Z}$. The three rational points are $x=[0:1:0], y=[0:-2:1]$ and $z=[0:2:1]$. Since the group is $\mathbb{Z}/3\mathbb{Z}$, the point $y$ is of order $3$ on the elliptic curve. We have already seen (Proposition \ref{prop:periodic-point-on-fiber-txty}) that the order of $y$ is the same as the order of $t_xt_y$ in the group $Bir(E)$. Thus we get that even though $t_xt_y$ is of infinite order in $Bir(S)$, it is of finite order $3$ when restricted to the hyperplane section. By Proposition \ref{prop:srp-on-cubic-surface-cond1} we get that the dynamical system $D$ is SRP(3).
\end{exmp}

Let us describe a method for constructing examples with desired dynamical properties (e.g.\ SRP). We choose an elliptic curve $E: v^2=u^3+Au+B$ with particular properties, for instance with a finite number of rational points as was done in Example \ref{ex:srp-cubic}. We then choose two points $x,y\in{E(K)}$ to play the roles of the points $x,y$ on the surface $S$, and search for a smooth surface $S$ containing this curve. We can do this by running over surfaces of the form $$S: W\cdot{H(X,Y,Z,W)}-Y^2Z-X^3-AXZ^2-BZ^3=0$$ where $H$ is a quadratic form, and searching for ones that are smooth and in which $x,y$ is a good pair. Then this surface will have $C=\{W=0\}\cap{S}$ as a fiber in the linear fibration induced by $x,y$, and this fiber $C$ is exactly our curve $E$.

\begin{exmp} \label{ex:srp-cubic-contd}
We continue Example \ref{ex:srp-cubic}, by proving that the dynamical system $D$ has no periodic points over $\mathbb{Q}$. We use this example to illustrate the method of using the division polynomials of the linear fibration induced by the good pair $x,y$. To find the cubic pencil of the linear fibration of $S$ induced by $x,y$, we set $W=tX$ in equation (\ref{equation:srp-cubic-equation}). This is because the line passing through $x,y$ is $\mathcal{L}(x,y)=\{W=0,X=0\}$. We get the cubic pencil $C: t^2X^2Y+Y^2Z-X^3-4Z^3=0.$

The fiber at infinity that we have removed is $X=0$, which is the curve $C_\infty :YW^2 + Y^2Z-4Z^3=0.$ We bring this curve to Weierstrass form and get $E_\infty: v^2 = u^3 - 4u.$ This curve can be checked in the Cremona database \cite{article:cremona2006} to have Mordell--Weil rank $0$, and torsion subgroup $\mathbb{Z}/2\mathbb{Z} \times \mathbb{Z}/2\mathbb{Z}$. All four rational points on $C_\infty$ can be checked to be in $\mathcal{Z}_\infty(t_xt_y)$ (and therefore are not periodic and are in the forbidden set). The point $y$ has order $2$ in the group $E_\infty(\mathbb{Q})$, so that by Proposition \ref{prop:srp-on-cubic-surface-cond1} the dynamical system $D$ is in fact $SRP(2)$.

We can check that the Weierstrass form of the cubic pencil $C$ is $E: v^2=u^3-4t^4u+4.$ 
The discriminant of $E$ is 
$\Delta(t) = 4096t^{12}-6912.$
The zeros of the discriminant, as a polynomial in $t$, give us $12$ distinct singular fibers of the linear fibration, all of which are nodal cubics (the coefficient of $u$ is nonzero at all roots of $\Delta(t)$, see Silverman \cite[Proposition~III.1.4]{book:silverman2009}). None of the roots of $\Delta(t)$ are rational, so there are no fixed points of $t_xt_y$ defined over $\mathbb{Q}$ (See Corollary \ref{cor:txty-fixed-iff-singular}). 

The dynamical system is $SRP(1)$ only if the polynomial $\Delta$ has roots modulo all but finitely many primes $p$. We show this is not true. We can check using \magma~\cite{article:magma} that the Galois group of the polynomial $\Delta$ has an element of order $12$. Then by the Frobenius density theorem (see Lenstra and Stevenhagen \cite[page 32]{article:lenstra-stevenhagen1996}), there are infinitely many primes $p$ for which the polynomial $\Delta$ remains irreducible when reduced modulo $p$, so that residual periodicity cannot be explained by the fixed points.

To get a fiber of the linear fibration that is periodic of period $2$, we need the $v$ coordinate of the point $y$ to be $0$ when evaluated at $t$. However, the image of $y$ in $E$ is $[0:2:1]$, so we get that outside of the fiber at $\infty$ there are no fibers of period $2$.

We can calculate the division polynomials $\Psi_n, n\geq{3}$ for $t_xt_y$. To find them we take the division polynomials $\psi_n$ for $E$, and evaluate them at $v=0$, since $y=[0:2:1]$. We only need the polynomials $\Psi_n(t)$ for $n\leq{12}, n\neq{11}$ (See Proposition \ref{prop:bounded-number-of-periodic-fibers} for why we can skip $\Psi_{11}$, and stop at $\Psi_{12}$). The polynomials are quite large so we do not list them here.
A quick check shows that none of these polynomials have roots in $\mathbb{Q}$ outside of $t=0$, which is the fiber of period $3$. This means that there are no fibers of finite period $n\geq{3}$.  As we have checked all possibilities, we have proved that $t_xt_y$ has no periodic points over $\mathbb{Q}$.
\end{exmp}

\begin{exmp}\label{ex:cubic-infinite-rational-periodic-points}
We show an example of a dynamical system on a smooth cubic surface that has infinitely many periodic points.
Let $S/\mathbb{Q}$ be the smooth cubic surface defined by the equation
\begin{equation}
S: X^3 - 3024XZ^2 - Y^2Z - YW^2 + 81216Z^3 = 0.
\end{equation}
The points $x=[0,1,0,0], y=[12,216,1,0]$ form a good pair on $S$. The hyperplane section $C = \{W=0\}\cap{S}$ is a fiber of the linear fibration induced by $x,y$. This cubic curve has the Weierstrass form $E: v^2=u^3-3024u+81216.$
This curve can be checked in the Cremona database \cite{article:cremona2006} to have Mordell--Weil rank $1$, and $y=(12,216)$ has order $3$ in the elliptic curve $E$. This means that the fiber $C$ is periodic of period $3$ under $t_xt_y$, which proves there are infinitely many $\mathbb{Q}$-periodic points for $t_xt_y$ on $S$. 
\end{exmp}

\begin{exmp} \label{ex:cubic-singular-periodic-fiber}
We show an example of a dynamical system on a smooth cubic surface with a singular fiber in the linear fibration containing infinitely many periodic points. This example demonstrates that the condition in Theorem~\ref{thm:cubic-finitely-generated} is not redundant (in the sense that such systems exist, not that the condition is necessary).
Let $S/\mathbb{Q}$ be the smooth cubic surface defined by the equation
\begin{equation}
S: X^3 + X^2Z - XYW - Y^2Z - Z^2W - W^3 = 0.
\end{equation}
The points $x=[0,1,0,0], y=[-1,0,1,0]$ form a good pair on $S$. The hyperplane section $C = \{W=0\}\cap{S}$ is a fiber of the linear fibration induced by $x,y$. This cubic curve has the Weierstrass form $E: v^2=u^3+u^2,$ which is the classic nodal cubic. The point $y=(-1,0)$ on $E$ is the unique point of order $2$ of this curve. This means that $S$ has a singular curve of period $2$ under $t_xt_y$. 
\end{exmp}


\def\cprime{$'$}
\providecommand{\bysame}{\leavevmode\hbox to3em{\hrulefill}\thinspace}
\providecommand{\MR}{\relax\ifhmode\unskip\space\fi MR }
\providecommand{\MRhref}[2]{%
  \href{http://www.ams.org/mathscinet-getitem?mr=#1}{#2}
}
\providecommand{\href}[2]{#2}

\end{document}